\author[C.-M.~Marle]{Charles-Michel Marle}
\address{Honorary Professor, retired from \hbox{Universit\'e Pierre et Marie Curie} (today Sorbonne Universit\'e)}
\address{Home address: 27, avenue du 11 novembre 1918 \\92190 Meudon\\France}
\email{cmm1934@orange.fr}
\email{charles-michel.marle@math.cnrs.fr}
\urladdr{http://marle.perso.math.cnrs.fr/}
\title[Gibbs states for systems with symmetries]{On Gibbs states of mechanical systems with symmetries}
\begin{document}
\newenvironment{defis}{\begin{enonce}[definition]{Definitions}}{\end{enonce}}
\newenvironment{remas}{\begin{enonce}[remark]{Remarks}}{\end{enonce}}

\frontmatter
\begin{abstract}
Gibbs states for the Hamiltonian action of a Lie group on a symplectic manifold 
were studied, and their possible applications in Physics and Cosmology were 
considered, by the French mathematician and physicist Jean-Marie Souriau. They 
are presented here with detailed proofs of all the stated results. Using an 
adaptation of the cross product for pseudo-Euclidean three-dimensional vector 
spaces, we present several examples of such Gibbs states, together with the 
associated thermodynamic functions, for various two-dimensional symplectic 
manifolds, including the pseudo-spheres, the Poincaré disk 
and the Poincaré half-plane. 
\end{abstract}

\begin{altabstract}
Les états de Gibbs sur une variété symplectique associés à l'action 
hamiltonienne d'un groupe de Lie sur cette variéré ont été étudiés par le 
mathématicien et physicien Jean-Marie Souriau, qui en a aussi considéré des 
applications en Physique et en Cosmologie. Ils sont décrits ici avec la preuve 
détaillée de tous les résultats présentés. Grâce à une adaptation du produit 
vectoriel aux espaces vectoriels pseudo-euclidiens de dimension $3$, plusieurs 
exemples de tels états de Gibbs sont déterminés, ainsi que les fonctions
thermodynamiques qui leur sont associées, pour diverses variétés symplectiques 
de dimension 2, notamment les pseudo-sphères, le disque de Poincaré et 
le demi-plan de Poincaré.
\end{altabstract}

\subjclass{53D05, 53D20, 53D17, 82B03, 82B30}
\keywords{Symplectic and Poisson manifolds, Liouville measure, Hamiltonian systems, 
moment maps, Gibbs states, thermodynamic equilibrium, Hodge star operator, 
Möbius transformations, Poincaré disk, Poincaré half-plane}

\altkeywords{Variétés symplectiques et de Poisson, mesure de Liouville, systèmes hamiltoniens, 
applications moment, états de Gibbs, équilibre thermodynamique, opérateur étoite de Hodge, 
transformations de Möbius, disque de Poincaré, demi-plan de Poincaré}
\thanks{This work was not supported by any public or private subvention other than the author's property}

\dedicatory{In memory of  the French mathematician and physicist Jean-Marie Souriau (1922--2012)}
\maketitle

%Personal macros
%
\newcommand{\field}[1]{\mathbb{#1}}
\newcommand{\CC}{\field{C}}
\newcommand{\DD}{\field{D}}
\newcommand{\FF}{\field{F}}
\newcommand{\HH}{\field{H}}
\newcommand{\KK}{\field{K}}
\newcommand{\NN}{\field{N}}
\newcommand{\PP}{\field{P}}
\newcommand{\QQ}{\field{Q}}
\newcommand{\RR}{\field{R}}
\newcommand{\ZZ}{\field{Z}}
\newcommand{\gras}[1]{\bf{#1}}
\newcommand{\F}{\gras{F}}

\def\id{\mathop{\rm id}\nolimits}
\def\fdet{\mathop{\hbox{d\'et}}\nolimits}
\def\Ad{\mathop{\rm Ad}\nolimits}
\def\ad{\mathop{\rm ad}\nolimits}
\def\orth{\mathop{\rm orth}\nolimits}
\def\toto{\mathop{\rightrightarrows}\limits}
\def\ch{\mathop{\rm cosh}\nolimits}
\def\sh{\mathop{\rm sinh}\nolimits}
\def\th{\mathop{\rm tanh}\nolimits}
\def\coth{\mathop{\rm coth}\nolimits}
\def\div{\mathop{\rm div}\nolimits}
\def\rot{\mathop{\rm rot}\nolimits}
\def\rang{\mathop{\rm rang}\nolimits}
\def\vect#1{\overrightarrow{\mathstrut#1}}
\def\d{\mathrm{d}}
\def\mathi{\mathrm{i}}
\def\scal{\mathop{\rm scal}\nolimits}
\def\GL{\mathop{\mathrm{GL}}\nolimits}
\def\gl{\mathop{\mathfrak{gl}}\nolimits}
\def\SO{\mathop{\mathrm{SO}}\nolimits}
\def\SL{\mathop{\mathrm{SL}}\nolimits}
\def\O{\mathop{\mathrm O}\nolimits}
\def\SU{\mathop{\mathrm{SU}}\nolimits}
\def\U{\mathop{\mathrm U}\nolimits}
\def\so{\mathop{\mathfrak{so}}\nolimits}
\def\su{\mathop{\mathfrak{su}}\nolimits}
\def\im{\mathop{\mathrm{im}}\nolimits}
\def\Sp{\mathop{\mathrm{Sp}}\nolimits}
\def\sp{\mathop{\mathfrak{sp}}\nolimits}
\def\sl{\mathop{\mathfrak{sl}}\nolimits}
\def\lin{\mathop{\mathrm{lin}}\nolimits}
\def\Mot{\mathop{\mathrm{Mot}}\nolimits}
\def\date{\mathop{\mathrm{date}}\nolimits}
\setcounter{tocdepth}{2} 
\tableofcontents

\mainmatter

% \linenumbers

\section{Introduction}
\label{sec:1}
The French mathematician and physicist Jean-Marie~Souriau (1922--2012) considered, 
first in \cite{Souriau1966}, then in his book 
\cite{Souriau1969}, Gibbs states on a symplectic manifold built with the moment map
of the Hamiltonian action of a Lie group, and the associated thermodynamic functions. 
In several later papers \cite{Souriau1974, Souriau1975, Souriau1984}, he developed these
concepts and considered their possible applications in Physics and in Cosmology.
A partial translation in English of these three papers, made by Frédéric Barbaresco,
is available at \cite{Barbaresco2020b}.
\par\smallskip

Recently, under the name \emph{Souriau's Lie groups thermodynamics}, these
Gibbs states and the associated thermodynamic functions were considered by 
several scientists, notably by Frédéric Barbaresco, for their possible 
applications in today very fashionable scientific topics, such as geometric 
information theory, deep learning and  machine learning  
\cite{Barbaresco2014, Barbaresco2015, Barbaresco2016, Barbaresco2020, 
BarbarescoGayBalmaz2020, deSaxce2015, deSaxce2016}. Although including these 
topics in a reasearch program seems to be, nowadays, a good way to obtain a
public funding, I am not going to speak about them, since they are far from
my field of knowledge. I will rather stay on Gibbs states and their 
possible applications in classical and relativistic Mechanics.
\par\smallskip

Long before the works of Souriau, Gibbs states associated to a Hamiltonian Lie 
group action were considered by  the American scientist Josiah Willard Gibbs 
(1839--1903). In his book \cite{Gibbs1881} published in 1902, he clearly 
described Gibbs states in which the components of the total angular momentum
(which are the components of the moment map of the action of the group of rotations
on the phase space of the considered system) appear, on the same footing as the 
Hamiltonian. He even considered  Gibbs states involving conserved quantities
more general than those associated with the Hamiltonian action of a Lie group. 
In this domain, Souriau's main merits do not lie, in my opinion, in the 
consideration of Gibbs states for the Hamiltonian action of a Lie group, 
a not so new idea, but rather in the use of the \emph{manifold of motions} of 
a Hamiltonian system instead of the use of its \emph{phase space},  and his 
introduction, under the name of \emph{Maxwell's principle}, 
of the idea that a symplectic structure should exist on the manifold of motions 
of systems encountered as well in classical Mechanics as in relativistic Physics. 
He therefore considered Gibbs states for Hamiltonian actions, on a 
symplectic manifold, of various Lie groups, including the Poincaré group, often 
considered in Physics as a group of symmetries for isolated relativistic systems.
He was well aware of the fact that Gibbs states for the Hamiltonian action
of the full considered groups may not exsist, which led him to carefully discuss
the physical meaning and the possible applications of Gibbs states associated 
to the action of some of their subgroups.

\par\smallskip

Section~\ref{sec:2} begins with a reminder about some concepts used in statistical 
mechanics, notably the concepts of \emph{statistical states} and of \emph{entropy},
and about the use of Hamiltonian vector fields in Mechanics.    
Gibbs states in the special case in which the only conserved quantity considered 
is the Hamiltonian, and the associated thermodynamic functions, are then briefly discussed. 
Their physial interpretation as states of thermodynamic equilibrium is discussed. The
relation of the real parameter $\beta$ used to index statistical states with 
the temperature is explained.
\par\smallskip

The notion of \emph{manifold of motions} of a Hamiltonian dynamical system is
presented in Section~\ref{sec:3}. Then Gibbs states for the Hamiltonian action of a Lie group
on a symplectic manifold are discussed, with full proofs of all the stated results. 
Most of these proofs can be found in Souriau's book \cite{Souriau1969}, which some readers 
may find difficult to access. A good English translation of this book is available,
which faithfully preserves the language and the notations of the author. 
\par\smallskip

In section~\ref{sec:4}, some examples of Gibbs states
are presented, together with the associated  thermodynamic functions. 
The main tool used in this section is an adaptation of the well known \emph{cross product}
for three-dimensional oriented, pseudo-Euclidean vector spaces. Remarkably, according to
\cite{CrossProductWikipedia}, the cross product of two elements of a three-dimensional, 
oriented, Euclidean vector space appeared for the first time in the lecture notes 
\emph{Elements of Vector Analysis} \cite{Gibbs1881}, privately written in 1881 
for students in physics by Gibbs, one of the most important founders of 
statistical mechanics.
\par\smallskip

The readers will find at the beginning of sections \ref{sec:3} and \ref{sec:4}
a more detailed presentation of the contents of these sections.

\section{Some concepts used in statistical mechanics}
\label{sec:2}

\subsection{The birth of statistical mechanics}
\label{subssec:2.1}

In his book \emph{Hydrodynamica} published in 1738, Daniel Bernoulli (1700--1782)
considered fluids (gases as well as liquids) as made of a very large number of moving
particles. He explained that the pressure in the fluid is the result of collisions of 
the moving particles against the walls of the vessel in which it is contained, or against
the probe which measures the pressure. 
\par\smallskip

Daniel Bernoulli's idea remained ignored by
most scientists for more than one hundred years. It is only in the second half of 
the XIX-th century that some scientists, notably Rudolf Clausius (1822--1888), 
James Clerk Maxwell (1831--1879) and Ludwig Eduardo Boltzmann(1844--1906), considered 
Bernoulli's idea as reasonable. As soon as 1857, Clausius began the elaboration 
of a \emph{kinetic theory of gases} aiming at the explanation of macroscopic properties
of gases (such as temperature, pressure and other thermodynamic properties), 
starting from the equations which govern the motions of the moving particles. 
Around 1860, Maxwell determined the probability distribution of the moving particles 
velocities in a gas in thermodynamic equilibrium. For a gas not in thermodynamic 
equilibrium, an evolution equation for this probability distribution was obtained 
by Boltzmann in 1872. Using probabilistic arguments about the way in which 
collisions of particles can occur, Boltzmann introduced a quantity, denoted by 
$H$\footnote{In Boltzmann's mind, this letter was probably the Greek boldface 
letter \emph{\^Eta} rather than the Latin letter H.} which, as a functon of time, 
always montonically decrases. Boltzmann's H function is now identified with the 
opposite of the \emph{entropy} of the gas. On this basis, Josiah Willard Gibbs 
(1839--1903) laid the foundations of a new branch of theoretical physics, which 
he called \emph{statistical mechanics} \cite{Gibbs1902}.
\par\smallskip

In the first half of the XX-th century, scientists understood that the motions of 
molecules in a material body do not perfectly obey Newton's laws of classical 
mechanics, and that the laws of quantum mechanics should be used instead. The 
basic concepts of statistical mechanics established by Gibbs were general enough 
to remain valid in this new framework, and to be used for liquids or solids as 
well as for gases.

\subsection{Statistical states and entropy}
\label{subsec:2.2} 
In this subsection, after a reminder of some well known facts about the use of 
Hamiltonian vector fields in classical mechanics and about symplectic manifolds, 
the important concept of a \emph{statistical state} is presented
and the definition of its \emph{entropy} is given.

\begin{enonce}[definition]
{The use of Hamiltonian vector fields in classical mechanics}
\label{HamiltonianVectorFieldsInMechanics}
  
Let us recall how the evolution with time of the state of a material body is 
mathematically described by a dynamical system, in the framework of classical mechanics. 
The physical time $\mathcal T$ is a one-dimensional real, oriented  affine space, identified
with $\RR$ once a unit and an origin of time are chosen. The set of all 
possible kinematic states of the body is a symplectic manifold $(M,\omega)$, very 
often a cotangent bundle, traditionnaly called the \emph{phase space} of the 
system. For an isolated system, a smooth real-valued function $H$, 
defined on $M$, called a \emph{Hamiltonian} for the system, determines all its 
possible evolutions with time. Let indeed $X_H$ be the unique smooth vector field, 
defined on $M$, which satisfies the equality
 $$\mathi(X_H)\omega=-\d H\,.\eqno(*)
 $$
It is called the \emph{Hamiltonian vector field} admitting the function $H$ as a Hamiltonian.
Any possible evolution with time of the system is described by a smooth curve 
$t\mapsto \varphi(t)$, defined on an open interval in $\RR$, with values in $M$, which 
is a maximal integral curve of the differential equation, called 
\emph{Hamilton's equation}, in honour of  the Irish mathematician William Rowan Hamilton
(1805--1865), 
 $$\frac{\d \varphi(t)}{\d t}= X_H\bigl(\varphi(t)\bigr)\,.\eqno(**)
 $$
The Hamiltonian $H$ is a \emph{first integral} of this differential equation~: it means that 
for each smooth curve $t\mapsto \varphi(t)$, solution of this differential equation, 
$H\bigl(\varphi(t)\bigr)$ is a constant.
\par\smallskip
    
More generally, when the system is not isolated, its Hamiltonian $H$ is a smooth function
defined on $\RR\times M$ (or on an open subset of $\RR\times M$) since it may depend
on time. The Hamiltonian vector field $X_H$ which admits such a function as Hamiltonian
is still, for each time $t\in \RR$,  determined by equation $(*)$ above, in the righ-hand side
of which the differential $\d H$ must be calculated, for each $t\in\RR$, as its \emph{partial differential}
with respect to the variable $x\in M$, the time $t\in \RR$ being considered as fixed.
Therefore $X_H$ is a \emph{time-dependent vector field} on $M$, \emph{i.e.}, a smooth
map, defined on some open subset of $\RR\times M$, with values in the tangent bundle $TM$, such that for
each fixed $t\in \RR$, the map $x\mapsto X_H(t,x)$ is an usual smooth vector field defined
on some open subset of $M$. Any possible evolution with time of the system is still described by 
a smooth curve $t\mapsto \varphi(t)$, which is a maximal integral curve of the differential equation $(**)$ above, 
wich now must be written as  
 $$\frac{\d \varphi(t)}{\d t}= X_H\bigl(t,\varphi(t)\bigr)\,,\eqno({*}{*}{*})
 $$ 
in order to indicate that $X_H$ may depend on $t\in\RR$ as well as on 
$\varphi(t)\in M$. In this case the Hamiltonian $H$ is no more a first integral 
of this differential equation.
\end{enonce}

\begin{enonce}[definition]{The Liouville measure on a symplectic manifold}
\label{LiouvilleMeasure}
Let $(M,\omega)$ be a $2n$-dimensional symplectic manifold. Let $(U,\varphi)$ be
an admissible chart of $M$. For each $x\in M$, we set
 $$\varphi(x)=(q^1,\ldots,q^n,\allowbreak p_1,\ldots,p_n)\in\varphi(U)\subset \RR^{2n}\,.
 $$
The chart $(U,\varphi)$ is said to be \emph{canonical}, or to be a
\emph{Darboux chart}, if the local expression of $\omega$ in $U$ is
 $$\omega=\sum_{i=1}^n\d p_i\wedge\d q^i\,.
 $$
The local coordinates $q^1,\ldots,q^n,p_1,\ldots,p_n$ in this chart are called 
\emph{canonical coordinates} or \emph{Darboux coordinates}. The famous 
\emph{Darboux theorem}, so named in honour of the French mathematician
Gaston Darboux (1842--1917), asserts that any point in $M$ is an element of the domain of a
canonical chart. By using this theorem, one can prove the existence of a unique positive measure
on the Borel $\sigma$-algebra\footnote{The $\sigma$-algebra of a topological space $M$ 
is the smallest family of subsets of $M$ which contains all open subsets and is stable
by complementation and by intersections of countable subfamilies. It is so named 
in honour of the French mathematician \'Emile Borel (1871--1956).}
of $M$, called the \emph{Liouville measure}, in hounour of the French mathematician
Joeph Liouville (1809--1882) and denoted by $\lambda_\omega$,
such that for any measurable subset $A$ of $M$ contained in the domain $U$ of a canonical chart
$(U,\varphi)$ of $M$, such that $\varphi(A)$ is a bounded subset of $\RR^{2n}$,
 $$\lambda_\omega(A)=\int_{\varphi(A)}\d q^1\ldots\d q^n\d p_1\ldots\d p_n\,.
 $$ 
The Liouville measure is invariant by symplectomorphisms, which means that
its  direct image $\Phi_*\lambda_\omega$ by any symplectomorphism $\Phi:M\to M$
is equal to $\lambda_\omega$.
\end{enonce}

\begin{defis}
\label{DefiStatisticalState} 
Let $(M,\omega)$ be a symplectic manifold and $\lambda_\omega$  its Liouville measure.

\par\smallskip\noindent
{\rm 1.}\quad A \emph{statistical state} on $M$ is a probability measure 
$\mu$ on the Borel $\sigma$-algebra of $M$. The statistical state $\mu$ is said 
to be \emph{continuous} (respectively, \emph{smooth}) when it can be written as
$\mu=\rho\lambda_\omega$, where $\rho$ is a continuous function (respectively, a 
smooth function) defined on $M$. The function $\rho$ is then said to be the 
\emph{probability density} (or simply the \emph{density})
of the statistical state $\mu$ with respect to $\lambda_\omega$.

\par\smallskip\noindent
{\rm 2.}\quad Let $\mu$ be a statistical state on $M$ and $f$ be a function, 
defined on $M$, which takes its values in $\RR$ or in a 
finite-dimensional vector space.
When $f$ is integrable on $M$ with respect to the measure $\mu$, its integral is called
the \emph{mean value} of $f$ in the statistical state $\mu$, and denoted by
${\mathcal E}_\mu(f)$. When the statistical state $\mu$ is continuous,
with the continuous function $\rho$ as probability density with respect to $\lambda_\omega$,
the mean value of $f$ in the statistical state $\mu$ is, by a slight abuse of notations,
denoted by ${\mathcal E}_\rho(f)$. Its expression is
 $${\mathcal E}_\rho(f)= \int_Mf(x)\rho(x)\,\lambda_\omega(\d x)\,.
 $$
\end{defis}

\begin{enonce}[definition]{Comments about the use of statistical states}
\label{CommentsStatisticalStates}
When the considered dynamical system, determined by the Hamiltonian vector field
$X_H$, is made of a large number $N$ of moving particles, the dimension of the 
symplectic manifold $(M,\omega)$ which represents 
the set of all its possible kinematic states is very large~: at least $6N$, 
and even more when the particles are not treated as material points. A perfect 
knowledge of each element of $M$ is not possible, which explains the use of 
statistical states in classical Mechanics. In this framework, when the state of the
considered system at a given time $t_0$ is mathematically described by a 
statistical state $\mu$, it means that instead of looking at the evolution in time
of a unique system whose kinematical state at time $t_0$ is a given element
$x_0\in M$, one is going to look at the evolution in time of a whole family of
similar systems. The evolution with time of each of these systems is described by 
the differential equation determined by $X_H$, and its kinematical states at time 
$t_0$ can be any point in the support\footnote{The support of a measure $\mu$ 
defined on the Borel $\sigma$-algebra of a topological state $M$ is the 
closed subset of $M$, complementary to the open subset made by points contained 
in an open subset $U$ of $M$ such that $\mu(U)=0$.} of $\mu$.
\par\smallskip

When, instead of classical mechanics, quantum mechanics is used for the 
mathematical description of the evolution with time of the state of a physical 
system, the use of statistical states is not due to an imperfect knowledge of the 
initial state of the system~: it is mandatory. Informations about the evolution 
with time of the state of a system given by quantum mechanics are indeed
always probabilistic. By nature, quantum mechanics is always statistical.
\end{enonce}

\begin{enonce}[definition]{Examples\phantom{bla bla azertyuiopqsdfghjklmwxcvbn}}
\label{ExamplesStatisticalStates}

\noindent
{\rm 1.}\quad Let $x_1\,,\, x_2\,,\ldots,\,x_N$ be $N$
pairwise distinct points in $M$, and $k_1\,,\, k_2\,,\ldots,\,k_N$ be real numbers
satisfying $k_i>0$ for all $i\in\{1,\ldots,N\}$ and $\sum_{i=1}^N k_i=1$.
For each $i\in\{1,\ldots,N\}$, let $\delta_{x_i}$ be the Dirac measure at $x_i$, 
whose value $\delta_{x_i}(A)$ for a measurable subset $A$ of $M$ is $0$ when 
$x_i\notin A$ and $1$ when $x_i\in A$. The measure 
 $\mu=\sum_{i=1}^N k_i\delta_{x_i}
 $ 
is a statistical state, which is neither continuous, nor smooth. The mean value 
of a function $f$ in the statistical state $\mu$ is $\sum_{i=1}^Nk_if(x_i)$.
\par\smallskip

For each $i\in\{1,\ldots,N\}$, the measure $\delta_{x_i}$ is a statistical state 
in which the kinematical state of the system is the point $x_i$, with a 
probability $1$. One can say that $\delta_{x_i}$ is a state in the usual sense. 
In the statitical state $\mu$, the kinematical state of the system is a random 
variable which can take each value $x_i$ with the probability $k_i$.
 
\par\smallskip\noindent
{\rm 2.}\quad Still under the same assumptions, for each $i\in\{1,\ldots,N\}$,
let $U_i$ be a neighbourhood of $x_i$ and $\varphi_i$ be a positive valued, smooth 
function, with compact support contained in $U_i$, satisfying the equality
$\int_Mf(x)\lambda_\omega(\d x)=1$. The measure $\nu$ whose probability density with respect 
to the Liouville measure $\lambda_\omega$ is $\rho_\nu=\sum_{i=1}^N k_i\varphi_i$
is a smooth statistical state, which can be considered as a smooth approximation of
the discrete statistical state $\mu$ considered above. Such smooth approximations
of non-smooth statistical states were extensively used by the founder of geostatistics,
the French mathematician and geologist Georges Matheron (1930--2000) \cite{Matheron1967} .   
\end{enonce}

\begin{rema}
Let $\mu$ be a continuous statistical state on the symplectic manifold $(M,\omega)$
and $\rho$ its probability density with respect to the Liouville measure
$\lambda_\omega$. For each measurable subset $A$ of $M$, we have
 $$\mu(A)=\int_A\rho(x)\,\lambda_\omega(\d x)\,,\quad\hbox{so for $A=M$,}\quad 
    \mu(M)=\int_M\rho(x)\,\lambda_\omega(\d x)=1\,. 
 $$
The function $\rho$ therefore takes its values in $\RR^+$ and is integrable 
on $M$ with respect to the Liouville measure.
\end{rema}

\begin{enonce}[definition]{Evolution with time of a statistical state}
\label{EvolutionStatisticalState}
Let $(M,\omega)$ be a symplectic manifold, $H\in C^\infty(M,\RR)$ be a smooth  
Hamiltonian on $M$ which does not depend on time and $X_H$ be the 
associated Hamiltonian vector field on $M$. We denote by $\Phi^{X_H}$ the reduced
flow\footnote{The \emph{full flow}, or in short the \emph{flow}, of a smooth 
vector field $X$, which may depend on time, defined on $\RR\times M$ (or on an open subset of
$\RR\times M$) is the map $\Psi^X$, defined on an open subset of $\RR\times\RR\times M$,
taking its values in $M$, such that for each $t_0\in\RR$ and each $x_0\in M$,
the maximal solution $\varphi$ of the differential equation determined by $X$ which
satisfies $\varphi(t_0)=x_0$ is the map $t\mapsto \Psi^X(t,t_0,x_0)$. When $X$ 
does not depend on time, $\Psi^X(t,t_0,x_0)$ only depends on $t-t_0$ and $x_0$.
So instead of the full flow $\Psi^X$, one can use the \emph{reduced flow} $\Phi^X$, defined
on an open subset of $\RR\times M$ by the equality  $\Phi^X(t,x_0)=\Psi^X(t,0,x_0)$.
One often write $\Phi^X_t(x_0)$ to emphasize the fact that $\Phi^X_t$ is a diffeomorphism
between two open subsets of $M$.} 
of $X_H$. If, at a time $t_0$, the state of the dynamical system described by $X_H$
is a perfectly defined point $x_0\in M$, the state of the system, at any other 
time $t_1$ at which it exists, is the point $x_1=\Phi^{X_H}_{t_1-t_0}(x_0)$.
\par\smallskip

Let us assume that $\mu(t_0)$ is the statistical state of 
such a system at a given time $t_0$. We assume, for simplicity, that $\mu(t_0)$ is 
smooth and we denote by $\rho(t_0)$ its probability density with respect to the 
Liouville measure $\lambda_\omega$. Let $t_1$ be another time at which the 
considered system still exists. The reduced flow $\Phi^{X_H}$ of the Hamiltonian 
vector field $X_H$ is such that $\Phi^{X_H}_{t_1-t_0}$ is a symplectic diffeomorphism
of an open subset of $M$ onto another open subset of this manifold, whose inverse is
$\Phi^{X_H}_{t_0-t_1}$.
The statistical state of the system at time $t_1$ is therefore smooth, with a 
probability density $\rho(t_1)$ with respect to $\lambda_\omega$, related to 
$\rho(t_0)$ by the equation
 $$\rho(t_1)=\rho\bigl(t_0)\circ\Phi^{X_H}_{t_0-t_1}\,.
 $$
In other words, for any $x\in M$, 
 $$\rho(t_1, x)=\rho\bigl(t_0,\Phi^{X_H}_{t_0-t_1}(x)\bigr)\,.
 $$

\end{enonce}

\begin{defi}
\label{EntropyDefinition} 
Let $\rho$ be the probability density, with respect to the Liouville measure
$\lambda_\omega$, of a continuous statistical state on the symplectic manifold $(M,\omega)$.
The \emph{entropy} of this statistical state, denoted by $s(\rho)$, is defined as
follows. With the convention that when $x\in M$ is such that $\rho(x)=0$, we set
$\displaystyle \log\left(\frac{1}{\rho(x)}\right)\rho(x)=0$, we can consider
$\displaystyle x\mapsto\log\left(\frac{1}{\rho(x)}\right)\rho(x)$ as a continuous function 
well defined on $M$, taking its values in $\RR$. When this function
is integrable on $M$ with respect to the Liouville measure $\lambda_\omega$, we set 
 $$s(\rho)=\int_M\log\left(\frac{1}{\rho(x)}\right)\rho(x)\lambda_\omega(\d x)
          =-\int_M\log\bigl(\rho(x)\bigr)\rho(x)\lambda_\omega(\d x)\,.
 $$
Otherwise, we set
 $$s(\rho)=-\infty\,.
 $$  
The map $\rho\mapsto s(\rho)$ so defined on the set of all continuous probability densities
on $M$ is called the \emph{entropy functional}.           
\end{defi}
\goodbreak

\begin{enonce}[definition]{Comments about entropy \phantom{bla bla bla bla bla bla}}
\label{CommentsEntropy}
\noindent
{\rm 1.}\quad The concept of \emph{entropy} is due to Rudolf Clausius, who used it to
formulate precisely the second principle of thermodynamics.

\par\smallskip\noindent
{\rm 2}\quad The entropy of a real system in Physics is always positive. The third law 
of thermodynamics states that the entropy of a system in thermodynamic equilibrium, 
when its state of minimal energy is unique, decreases towards $0$ when its absolute
temperature decreases towards $0$ degree Kelvin. Physicists therefore consider as 
an unacceptable anomaly the fact that the entropy functional can take negative values,
and are scandalized at the sight of $-\infty$ as a possible value of entropy. Indeed,
such a value is in clear conflict with Heisenberg's principle of uncertainty.
The von Neumann entropy\footnote{In quantum statistical mechanics, the von 
Neumann entropy of a state mathematically described by a \emph{density matrix} $\rho$
is the trace of $-\rho\ln\rho$. It was defined and extensively used by the
Hungarian-American universal scientist John von Neumann (1903--1957).}, 
used in quantum statistical mechanics, is always positive, and the
entropy defined in \ref{EntropyDefinition} is only its imperfect classical approximation.

\par\smallskip\noindent
{\rm 3.}\quad   
In his famous paper \cite{Shannon1948}, written during the second world war
and published in 1948,  the American mathematician, electrical engineer and
cryptographer Claude Elwood Shannon (1916--2001) laid the foundations
of information theory. He defined in this paper a concept of \emph{entropy} 
whose \emph{opposite} can be used as measurement of the information contained 
in a message, and considered its evolution when the message is transmitted through 
a telecommunications channel. Curiously enough, by reference to Boltzmann's works,
the notation he used for his entropy is the letter $H$, although he observed that 
his entropy's expression is similar to the expression of the \emph{opposite} 
of Boltzmann's H-function. For a random variable $X$ which can take $N$ 
possible values $x_i$, repectively with the probabilities 
$k_i$\footnote{The notation used by Shannon for the probability of $x_i$ is
$p_i$, $1\leq i\leq N$. Here I use $k_i$ instead to avoid any risk of confusion with 
the Darboux coordinates $p_i$ in a canonical chart of a symplectic manifold.} 
$(1\leq i\leq n)$, the $k_i$ satisfying $k_i\geq 0$ and $\sum_{i=1}^Nk_i=1$, 
Shannon defined its entropy $H(X)$ by stating
 $$H(X)=\sum_{i=1}^N \log\left(\frac{1}{k_i}\right)k_i=-\sum_{i=1}^N(\log k_i)k_i\,,
 $$ 
with the usual convention $0\log 0=0$. In Appendix 2 of his above cited paper, 
page 49, he proved that up to multiplication by a strictly positive constant, 
his entropy is the only function  which satisfies the following three very 
reasonable requirements.
\par\smallskip

\begin{itemize}

\item{} The function $H$ must continuously depend on the probabilities $k_i$, 
$1\leq i\leq N$.

\item{} When the $k_i$ are all equal to $1/N$ the function
$N\mapsto H(1/N,\ldots,1/N)$ ($N$ terms) must increase monotonically with $N$.

\item{} When some possible values of the random variable $X$ are obtained as the 
result of two successive choices, the value of $H(X)$ must be equal
to the weighted sum of the individual values of $H$. For example, for a random
variable $X$ with the three possible values~:  $x_1$ with probability $k_1=1/2$, $x_2$ with
probability $k_2=1/3$ and $x_3$ with probability $k_3=1/6$, the values $x_1$, $x_2$
and $x_3$ can be obtained in two steps. In the first step, a first trial is done
in which one looks at the value taken by a random variable $Y$ with two possible values, 
$y_1$ and $y_2$, both obtained with probability $1/2$. In the second step, 
if the value taken by $Y$ is $y_1$, one states that the value taken by $X$ is $x_1$; 
if the value taken by $Y$ is $y_2$, one looks at the value taken by a random variable
$Z$ with two possible values, $z_1$ with probability $2/3$ and $z_2$ with probability
$1/3$. If the value taken by $Z$ is $z_1$, one states that the value taken by $X$
is $x_2$, and if the value taken by $Z$ is $z_2$, one states that the value taken by $X$ 
is $x_3$. The equality that the function $H$ is required to satisfy is
 $$H\left(\frac{1}{2},\frac{1}{3},\frac{1}{6}\right)=H\left(\frac{1}{2},\frac{1}{2}\right)
    +\frac{1}{2}H\left(\frac{2}{3},\frac{1}{3}\right)\,.
 $$ 
\end{itemize}
    
\par\smallskip 
Interested readers are referred to Alain Chenciner's paper
\cite{Chenciner2017} for a more detailed account of Claude Shannon's works and their influence 
on today's science.

\par\smallskip\noindent
{\rm 4.}\quad The American physicist Edwin Thompson Jaynes (1922--1998) observed,
in \cite{Jaynes1963, Jaynes1968} (see also \cite{LimitingDensityWikipedia}), 
that the definition \ref{EntropyDefinition} of entropy
for a continuous statistical state of probability density $\rho$ 
with respect to the Liouville measure, 
 $$s(\rho)=\int_M\log\left(\frac{1}{\rho(x)}\right)\rho(x)\lambda_\omega(\d x)
          =-\int_M\log\bigl(\rho(x)\bigr)\rho(x)\lambda_\omega(\d x)\,,
 $$
\emph{is not} a correct adaptation of Shannon's entropy
for a discrete statistical state which can take $N$ distinct values $x_i$, 
with the respective probabilities $k_i$, 
 $$H(X)=-\sum_{i=1}^N(\log k_i)k_i\,,\quad \hbox{with}\ 
          k_i\geq 0\ \hbox{for all}\ i\in\{1,\ldots,N\}\ 
           \hbox{and}\ \sum_{i=1}^Nk_i=1\,. 
 $$  
While $H(X)$ is always a dimensionless number satisfying $H(X)\geq 0$, 
and $H(X) =0$ if and only if there exists only one integer $i\in\{1,\ldots,N\}$ 
such that $k_i=1$, all other $k_j$, fo $j\neq i$, being equal to $0$, the above 
expression  of $s(\rho)$ depends on the chosen units. Indeed in this expression, 
while $\rho(x)\lambda_\omega(\d x)$ is dimensionless, $\rho (x)$, as well as 
$\lambda_\omega(\d x)$ are not dimensionless. A change of the units (of length, 
time and mass) changes the value of the the term  $\log\bigl(\rho(x)\bigr)$ by 
addition of a constant, which can be either positive or negative.  
Therefore when one uses definition 
\ref{EntropyDefinition}, the sign of entropy does not have any physical meaning. 
In the above cited papers of Jaynes, the author considered problems in 
statistics more general than those encountered in classical statistical mechanics, 
in which the Liouville measure may not be available. He proposed to replace, in 
the expression of the entropy $s(\rho)$, the term 
$\displaystyle\log\left(\frac{1}{\rho(x)}\right)$
by $\displaystyle\log\left(\frac{m(x)}{\rho(x)}\right)$, where $m(x)$ is the 
probability density of a reference statistical state with respect to which the entropy
$s(\rho)$ is evaluated. Of course the probability densities $m(x)$ and $\rho(x)$
must be taken with respect to the same measure. In the framework of classical 
statistical mechanics, this measure is the Liouville measure $\lambda_\omega$,
so the correction proposed by Jaynes can be written
 $$s_{\rm Jaynes}(\rho)=\int_M\log\left(\frac{m(x)}{\rho(x)}\right)
    \rho(x)\lambda_\omega(\d x)\,.
 $$
Probably because he considered problems in which the Liouville measure did not 
appear, Jaynes did not clearly state how $m(x)$ should be chosen, although he 
recommanded the use of a probability density invariant by the group of automorphisms 
of the considered measurable space. Therefore it seems that in the framework of 
classical statistical mechanics, when  the support $W$ of 
$\rho$\footnote{The support of $\rho$ is the closure of the subset of $M$ made 
of points $x\in M$ such that $\rho(x)\neq 0$.}  is of finite 
$\lambda_\omega$-measure, one should use the following probability density~:
 $$m(x)=\begin{cases}
         \displaystyle\frac{1}{\lambda_\omega(W)}&\text{when $x\in W$},\\
          0&\text{when $x\notin W$}.
          \end{cases}
 $$
With this choice of $m$, $s(\rho)$ and $s_{\rm Jaynes}(\rho)$
are related by
 $$s_{\rm Jaynes}(\rho)=s(\rho)-\log\bigl(\lambda_\omega(W)\bigr)\,.
 $$
The corrected entropy $s_{\rm Jaynes}(\rho)$ proposed by Jaynes is dimensionless.
It differs from the entropy $s(\rho)$ of definition \ref{EntropyDefinition} only
by a constant, which depends on the units chosen for time, length and mass, 
and can take negative as well as positive values.
\par\smallskip

In calculus of variations, it may be useful to consider infinitesimal 
variations of $\rho$ whose support does not always remain contained in the 
support of $\rho$. Instead of the support of $\rho$, one should take for $W$, 
in the above formula, an open subset of $M$ of finite $\lambda_\omega$-measure 
which contains the support of $\rho$.
 
\par\smallskip\noindent
{\rm 5.}\quad 
For a better understanding of how the entropies of continuous and discrete 
statistical states are related, let us consider the process of discretization 
of a continuous statistical state. As above, we assume that the support $W$ of 
the probability density $\rho$ is of finite $\lambda_\omega$-measure. For 
simplicity\footnote{These assumptions could probably be avoided with the use 
of more sophisticated concepts in integration theory, such as the Stieltjes 
integral, so named in honour of the Dutch mathematician Thomas Joannes Stieltjes 
(1856--1892).}, we moreover assume that $\rho_{\rm max}=\sup_{x\in M}\rho(x)$ 
too is finite and that, for each real $r$
satisfying $0\leq r\leq\rho_{\rm max}$, 
 $$\lambda_\omega\Bigl(\bigl\{x\in W
    \bigm|\rho(x)=r\bigr\}\Bigr)=0\,.
 $$ 
For each $r\geq 0$, let us set
 $$G(r)=\lambda_\omega\Bigl(\bigl\{x\in W\bigm|0\leq\rho(x)\leq r\bigr\}\Bigr)\,.
 $$
Then $G$ is a continuous and monotonically increasing function which takes all 
values in the closed interval $[0,\lambda_\omega(W)]$.  Let $N$ be an integer 
satisfying $N>2$. There exist $N$ real numbers $r^N_i$, $1\leq i\leq N$, such that
for each $i\in\{1,\ldots,N\}$
 $$G(r^N_i)=\frac{i\lambda_\omega(W)}{N}\,.
 $$
We set
 $$V^N_1=\bigl\{x\in W\bigm|0\leq\rho(x)\leq r_1\bigr\}\,,
 $$
and, for each $i\in\{2,\ldots,N\}$,
 $$V^N_i=\bigl\{x\in W\bigm|r_{i-1}<\rho(x)\leq r_i\bigr\}\,.
 $$
The $V^N_i$ are measurable subsets of $M$ which satisfy, 
for $1\leq i,j\leq N$,
 $$\lambda_\omega(V^N_i)=\frac{\lambda_\omega(W)}{N}\,,\quad 
V^N_i\cap V^N_j=\emptyset\ \hbox{if}\ i\neq j\,,
                \quad \bigcup_{i=1}^NV^N_i=W\,.
 $$
Now we set, for each $i\in\{1,\ldots,N\}$,
 $$k^N_i=\int_{V^N_i}\rho(x)\lambda_\omega(\d x)\,,\quad
   \rho^N_i=\frac{N k^N_i}{\lambda_\omega(W)}\,.
 $$
We have
 $$0\leq k^N_i\leq 1\ \hbox{for each $i\in\{1,\ldots,N\}$}\,,\quad\sum_{i=1}^Nk^N_i=1\,.
 $$
Let $\rho^N$ be the function defined on $M$ by
 $$\rho^N(x)=\begin{cases}
   \displaystyle\frac{Nk^N_i}{\lambda_\omega(W)}&
    \text{if $x\in V^N_i$, $1\leq i\leq N$,}\\
     0&\text{if $x\notin\bigcup_{i=1}^NV^N_i=W$}\,.
      \end{cases}
 $$
The function $\rho^N$ is everywhere $\geq 0$ on $M$, and only takes $N$
distinct non-zero values. It is a discrete approximation of the probability density
$\rho$, which satisfies
 $$\int_M\rho^N(x)\lambda_\omega(\d x)=\sum_{i=1}^Nk^N_i=1\,.
 $$
The function $\rho^N$ is therefore the probability density of a statistical state
on $M$. Although it is not continuous, we can use \ref{EntropyDefinition} 
to calculate $s(\rho^N)$. We obtain
 $$s(\rho^N)=\sum_{i=1}^Nk^N_i(-\log k^N_i)+\log\bigl(\lambda_\omega(W)\bigr)-\log N\,.
 $$
We observe that the term $\sum_{i=1}^Nk_i(-\log k_i)$ is the Shannon entropy $H(X^N)$
of a random variable $X^N$ wich can take $N$ distinct values, for example the values
$1,\ldots,N$, with the respective probabilities $k^N_1,\ldots,k^N_N$. So we can write
 $$H(X^N)=s(\rho^N)-\log\bigl(\lambda_\omega(W)\bigr)+\log N
         =s_{\rm Jaynes}(\rho^N)+\log N\,.
 $$
When $N\to+\infty$, $s_{\rm Jaynes}(\rho^N)\to s_{\rm Jaynes}(\rho)$ and 
$\log N\to +\infty$. The above equality proves that when $N\to+\infty$, the
Shannon entropy of the discrete approximation, by a random variable $X^N$ which 
can take $N$ distinct non-zero values, of the continuous statistical state
of probability density $\rho$, does not remain bouded  and increases as fast 
as $\log N$.
   
\par\smallskip\noindent
{\rm 6.}\quad
During the years 1950--1960, several scientists, notably
Edwin Thompson Jaynes cited above (see also \cite{Jaynes1957a, Jaynes1957b}
by the same author) 
and the American mathematician George Whitelaw Mackey (1916--2006) \cite{Mackey1963}, 
proposed the use of information theory in thermodynamics. 
\par\smallskip

Interested readers are referred to Roger Balian's paper \cite{Balian2005}, 
in which they will find a clear account
of the use of probability concepts in physics and of information 
theory in quantum mechanics.  
\end{enonce}

\begin{prop}
\label{ConstancyOfEntropy}
On a symplectic manifold $(M,\omega)$, we consider a smooth  Hamiltonian 
$H\in C^\infty(M,\RR)$ which does not depend on time. Let $X_H$ be the 
associated Hamiltonian vector field on $M$. Let $\rho(t_0)$ be the probability density
of a smooth statistical state of the dynamical system determined by $X_H$ at a time
$t_0$. The probability density $\rho(t_1)$ of the statistical state of the system 
at any other time $t_1$ at which the system still exists is such that
 $$s\bigl(\rho(t_1)\bigr)=s\bigl(\rho(t_0)\bigr)\,.
 $$  
In other words, the entropy of the statistical state of the system remains constant 
as long as this statistical state exists. 
\end{prop}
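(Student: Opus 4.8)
The plan is to exploit the two facts already established: the reduced flow $\Phi^{X_H}_{t_1-t_0}$ is a symplectomorphism whose inverse is $\Phi^{X_H}_{t_0-t_1}$, and the Liouville measure $\lambda_\omega$ is invariant under symplectomorphisms. Writing $\Phi=\Phi^{X_H}_{t_1-t_0}$ for brevity, the evolution rule for densities recalled in \ref{EvolutionStatisticalState} reads $\rho(t_1,x)=\rho\bigl(t_0,\Phi^{-1}(x)\bigr)$ for every $x\in M$. Introducing the single-variable function $g(u)=\log(1/u)\,u$, with the convention $g(0)=0$ coming from \ref{EntropyDefinition}, the entropy of the state at time $t_1$ is, whenever the integrand is integrable,
$$s\bigl(\rho(t_1)\bigr)=\int_M g\bigl(\rho(t_1,x)\bigr)\,\lambda_\omega(\d x)
   =\int_M g\Bigl(\rho\bigl(t_0,\Phi^{-1}(x)\bigr)\Bigr)\,\lambda_\omega(\d x)\,.$$

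First I would record the change-of-variables identity attached to the invariance $\Phi_*\lambda_\omega=\lambda_\omega$: for any $\lambda_\omega$-integrable function $f$ on $M$,
$$\int_M f(x)\,\lambda_\omega(\d x)=\int_M f\bigl(\Phi(y)\bigr)\,\lambda_\omega(\d y)\,.$$
Then I would apply this with $f(x)=g\bigl(\rho(t_0,\Phi^{-1}(x))\bigr)$. Since $f\circ\Phi$ is the map $y\mapsto g\bigl(\rho(t_0,\Phi^{-1}(\Phi(y)))\bigr)=g\bigl(\rho(t_0,y)\bigr)$, the right-hand side collapses exactly to $\int_M g\bigl(\rho(t_0,y)\bigr)\,\lambda_\omega(\d y)=s\bigl(\rho(t_0)\bigr)$, which gives the asserted equality.

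The one point deserving care — and the closest thing to an obstacle — is the treatment of the convention $s=-\infty$ when the integrand fails to be integrable. Because $\Phi$ is a measure-preserving bijection, the function $x\mapsto g\bigl(\rho(t_1,x)\bigr)$ is $\lambda_\omega$-integrable if and only if $y\mapsto g\bigl(\rho(t_0,y)\bigr)$ is, and their positive and negative parts transform identically under the substitution. Hence either both integrals are finite and equal by the computation above, or both densities are assigned the value $-\infty$; in every case $s\bigl(\rho(t_1)\bigr)=s\bigl(\rho(t_0)\bigr)$. I would therefore phrase the argument so that the change of variables is applied separately to the positive and negative parts of $g\circ\rho$, which makes the equivalence of integrability and the equality of the (possibly infinite) values simultaneously transparent. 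No deeper difficulty arises, since the whole statement is a direct consequence of the invariance of $\lambda_\omega$ under the Hamiltonian flow.
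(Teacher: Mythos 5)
Your proof is correct and follows essentially the same route as the paper's: apply the evolution rule $\rho(t_1)=\rho(t_0)\circ\Phi^{X_H}_{t_0-t_1}$ from \ref{EvolutionStatisticalState}, change variables via the invariance of the Liouville measure under the symplectomorphism $\Phi^{X_H}_{t_1-t_0}$ (\ref{LiouvilleMeasure}), and conclude equality of the entropy integrals. Your handling of the $s=-\infty$ case by transporting positive and negative parts separately is in fact slightly more careful than the paper's one-line remark that the same calculation yields a divergent integral, but it is a refinement of, not a departure from, the same argument.
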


\begin{proof}
As seen in \ref{EvolutionStatisticalState},
$\rho(t_1)=\rho(t_0)\circ\Phi^{X_H}_{t_0-t_1}$, so for each $x\in M$,
 $$\rho(t_0,x)=\rho\bigl(t_1,\Phi^{X_H}_{t_1-t_0}(x)\bigr)\,.
 $$ 
When $s\bigl(\rho(t_0)\bigr)\neq-\infty$, we can write
 \begin{align*}
  s\bigl(\rho(t_0)\bigr)
           &=\int_M\log\left(\frac{1}{\rho(t_0,x)}\right)\rho(t_0,x)\lambda_\omega(\d x)\\
           &=\int_M\log\left(\frac{1}{\rho\bigl(t_1,\Phi^{X_H}_{t_1-t_0}(x)\bigr)}\right)
                    \rho\bigl(t_1,\Phi^{X_H}_{t_1-t_0}(x)\bigr)(\lambda_\omega)(\d x)\\
           &=\int_M\log\left(\frac{1}{\rho(t_1,y)}\right)
                    \rho(t_1,y)(\Phi^{X_H}_{t_1-t_0})_*(\lambda_\omega)(\d y)  \\
           &=\int_M\log\left(\frac{1}{\rho(t_1,y)}\right)\rho(t_1,y)\lambda_\omega(\d y)\\
           &=s\bigl(\rho(t_1)\bigr)\,, 
 \end{align*}
where we have used the change of integration variable $y=\Phi^{X_H}_{t_1-t_0}(x)$ 
and the invariance of the Liouville measure by symplectomorphism (\ref{LiouvilleMeasure}).
When $s\bigl(\rho(t_0)\bigr)=-\infty$, the same calculation leads to a divergent integral
for the expression of $s\bigl(\rho(t_1)\bigr)$, which therefore is equal to $-\infty$.
\end{proof}

\subsection{Gibbs states for a Hamiltonian system}
\label{subsec:2.3} 
In this subsection, $H$ is a smooth Hamiltonian which does not depend on time,
defined on a symplectic manifold $(M,\omega)$, and $X_H$ is the associated Hamiltonian
vector field. The \emph{Gibbs states} defined here are built with the Hamiltonian $H$
as the only conserved quantity. Their main properties are briefly indicated. Gibbs state
for the Hamiltonian action of a Lie group are considered in Section \ref{sec:3}

\begin{prop}\label{StationarityOfEntropy}
Under the assumptions and with the notations of \ref{subsec:2.3}, let $\rho$ be the
probability density, with respect to the Liouville measure $\lambda_\omega$, 
of a smooth statistical state on $M$. We assume that $\rho$ is such that the integrals
which define the entropy $s(\rho)$ (definition \ref{EntropyDefinition} ) 
and the mean value ${\mathcal E}_\rho(H)$ of the Hamiltonian $H$ (definition 
\ref{DefiStatisticalState}) are convergent and can be differentiated under the sign $\int$
with respect to infinitesimal variations of $\rho$. 
The entropy function $s$ is stationary at $\rho$
with respect to smooth infinitesimal variations of $\rho$ which leave fixed the 
mean value of $H$ if and only if there exists a real $\beta\in\RR$ such that, for every $x\in M$,
 $$\rho(x)=\frac{1}{P(\beta)}\exp\bigl(-\beta H(x)\bigr)\,,\quad\hbox{with}\quad
      P(\beta)=\int_M\exp\bigl(-\beta H(x)\bigr)\lambda_\omega(\d x)\,.
 $$
\end{prop}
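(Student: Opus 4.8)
The plan is to treat this as a constrained calculus-of-variations problem and to recover the Gibbs density by the method of Lagrange multipliers. First I would record the first-order variations of the relevant integral functionals. Writing an infinitesimal variation of the density as $\rho\mapsto\rho+\delta\rho$, where $\delta\rho$ is a smooth function, the normalization constraint $\int_M\rho(x)\,\lambda_\omega(\d x)=1$ and the requirement that the mean value $\mathcal{E}_\rho(H)$ stay fixed translate, to first order, into the two linear conditions
$$\int_M\delta\rho(x)\,\lambda_\omega(\d x)=0\,,\qquad \int_M H(x)\,\delta\rho(x)\,\lambda_\omega(\d x)=0\,.$$
A direct computation of the first variation of the entropy functional of Definition \ref{EntropyDefinition}, differentiating under the integral sign as the hypotheses permit and using $\tfrac{\d}{\d\rho}(\rho\log\rho)=\log\rho+1$, gives
$$\delta s(\rho)=-\int_M\bigl(\log\rho(x)+1\bigr)\,\delta\rho(x)\,\lambda_\omega(\d x)\,.$$
Stationarity of $s$ under the admissible variations means precisely that this linear functional of $\delta\rho$ vanishes whenever $\delta\rho$ satisfies the two constraint conditions above.

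The core of the argument is the standard linear-algebra fact underlying Lagrange multipliers: a linear functional vanishing on the intersection of the kernels of finitely many linear functionals is a linear combination of them. Here the entropy variation is represented, via integration against $\lambda_\omega$, by the density $-(\log\rho+1)$, while the two constraints are represented by the densities $1$ and $H$. Stationarity therefore forces $\log\rho+1$ to lie in the linear span of $1$ and $H$; that is, there exist real constants $\lambda$ and $\beta$ with
$$\log\rho(x)+1=-\lambda-\beta H(x)\qquad\hbox{for all }x\in M\,.$$
Exponentiating gives $\rho(x)=\exp(-1-\lambda)\exp\bigl(-\beta H(x)\bigr)=C\exp\bigl(-\beta H(x)\bigr)$ for a positive constant $C$, and imposing $\int_M\rho\,\lambda_\omega=1$ identifies $C=1/P(\beta)$ with $P(\beta)=\int_M\exp\bigl(-\beta H(x)\bigr)\,\lambda_\omega(\d x)$, which is exactly the asserted form. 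The converse implication is the easy direction: starting from $\rho=\frac{1}{P(\beta)}\exp(-\beta H)$ one has $\log\rho+1=1-\log P(\beta)-\beta H$, so that
$$\delta s(\rho)=-\bigl(1-\log P(\beta)\bigr)\int_M\delta\rho\,\lambda_\omega(\d x)+\beta\int_M H\,\delta\rho\,\lambda_\omega(\d x)\,,$$
and both integrals vanish on admissible variations, whence $\delta s(\rho)=0$.

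I expect the main obstacle to be the rigorous justification of the Lagrange-multiplier step, namely the passage from ``the functional $\delta\rho\mapsto\int_M(\log\rho+1)\delta\rho\,\lambda_\omega$ annihilates the codimension-two space of admissible variations'' to ``$\log\rho+1$ is a combination of $1$ and $H$''. This requires the space of allowed variations to be rich enough (for instance all smooth functions of compact support with the two moment conditions) that integration against them separates continuous functions, so the representing density is determined; one must also arrange that the variations keep $\rho$ a nonnegative normalized density, which is exactly why one works with infinitesimal, first-order variations. A secondary point is the domain of $\log\rho$: the computation presupposes $\rho>0$, so one must either restrict attention to the support of $\rho$ or invoke the convergence and differentiation-under-the-integral hypotheses to control the set where $\rho$ vanishes. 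These same hypotheses are what make the formal expression for $\delta s(\rho)$ legitimate in the first place.
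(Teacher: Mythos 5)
Your proposal is correct and follows essentially the same route as the paper: first variation of the entropy functional under the two linear constraints (normalization and fixed $\mathcal{E}_\rho(H)$), then Lagrange multipliers to force $1+\log\rho+\alpha+\beta H=0$, and normalization to identify the constant with $1/P(\beta)$. The only difference is cosmetic — you spell out the linear-algebra content of the multiplier step and the converse direction, where the paper simply invokes ``a well known result in calculus of variations''.
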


\begin{proof}
Let $\tau\mapsto\rho_\tau$ be a smooth infinitesimal variation of 
$\rho$ which leaves fixed the mean value of $H$. Since 
$\displaystyle \int_M\rho_\tau(x)\lambda_\omega(\d x)$ and 
$\displaystyle\int_M\rho_\tau(x)H(x)\lambda_\omega(\d x)$ do not depend on $\tau$, 
it satisfies, for all $\tau\in]-\varepsilon,\varepsilon[$\,,
 $$\int_M\frac{\partial\rho(\tau,x)}{\partial\tau}\lambda_\omega(\d x)=0\,,
   \int_M\frac{\partial\rho(\tau,x)}{\partial\tau}H(x)\lambda_\omega(\d x)=0\,.
 $$
Moreover an easy calculation leads to 
 $$\frac{\d s(\rho_\tau)}{\d \tau}\Bigm|_{\tau=0}=-\int_M
    \frac{\partial\rho(\tau,x)}{\partial\tau}\Bigm|_{\tau=0}
     (1+\log\bigl(\rho(x)\bigr)\lambda_\omega(\d x)\,.
 $$
By a well known result in calculus of variations, this implies 
that the entropy functional is stationary at $\rho$
with respect to smooth infinitesimal variations of $\rho$ which leave fixed 
the mean value of $H$, if and only if there exist two real constants $\alpha$ 
and  $\beta$, the \emph{Lagrange multipliers}, such that, for every $x\in M$,
 $$1+\log(\rho(x))+\alpha+\beta H(x)=0\,,
 $$
which leads to
 $$\rho(x)=\exp\bigl(-1-\alpha-\beta H(x)\bigr)\,.
 $$
By writing that $\displaystyle\int_M\rho(x)\lambda_\omega(\d x)=1$, 
we see that $\alpha$ is determined by $\beta$:
 $$\exp(1+\alpha)=P(\beta)=\int_M\exp\bigl(-\beta H(x)\bigr)\lambda_\omega(\d x)\,.\eqno{\qedhere}
 $$
\end{proof}

\begin{defis}\label{DefiGibbsStatePartitionFunction} Let $\beta\in\RR$ be a real 
which satisfies the conditions of proposition \ref{StationarityOfEntropy}. The
smooth statistical state whose probability density, with respect to the Liouville 
measure $\lambda_\omega$, is
 $$\rho_\beta(x)=\frac{1}{P(\beta)}\exp\bigl(-\beta H(x)\bigr)\,,\quad x\in M\,,
 $$
with
 $$P(\beta)=\int_M\exp\bigl(-\beta H(x)\bigr)\lambda_\omega(\d x)\,,
 $$
is called the \emph{Gibbs state} associated to (or indexed by) $\beta$. The function
$P$ of the real variable $\beta$ is called the \emph{partition function} of the 
dynamical system determined by the Hamiltonian vector field $X_H$.
\end{defis}

\begin{prop}\label{InvarianceOfGibbsState}
Let $\beta\in\RR$ be a real which satisfies the conditions of proposition 
\ref{StationarityOfEntropy}. The probability density $\rho_\beta$ of the 
corresponding Gibbs state (definition \ref{DefiGibbsStatePartitionFunction}) 
remains invariant under the flow of the Hamiltonian vector field $X_H$.
\end{prop}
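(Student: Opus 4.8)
The plan is to exploit the fact that the probability density $\rho_\beta$ of the Gibbs state depends on the point $x\in M$ only through the value $H(x)$ of the Hamiltonian, together with the fact that $H$ itself is a first integral of $X_H$. Since the flow of $X_H$ preserves $H$, it automatically preserves any function of $H$, and in particular $\rho_\beta$.

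First I would recall, as already noted in \ref{HamiltonianVectorFieldsInMechanics}, that $H$ is constant along each integral curve of $X_H$, that is, $H\circ\Phi^{X_H}_t=H$ on the open subset of $M$ where $\Phi^{X_H}_t$ is defined. If one prefers an infinitesimal justification, this conservation follows directly from the defining relation $\mathi(X_H)\omega=-\d H$\,: contracting it with $X_H$ gives $\d H(X_H)=-\omega(X_H,X_H)=0$ by the antisymmetry of $\omega$, so the derivative of $H$ along $X_H$ vanishes identically, and $H$ is therefore constant on integral curves of $X_H$.

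Next I would substitute into the explicit formula for $\rho_\beta$ from \ref{DefiGibbsStatePartitionFunction}. Since $P(\beta)$ is a constant independent of $x$, for every $x\in M$ and every admissible $t$ one has
 $$\rho_\beta\bigl(\Phi^{X_H}_t(x)\bigr)=\frac{1}{P(\beta)}\exp\bigl(-\beta H(\Phi^{X_H}_t(x))\bigr)=\frac{1}{P(\beta)}\exp\bigl(-\beta H(x)\bigr)=\rho_\beta(x)\,,
 $$
the middle equality being exactly the conservation of $H$ along the flow. This is precisely the assertion that $\rho_\beta$ is invariant under the flow of $X_H$.

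I expect no genuine obstacle here, since the entire content reduces to the first-integral property of $H$, which is already available. The only point deserving a word of care is that $\Phi^{X_H}_t$ need only be a diffeomorphism between open subsets of $M$ (the flow may be incomplete), so the displayed equality should be read on the natural domain of definition of $\Phi^{X_H}_t$\,; but since both sides depend on $x$ only through $H(x)$ and $H$ is preserved wherever the flow exists, this causes no difficulty.
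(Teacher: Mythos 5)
Your proof is correct and follows exactly the paper's own argument: $H$ is a first integral of $X_H$, and since $\rho_\beta$ depends on $x$ only through $H(x)$, it is constant along the flow. The extra details you supply (the contraction $\mathi(X_H)\omega=-\d H$ giving $\d H(X_H)=0$, and the remark on the flow's domain of definition) merely flesh out the same one-line reasoning the paper uses.
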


\begin{proof}
Since the Hamiltonian $H$ does not depend on time, it is a first integral of
the differential equation determined by $X_H$, \emph{i.e.}, it keeps a constant 
value on each integral curve of $X_H$. Therefore $\rho_\beta$ keeps a constant 
value on each integral curve of $X_H$.
\end{proof}

\begin{enonce}[definition]{Some properties of Gibbs states}
\label{PropertiesOfGibbsStates}
We have seen (\ref{StationarityOfEntropy}) that the entropy functional
$s$ is stationary at each Gibbs state with respect to all infinitesimal variations 
of its probability density which leave invariant the mean value of the Hamiltonian $H$. 
A stronger result holds: given any Gibbs state of probability density $\rho_\beta$, 
on the set of all continuous statistical states whose probability density $\rho$
is such that ${\mathcal E}_\rho(H)={\mathcal E}_{\rho_\beta}(H)$, the entropy 
functional $s$ reaches its only strict maximum at the Gibbs state of probability 
density $\rho_\beta$.
\par\smallskip

When the set $\Omega$ of reals $\beta$ for which a Gibbs state indexed by 
$\beta$ exists is not empty, this set is an open interval \,$]\,a,b\,[$\, of $\RR$,
where either $a\in \RR$, or $a=-\infty$, either $b\in\RR$ and $b>a$, or $b=+\infty$.
When in addition $H$ is bounded from below, \emph{i.e.}, when there exists
$m\in\RR$ such that, for any $x\in M$, $m\leq H(x)$, the open interval $\Omega$
is unbound on the right side, \emph{i.e.}, $\Omega=\,]\,a,+\infty\,[$  where 
either $a\in\RR$, or $a=-\infty$.
\par\smallskip

 We have already defined on $\Omega$ the 
partition function $P$ (\ref{DefiGibbsStatePartitionFunction}). Other functions 
can be defined on $\Omega$ as follows. For each $\beta\in\Omega$, the entropy
$s(\rho_\beta)$ exists of course, and one can prove that the mean value 
${\mathcal E}_{\rho_\beta}(H)$ (definition \ref{DefiStatisticalState}) 
of the Hamiltonian $H$, in the Gibbs state indexed by $\beta$, exists too, 
as well as ${\mathcal E}_{\rho_\beta}(H^2)$ and 
${\mathcal E}_{\rho_\beta}\Bigl(\bigl(H-{\mathcal E}_{\rho_\beta}(H)\bigr)^2\Bigr)$. 
So we can set
 $$S(\beta)=s(\rho_\beta)\,,\quad 
   E(\beta)={\mathcal E}_{\rho_\beta}(H)\,,\quad \beta\in \Omega\,.
 $$
The functions $P$ (partition function), $E$ (mean value of the Hamiltonian, 
considered by physicists as the energy) and $S$ (entropy) so defined are of class 
$C^\infty$ on $\Omega$ and 
satisfy, for any $\beta\in\Omega$,
 \begin{align*}
  P(\beta)&>0\,,\\ 
  E(\beta)&=-\frac{1}{P(\beta)}\frac{\d P(\beta)}{\d\beta}
          =\frac{\d\bigl(-\log P(\beta)\bigr)}{\d\beta}\,,\\
 \frac{\d E(\beta)}{\d \beta}&=\frac{\d^2\bigl(-\log P(\beta)\bigr)}{\d\beta^2}
          =-{\mathcal E}_{\rho_\beta}
  \Bigl(\bigl(H-{\mathcal E}_{\rho_\beta}(H)\bigr)^2\Bigr)\,,
 \end{align*}
 \begin{align*}
 S(\beta)&=\log P(\beta)+\beta E(\beta)
           =\beta\frac{\d\bigl(-\log P(\beta)\bigr)}{\d\beta}-\bigl(-\log P(\beta)\bigr)\,,\\
 \frac{\d S(\beta)}{\d \beta}&=\beta\frac{\d E(\beta)}{\d \beta}\,.
\end{align*}   
The above expression of $\displaystyle\frac{\d E(\beta)}{\d \beta}$ proves that
$\beta\mapsto E(\beta)$ is a non-increasing function. When the Hamiltonian $H$ is 
not a constant, for each $\beta\in\Omega$, the continuous function defined on $M$
$x\mapsto \bigl(H(x)-{\mathcal E}_{\rho_\beta}(H)\bigr)^2$ takes its values in 
$\RR^+$ and is not always equal to $0$. Its mean value 
${\mathcal E}_{\rho_\beta}\Bigl(\bigl(H-{\mathcal E}_{\rho_\beta}(H)\bigr)^2\Bigr)$ 
is therefore $>0$, which proves that $\beta\mapsto E(\beta)$ is a strictly 
decreasing function on $\Omega$. The map $E$ is open, and is a diffeomorphism of
$\Omega$ onto its image $\Omega^*$.
\par\smallskip

The above expression of $S(\beta)$ shows that the functions 
$\beta\mapsto -\log\bigl(P(\beta)\bigr)$ and $\beta\mapsto S(b)$ are 
\emph{Legendre transforms} of each other. They are indeed linked by the same 
relation as that which, in calculus of variations, links a hyper-regular Lagrangian
with the associated energy. Here the hyper-regular \lq\lq Lagrangian\rq\rq, defined on $\Omega$, is 
$\beta\mapsto -\log P(\beta)$, the Legendre map is the diffeomorphism $E:\Omega\to\Omega^*$, 
the \lq\lq energy\rq\rq, defined on $\Omega$, is $\beta\mapsto S(\beta)$, and the
\lq\lq Hamiltonian\rq\rq, defined on $\Omega^*$, is $S\circ E^{-1}$. By using the 
above expression of $\displaystyle\frac{\d S(\beta)}{\d \beta}$, we can write
 $$E^{-1}(e)=\frac{\d\bigl(S\circ E^{-1}(e)\bigr)}{\d e}\,,\quad e\in\Omega^*\,. 
 $$
As soon as 1869, the Legendre transform was used in thermodynamics by the French 
scientist François Massieu \cite{Balian2015, Massieu1869a, Massieu1869b, Massieu1876}.   
\par\smallskip

The results stated here without proof are proven below, in a more general setting,
for    Gibbs states (\ref{DiffPartitionFunction}, \ref{MaximalityEntropy}, 
\ref{EJDiffeoRmks}).
\end{enonce}

\begin{enonce}[definition]{Gibbs states, temperatures and thermodynamic equilibria}
\label{ThermodynamicEquilibria}

Let us now assume that the dynamical system determined by the Hamiltonian vector 
field $X_H$ mathematically describes the evolution with time of a physical system, 
an object of the real world. Physicists consider each Gibbs state of the considered 
dynamical system, indexed by some $\beta\in\RR$, as the mathematical description of a 
\emph{state of thermodynamic equilibrium} of the corresponding physical system, 
and $\beta$ as a quantity related to the \emph{absolute temperature} 
$T$ of the physical system by the equality
 $$\beta=\frac{1}{kT}\,,\eqno(*)
 $$  
where $k$ is a constant which depends on the chosen units, 
called \emph{Boltzmann's constant}. This identification of Gibbs states with 
thermodynamic equilibria is justified by the following property. 
Let us consider two similar physical systems, mathematically described by two
Hamiltonian systems, whose Hamiltonians are, respectively, $H_1$ defined on the 
symplectic manifold $(M_1,\omega_1)$ and $H_2$ defined on the symplectic manifold 
$(M_2,\omega_2)$. We first assume that they are independent and both in a Gibbs 
state. We denote by $\rho_{1,\beta_1}$ and $\rho_{2,\beta_2}$ the probability 
densities of the Gibbs states, indexed by the reals $\beta_1$ and $\beta_2$, 
in which these two systems are, respectively. Let $E_1(\beta_1)$ and 
$E_2(\beta_2)$ be the corressponding mean values of their Hamiltonians. Let us 
now assume that the two systems are coupled in a way allowing an exchange of 
energy between them. For example, the corresponding objects of the real world 
can be two vessels containing a gas, separated by a wall allowing a transfer 
of heat between them. Coupled together, they make a new physical system, 
mathematically described by a Hamiltonian system on the symplectic manifold 
$(M1\times M_2,\ \omega_{\rm new}=p_1^*\omega_1+p_2^*\omega_2)$, where
$p_1:M_1\times M_2\to M_1$ and $p_2:M_1\times M_2\to M_2$ are the canonical 
projections. The Hamiltonian of this new system can be made as close to
$H_1\circ p_1+H_2\circ p_2$ as one wishes, by making very small the coupling 
between the two systems. We can therefore consider $H_1\circ p_1+H_2\circ p_2$
as a reasonable approximation of the Hamiltonian of the new system.
When the two subsystems are in the Gibbs states indexed, respectively, by $\beta_1$
and by $\beta_2$, the new system made of these two coupled 
subsystems is in the statistical state of probability density
$\rho_{1,\beta_1}\circ p_1+\rho_{2,\beta_2}\circ p_2$, and its entropy is
$S_1(\beta_1)+S_2(\beta_2)$. If $\beta_1\neq\beta_2$, the new system is not
in a Gibbs state. Let us indeed assume, for example, that $\beta_1<\beta_2$. 
If a transfer of energy between the two subsystems occurs, in which the 
energy of the first subsystem decreases while the energy of the second 
subsystem increases by an equal amount, the modified Gibbs state of the first
subsystem becomes indexed by $\beta'_1>\beta_1$ and that of the second subsystem 
by $\beta'_2<\beta_2$ since, as seen in  \ref{PropertiesOfGibbsStates}, for 
$i=1$ as well as for $i=2$, we have
$\displaystyle\frac{\d E_i(\beta'_i)}{\d\beta'_i}<0$. As long as 
$\beta'_1<\beta'_2$, such an energy transfer between the two subsystems results 
in an increase of the entropy of the total new system, until 
$\beta'_1=\beta'_2=\beta_n$, which indexes the Gibbs state of the new system 
for a mean value of its Hamiltonian $E_1(\beta_1)+E_2(\beta_2)$.
We have of course $\beta_1<\beta_n<\beta_2$, which proves that when the state of 
the new system evolves from its initial state towards its Gibbs state, the 
energy flow goes from the subsystem whose Gibbs initial state is indexed by the smaller
$\beta_1$ towards the subsystem whose initial Gibbs sate in indexed by the larger
$\beta_2$. This result is in agreement with everydays's experience, since 
equality $(*)$ implies that when $\beta>0$, a smaller value of $\beta$ 
corresponds to a higher temperature.
\end{enonce}

\begin{enonce}[definition]{Evolution towards a thermodynamic equilibrium}
\label{TowardsThermodynamicEquilibrium}
In the real world, the state of an approximately isolated system often evolves 
with time towards a state of thermodynamic equilibrium. When such a state is 
approximately reached, it remains approximately stationary, with
small fluctuations. Let us mathematically modelize this evolution as 
the variation with time of the statistical state of a Hamiltonian dynamical system, 
whose smooth Hamiltonian $H$ is defined on a very high-dimensional symplectic 
manifold $(M,\omega)$. Proposition \ref{InvarianceOfGibbsState} above, 
which states that Gibbs states do not change with time, seems to be in reasonably 
good agreement with the identification of Gibbs states with thermodynamic 
equilibria, although it does not explain fluctuations which are experimentally 
obseved in thermodynamic equilibria. On the contrary, proposition 
\ref{ConstancyOfEntropy} above, which states that as long as any smooth 
statistical state exists, its entropy remains constant, is in clear 
disagreement with the behaviour of isolated systems in the real world.
The mathematical description of the evolution with time of a real physical system
by the dynamical system determined by the Hamiltonian vector field of a smooth 
Hamiltonian which does not depend on time, together with definition 
\ref{EntropyDefinition} of the entropy, can be used only for reversible systems. 
It cannot be used to describe the evolution with time of some statistical states towards
the corresponding Gibbs state.    
\end{enonce}

\section{Gibbs states for Hamiltonian actions of Lie groups}
\label{sec:3}

The general definition of a Gibbs states is very natural~: it amounts to introduce, 
in the definition of a statistical state, not only the Hamiltonian, but other
conserved quantities too, on the same footing as the Hamiltonian. One may even 
forget the Hamiltonian and consider only the moment map of the Hamiltonian action.
\par\smallskip

This idea is already present in the book published by Gibbs in 1902 (\cite{Gibbs1902}, 
chapter I, page 42 and the following pages), the conserved quantities other than 
the Hamiltonian being the components of the total angular momentum. 
\par\smallskip

Following an idea first proposed around 1809 by Joseph Louis Lagrange (1736--1813),
Jean-Marie Souriau defined statistical states on the \emph{manifold of motions} 
of a Hamiltonian dynamical system, instead of on its \emph{phase space}. 
This approach allows a more natural treatment on the same footing of both the 
Hamiltonian and other conserved quantities, because the action of the group of 
translations in time, which may act only locally on the phase space, always acts 
globally on the space of motions. The concept of manifold of motions and its 
properties are presented in subsection \ref{subsec:3.1} below. Gibbs 
states for a Hamiltonian action of a Lie group on a symplectic manifold
are then defined (\ref{GibbsState}), together with generalized temperatures and 
partition functions, and their main property (maximality of entropy) is proven 
(\ref{MaximalityEntropy}). Thermodynamic functions associated to a  Gibbs state 
(mean value of the moment map and entropy) are maps defined on the set of 
generalized temperatures (subsection \ref{subsec:3.2}). The expressions of their 
differentials lead to the definition, on the set of generalized temperatures, 
of a remarkable Riemannian metric, linked to the Fisher-Rao metric of statisticians.
The adjoint action on the set of generalized temperatures is considered in 
subsection \ref{subsec:3.3}, in which the Riemannian metric induced on each 
adjoint orbit is expressed in terms of a symplectic cocycle.
 
\subsection{Symmetries and statistical states}
\label{subsec:3.1}
In this subsection we consider the dynamical system determined on a symplectic
manifold $(M,\omega)$, as explained in \ref{HamiltonianVectorFieldsInMechanics},
by a Hamiltonian vector field $X_H$ whose smooth Hamiltonian $H$, defined on 
$\RR\times M$ or on one of its open subsets, may depend on time.
  
\begin{enonce}[definition]{The manifold of motions of a Hamiltonian system}
\label{ManifoldOfMotions}
Jean-Marie Souriau called \emph{motion} of the dynamical system determined by
a Hamiltonian vector field $X_H$ any maximal solution $\varphi:t\mapsto \varphi(t)$
of Hamilton's differential equation
 $$\frac{\d \varphi(t)}{\d t}=X_H\bigl(t,\varphi(t)\bigr)\,. 
 $$
The \emph{manifold of motions} of the system, denoted by $\Mot(X_H)$, 
is simply the set of all motions, \emph{i.e.}, the set of all
maximal solutions $\varphi$ of the above differential equation. 
It always has the structure of a smooth symplectic manifold. For each 
$t_0\in \RR$, the map $h_{t_0}$ which associates, to each motion 
$\varphi:t\mapsto \varphi(t)$ whose interval of definition contains $t_0$, the point 
$h_{t_0}(\varphi)=\varphi(t_0)\in M$, is indeed, when the subset 
made of motions defined on an interval of $\RR$ which contains $t_0$ is not empty,
a bijection of this subset of $\Mot(X_H)$ onto an open subset of $M$. 
This simple fact allows the definition of a topology and a structure of smooth 
manifold on $\Mot(X_H)$ such that, for each $t_0\in \RR$, 
$h_{t_0}:\varphi\mapsto h_{t_0}(\varphi)=\varphi(t_0)$ 
is a diffeomorphism of the open subset of $\Mot(X_H)$ made of motions defined on 
an interval of $\RR$ which contains $t_0$, onto an open subset of $M$. 
Since the reduced flow of $X_H$ is made of symplectomorphisms, the pull-back 
$h_{t_0}^*\omega$ of the symplectic form $\omega$
does not depend on $t_0$, therefore determines globally a symplectic form 
$\omega_{\Mot(X_H)}$ on the manifold of motions $\Mot(X_H)$. 
\par\smallskip

The manifold $\Mot(X_H)$ may be a \emph{non-Hausdorff}\footnote{We recall that 
a topological space is said to be \emph{Hausdorff} when for each pair
of distinct elements $x$ and $y$ of this space, there exist
neighbourhoods $U$ of $x$ and $V$ of $y$ such that $U\cap V=\emptyset$.
This property is so named in honour of the German mathematician Felix Hausdorff
(1848--1942), an important founder of topology and set theory,
who after losing his Professor position at the university of Bonn, 
was driven to suicide by  the Nazi regime.} manifold, 
although any of its elements has an open Hausdorff neighbourhood symplectomorphic 
to an open subset of $M$.
\par\smallskip

We assume now, until the end of this section, that the Hamiltonian $H$ does not 
depend on time. For a given motion $\varphi\in\Mot(X_H)$, the value of 
$H\bigl(\varphi(t_0)\bigr)$ does not depend on the choice of $t_0$ in the interval on which 
$\varphi$ is defined. Therefore there exists a real-valued, smooth function $H_{\Mot}$, 
defined on $\Mot(X_H)$, such that, for
each $\varphi\in\Mot(X_H)$ and any $t_0$ in the interval on which $\varphi$ is 
defined, $H_{\Mot}(\varphi)=H\bigl(\varphi(t_0)\bigr)$. Since, for each $t_0\in\RR$,
the function $H_{\Mot}$, restricted to the open subset made of motions whose interval 
of definition contains $t_0$, is the pull-back $h_{t_0}^*(H)=H\circ h_{t_0}$ of 
the Hamiltonian $H$, the Hamiltonian vector field $X_{H_{\Mot}}$ on $\Mot(X_H)$,
restricted to this open substet of $\Mot(X_H)$, is the inverse image $h_{t_0}^*(X_H)$
of the Hamiltonian vector field $X_H$ defined on $M$.
\par\smallskip

For any $s\in \RR$ and any motion $\varphi:\,]\,a,b\,[\,\to M\in \Mot(X_H)$, 
defined on the interval $\,]\,a,b\,[\,\subset\RR$, let $\Phi_{\Mot}(s,\varphi)$ be
the parametrized curve, defined on the interval $\,]\,a-s,b-s\,[\,\subset\RR$, 
with  values in $M$,
 $$\Phi_{\Mot}(s,\varphi)(t)=\varphi(t+s)\,,\quad t\in\,]\,a-s, b-s\,[\,.
 $$   
One can easily see that $\Phi_{\Mot}(s,\varphi)\in \Mot(X_H)$ and that the map
 $$\Phi_{\Mot}:\RR\times\Mot(X_H)\to\Mot(X_H)
 $$ 
is a smooth action on the left of the additive Lie group $\RR$ on the manifold 
of motions $\Mot(X_H)$. The infinitesimal generator of this action is the vector 
field on $\Mot(X_H)$, temporarily denoted by $Z$, defined by the equality
 $$Z(\varphi)=\frac{\d\Phi_{\Mot}(s,\varphi)}{\d s}\biggm|_{s=0}\,,
               \quad \varphi\in\Mot(X_H)\,.
 $$
For any real $t_0$ which belongs to the open interval on which $\varphi$ is defined,
we have
 $$Th_{t_0}\bigl(Z(\varphi)\bigr)
    =\frac{\d\Bigl(h_{t_0}\bigl(\Phi_{\Mot}(s,\varphi)\bigr)\Bigr)}{\d s}\biggm|_{s=0}
    =\frac{\d\varphi(t_0+s)}{\d s}\biggm|_{s=0}
    =X_H\bigl(\varphi(t_0)\bigr)\,.
 $$
This result proves that the infinitesimal generator $Z$ of the action $\Phi_{\Mot}$ 
is the Hamiltonian vector field $X_{H_{\Mot}}$. Being generated by the flow of a
Hamiltonian vector field, the action $\varphi$ is therefore Hamiltonian. It admits 
the Hamiltonian $H_{\Mot}$ as a moment map (with the usual convention in which 
the Lie algebra of the additive Lie group $\RR$ is identified with $\RR$ with the 
zero bracket and its dual is too identified with $\RR$, the pairing by duality 
being the usual product of reals).
\par\smallskip

The reader will observe that while the flow of the vector field $X_H$ does not 
always determine a Hamiltonian action of $\RR$ on the symplectic manifold 
$(M,\omega)$, but only a local Hamiltonian action, except when all the motions are defined 
for all $t\in\RR$, the flow of $X_{H_{\Mot}}$ always determines a Hamiltonian 
action of $\RR$ on $(\Mot(X_H),\omega_{\Mot(X_H)})$. However, the price paid for 
obtaining better properties of the flow of a Hamiltonian vector field is the 
fact that $\Mot(X_H)$ can be a non-Hausdorff manifold. 
For this reason, some important results, for example the theorem which asserts
the unicity, for a given initial condition, of a maximal solution of a smooth 
differential equation, can no more be used.
\end{enonce}

In all what follows, the notation $(M,\omega)$ will be used te denote as well the
\emph{phase space} as the \emph{space of motions} of the dynamical system determined
by a smooth Hamiltonian which does not depend on time, according to the context 
in which it is used.   
 
\begin{defis}\label{DefinitionsGeneralizedTemperatures}
Let $\mathfrak g$ be a real, finite-dimensional Lie algebra which acts on a 
connected symplectic manifold $(M,\omega)$ by a Hamiltonian action 
$\varphi:{\mathfrak g}\to A^1(M)$\footnote{For each $k\in\NN$, I denote by $A^k(M)$
the space of fields of $k$-vectors and by $\Omega^k(M)$ the space of 
$k$-exterior differential forms on $M$, with the convention that
$A^0(M)=\Omega^0(M)=C^\infty(M,\RR)$. For $k=1$, $A^1(M)$ is therefore the space 
of smooth vector fields on $M$. Endowed with the Lie bracket as a composition law, 
it is an infinite-dimensional Lie algebra.}.  
Let $J:M\to{\mathfrak g}^*$  be a moment map of the action $\varphi$. For each 
${\bm\beta}\in{\mathfrak g}$, we consider the integral
$$\int_M\exp\bigr(-\langle J(x),{\bm\beta}\rangle\bigr)\lambda_\omega(\d x)\,,\eqno(*)
  $$
where $\lambda_\omega$ is the Liouville measure on $M$.

\par\smallskip\noindent
{\rm 1.}\quad The above integral $(*)$ is said to be \emph{normally convergent}
when there exists an open neihbourhood $U$ of $\bm\beta$ in $\mathfrak g$ and 
a function $f:M\to\RR^+$, integrable on $M$ with respect to the Liouville measure $\lambda_\omega$,
such that for any ${\bm\beta}'\in U$, the following inequality
 $$\exp\bigl(-\langle J(x), {\bm\beta}'\rangle\bigr)\leq f(x)
 $$ 
is satisfied for all $x\in M$.

\par\smallskip\noindent
{\rm 2.}\quad When ${\bm\beta}\in{\mathfrak g}$ is such that the integral $(*)$ above is
normally convergent, $\bm \beta$ is said to be a \emph{generalized temperature}. 
The subset of $\mathfrak g$ made of generalized temperatures will be denoted by $\Omega$.

\par\smallskip\noindent
{\rm 3.}\quad When the set $\Omega$ of generalized temperatures is not empty, the \emph{partition function} 
associated to the Hamiltonian action $\varphi$ is the function $P$ defined on $\Omega$ 
by the equality
 $$P({\bm\beta})=\int_M\exp\bigr(-\langle J(x),{\bm\beta}\rangle\bigr)\lambda_\omega(\d x)\,,
                  \quad x\in M\,,\quad{\bm\beta}\in\Omega\subset{\mathfrak g}\,.
 $$   
\end{defis}

\begin{prop}\label{DiffPartitionFunction} The assumptions and notations are those
of \ref{DefinitionsGeneralizedTemperatures}. The set $\Omega$ of    
generalized temperatures does not depend on the choice of the moment map $J$ of the 
Hamiltonian action $\varphi$. When it is not empty, this set is an open convex 
subset of the Lie algebra $\mathfrak g$, 
the partition function $P$ is of class $C^\infty$ and its differentials of all 
orders can be calculated by differentiation under the integration sign $\int$.
\end{prop}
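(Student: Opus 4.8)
The plan is to treat the three assertions in turn, the single analytic fact behind all of them being the pointwise convexity of ${\bm\beta}\mapsto\exp\bigl(-\langle J(x),{\bm\beta}\rangle\bigr)$.

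\emph{Independence of the moment map, and openness.} Since $M$ is connected, any two moment maps $J,J'$ of the action $\varphi$ satisfy $\d\langle J-J',\xi\rangle=0$ for every $\xi\in{\mathfrak g}$, so $J'=J+\mu$ for a constant $\mu\in{\mathfrak g}^*$. Then $\exp\bigl(-\langle J'(x),{\bm\beta}\rangle\bigr)=\exp\bigl(-\langle\mu,{\bm\beta}\rangle\bigr)\exp\bigl(-\langle J(x),{\bm\beta}\rangle\bigr)$, the first factor being a positive number independent of $x$ which, by continuity, is bounded on a neighbourhood of any fixed $\bm\beta$; hence an integrable dominating function for $J$ produces one for $J'$ up to that bound, and conversely, so $\Omega$ does not change. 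Openness is immediate from the definition: if the pair $(U,f)$ witnesses normal convergence at $\bm\beta$, then, $U$ being open, it is a neighbourhood of each of its points and the same pair witnesses normal convergence at every ${\bm\beta}'\in U$; thus $U\subset\Omega$.

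\emph{Convexity.} I would first record the inequality coming from convexity of the exponential: if ${\bm\delta}=\sum_j c_j{\bm\gamma}_j$ with $c_j\ge0$ and $\sum_j c_j=1$, then for every $x\in M$ one has $\exp\bigl(-\langle J(x),{\bm\delta}\rangle\bigr)\le\sum_j c_j\exp\bigl(-\langle J(x),{\bm\gamma}_j\rangle\bigr)\le\sum_j\exp\bigl(-\langle J(x),{\bm\gamma}_j\rangle\bigr)$. Integrating shows at once that the set $C\subset{\mathfrak g}$ on which the integral $(*)$ is merely finite is convex, and that $\Omega\subset C$. For the reverse, given ${\bm\beta}\in\mathrm{int}(C)$ I would enclose it in a polytope with finitely many vertices ${\bm\gamma}_1,\dots,{\bm\gamma}_m\in C$ (possible since a small ball around $\bm\beta$ lies in $C$): for ${\bm\beta}'$ in a neighbourhood of $\bm\beta$ contained in that polytope, the displayed inequality bounds $\exp\bigl(-\langle J(x),{\bm\beta}'\rangle\bigr)$ by the integrable function $f=\sum_j\exp\bigl(-\langle J(x),{\bm\gamma}_j\rangle\bigr)$, so ${\bm\beta}'\in\Omega$ and hence $\mathrm{int}(C)\subset\Omega$. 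Combined with openness and $\Omega\subset C$ this gives $\Omega=\mathrm{int}(C)$, which, as the interior of a convex set, is convex and open.

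\emph{Smoothness and differentiation under the sign $\int$.} Fixing a basis $(e_1,\dots,e_n)$ of ${\mathfrak g}$ and writing $J_i=\langle J,e_i\rangle$, the formal derivatives of the integrand are $(-1)^k J_{i_1}\cdots J_{i_k}\exp(-\langle J,{\bm\beta}\rangle)$. The standard theorem on differentiation under the integral sign applies as soon as each such derivative is dominated, uniformly for ${\bm\beta}'$ near $\bm\beta$, by one fixed integrable function, and \emph{this uniform domination is the main obstacle}, since the polynomial factors $J_{i_1}\cdots J_{i_k}$ are unbounded on $M$. The device is to absorb them into a slightly enlarged exponential: taking a norm on $\mathfrak g$ with dual norm $\|\cdot\|_*$, one has $|J_{i_1}(x)\cdots J_{i_k}(x)|\le\|J(x)\|_*^{\,k}\le C_{k,\varepsilon}\exp\bigl(\varepsilon\|J(x)\|_*\bigr)$ for every $\varepsilon>0$, and $\exp\bigl(\varepsilon\|J(x)\|_*\bigr)=\max_{\|{\bm\nu}\|\le\varepsilon}\exp\bigl(-\langle J(x),{\bm\nu}\rangle\bigr)$. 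Choosing $\varepsilon$ and a radius $r$ so small that $\overline B({\bm\beta},r+\varepsilon)\subset\Omega$, and dominating over that compact ball by a finite sum $f=\sum_j\exp(-\langle J,{\bm\gamma}_j\rangle)$ exactly as in the convexity step, I obtain for all ${\bm\beta}'\in B({\bm\beta},r)$, all multi-indices of length $k$, and all $x$ the bound $|J_{i_1}(x)\cdots J_{i_k}(x)|\exp(-\langle J(x),{\bm\beta}'\rangle)\le C_{k,\varepsilon}\,f(x)$, integrable and independent of ${\bm\beta}'$. This legitimises differentiation under the integral at each order; since every differentiated integrand is again of the form (polynomial in the $J_i$)$\times\exp(-\langle J,{\bm\beta}\rangle)$, the same estimate yields continuity of all the resulting integrals, whence $P\in C^\infty(\Omega)$ with all its differentials computed under the sign $\int$.
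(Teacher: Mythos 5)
Your proof is correct, and it matches the paper's proof on two of the three steps while diverging on the third. The independence and openness arguments coincide with the paper's (which simply notes that $J'-J$ is constant and that the open neighbourhood $U$ in the definition of normal convergence already places all of $U$ inside $\Omega$). Your smoothness step is the paper's device in different clothing: where you absorb the polynomial factor via $\vert J_{i_1}(x)\cdots J_{i_k}(x)\vert\le C_{k,\varepsilon}\exp\bigl(\varepsilon\Vert J(x)\Vert_*\bigr)=C_{k,\varepsilon}\max_{\Vert{\bm\nu}\Vert\le\varepsilon}\exp\bigl(-\langle J(x),{\bm\nu}\rangle\bigr)$, the paper bounds each factor by $\frac{2k}{\varepsilon}\exp\bigl(\frac{\varepsilon}{2k}\vert\langle J(x),X_i\rangle\vert\bigr)$ and chooses signs $\eta_i=\pm1$ so that the absorbed exponential becomes $\exp\bigl(-\bigl\langle J(x),{\bm\beta}''+\frac{\varepsilon}{2k}\sum_i\eta_iX_i\bigr\rangle\bigr)$, evaluated at a point of the $\varepsilon$-ball on which the single dominating function $f$ furnished by normal convergence at $\bm\beta$ applies --- so the paper never needs a polytope construction at this stage. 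The genuine divergence is convexity: the paper proves it directly, taking ${\bm\beta}_0,{\bm\beta}_1\in\Omega$ with witnesses $(U_0,f_0)$ and $(U_1,f_1)$ and checking that $f_\lambda=(1-\lambda)f_0+\lambda f_1$ dominates the integrand on $U_\lambda=\bigl\{(1-\lambda){\bm\beta}'_0+\lambda{\bm\beta}'_1\bigr\}$ --- the same convexity-of-the-exponential inequality you invoke, but applied to transport the dominating functions themselves; you instead establish $\Omega=\mathrm{int}(C)$, where $C$ is the mere-finiteness set, dominating near an interior point by the sum of the integrand's values at the vertices of an enclosing polytope. Your route is slightly longer but records a sharper fact the paper does not state: normal convergence holds automatically at every interior point of the finiteness domain. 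Two small points to polish: the bound $\vert J_{i_1}\cdots J_{i_k}\vert\le\Vert J\Vert_*^{\,k}$ presumes unit basis vectors (otherwise absorb $\prod_j\Vert e_{i_j}\Vert$ into $C_{k,\varepsilon}$), and your phrase \lq\lq exactly as in the convexity step\rq\rq\ now requires a polytope with vertices in $\Omega$ enclosing a \emph{compact ball} rather than a point --- still routine in finite dimension, since the closed ball $\overline B({\bm\beta},r+\varepsilon)$ admits a neighbourhood contained in the open set $\Omega$, but it deserves a sentence.
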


\begin{proof} When $\bm\beta$ is a generalized temperature, definition 
\ref{DefinitionsGeneralizedTemperatures} implies that there exists a 
neighbourhood $U$ of $\bm\beta$ whose all elements are    
generalized temperatures. When it is not empty, the set $\Omega$ of generalized temperatures 
is therefore open. When the moment map $J$ is replaced by another moment map $J'$, 
the difference $J'-J$ is a constant. The replacement of $J$ by $J'$ has no effect  on the eventual 
normal convergence of the above integral $(*)$, therefore $\Omega$ does not depend on the
choice of the moment map $J$. 
\par\smallskip

Let ${\bm\beta}_0$ and ${\bm\beta}_1$ be two distint elements in $\Omega$ (assumed to be
non-empty), $U_0$ and $U_1$ be neighbourhoods, respectively of  
${\bm\beta}_0$ and ${\bm\beta}_1$, $f_0$ and $f_1$ be the positive functions, defined on $M$
and integrable with respect to the Liouville measure, greater or equal, respectively, 
than the functions $x\mapsto \exp\bigl(-\langle J(x), {\bm\beta}'_0\rangle\bigr)$ and
$x\mapsto \exp\bigl(-\langle J(x), {\bm\beta}'_1\rangle\bigr)$ for all ${\bm\beta}'_0\in U_0$ 
and ${\bm\beta}'_1\in U_1$. For any $\lambda\in[0,1]$, 
$U_\lambda=\{(1-\lambda){\bm\beta}'_0+\lambda {\bm\beta}'_1\bigm|{\bm\beta}'_0\in U_0\,,\ {\bm\beta}'_1\in U_1\}$ 
is a neighbourhood of ${\bm\beta}_\lambda=(1-\lambda){\bm\beta}_0+\lambda {\bm\beta}_1$.
The function $f_\lambda=(1-\lambda)f_0+\lambda f_1$ is integrable on 
$M$. For any ${\bm\beta}'_\lambda\in U_\lambda$, it is greater or equal to the function 
$x\mapsto\exp\bigl(-\langle J(x), {\bm\beta}'_\lambda\rangle\bigr)$. Thereforee
${\bm\beta}_\lambda\in\Omega$, which proves the convexity of $\Omega$. 
\par\smallskip

For each $x\in M$ fixed, the $k$-th differential of $\exp\bigl(-\langle J(x), {\bm\beta}\rangle\bigr)$ 
with respect to ${\bm\beta}$ is
 $$D^k\Bigl(\exp\bigl(-\langle J, {\bm\beta}\rangle\bigr)\Bigr)=(-1)^kJ^{\otimes k}(x)
       \exp\bigl(-\langle J(x), {\bm\beta}\rangle\bigr)\,,
 $$
where $J^{\otimes k}(x)=J(x)\otimes\cdots\otimes J(x)\in ({\mathfrak g}^*)^{\otimes k}$.
Let us recall that $({\mathfrak g}^*)^{\otimes k}$ is canonically isomorphic with the space
${\mathcal L}^k({\mathfrak g},\RR)$ of $k$-multilinear forms on $\mathfrak g$. 
Let us choose any norm on $\mathfrak g$. We take on  
${\mathcal L}^k({\mathfrak g},\RR)$ the \emph{sup} norm. For any
$x\in M$, we have
 $$\Vert J^{\otimes k}(x)\Vert= \sup_{X_i\in{\mathfrak g}\,,\ 
    \Vert X_i\Vert\leq 1\,,\ 1\leq i\leq k}\vert
     \langle J(x),X_1\rangle\cdots\langle J(x),X_k\rangle\vert\,.
 $$
Let ${\bm\beta}\in\Omega$ be a generalized temperature. It follows from the 
definition of a generalized temperature that there exist a real $\varepsilon>0$  and
and a non-negative function $f$ defined on $M$, integrable with respect to the 
Liouville measure and greater than the function 
$x\mapsto \exp\bigl(-\langle J(x), {\bm\beta}'\rangle\bigr)$ 
for any ${\bm\beta}'\in{\mathfrak g}$ satisfying 
$\Vert {\bm\beta}'-{\bm\beta}\Vert\leq\varepsilon$. Let ${\bm\beta}''\in{\mathfrak g}$ 
be such that $\displaystyle\Vert {\bm\beta}''-{\bm\beta}\Vert\leq\frac{\varepsilon}{2}$. 
For all $X_i\in{\mathfrak g}$ satisfying  $\Vert X_i\Vert \leq 1$, with $1\leq i\leq k$,
and any $x\in M$, we have
 $$J^{\otimes k}(x)(X_1,\ldots,X_k)
  =\langle J(x),X_1\rangle\cdots\langle J(x),X_k\rangle\,.
 $$ 
Taking into account the inequality, valid for all $i\in\{1,\ldots k\}$, 
 $$\vert\langle J(x),X_i\rangle\vert\leq\frac{2k}{\varepsilon}
    \exp\left(\frac{\varepsilon}{2k}\vert\langle J(x),X_i\rangle\vert\right)\,,
 $$
we can write
 \begin{multline*}
  \left\vert J^{\otimes k}(x)(X_1,\ldots,X_k)\right\vert\exp\bigl(-\langle J(x),\bm{\beta}''\rangle\bigr)\\
  \leq \left(\frac{2k}{\varepsilon}\right)^k
  \exp\left(-\left\langle J(x),{\bm\beta}''+\frac{\varepsilon}{2k}(\eta_1X_1+\cdots+\eta_kX_k)\right\rangle\right)\,,
 \end{multline*} 
where the terms $\eta_i$, $1\leq i\leq k$, all equal either to $1$ or to $-1$, 
are chosen in such a way that $\bigl\langle J(x), \eta_iX_i\bigr\rangle\leq 0$. 
For each $i\in\{1,\ldots,k\}$, $\vert X_i\vert\leq 1$, therefore
 $$\left\Vert {\bm\beta}-\left({\bm\beta}''+ 
    \frac{\varepsilon}{2k}(\eta_1X_1+\cdots+\eta_kX_k\right)\right\Vert
     \leq\Vert{\bm\beta}-{\bm\beta}''\Vert+
      \frac{\varepsilon k}{2k}\leq\frac{\varepsilon}{2}+
       \frac{\varepsilon}{2}=\varepsilon\,.
 $$ 
We see that
 $$\left\vert J^{\otimes k}(x)(X_1,\ldots,X_k)\right\vert\exp\bigl(-\langle J(x),{\bm\beta}''\rangle\bigr)\leq f(x)\,.
 $$
By taking the upper bound of the left hand side when the $X_i$
take all possible values among elements in $\mathfrak g$ whose norm is smaller than or equal to $1$,
 $$\left\Vert J^{\otimes k}(x)\right\Vert\exp\bigl(-\langle J(x),{\bm\beta}''\rangle\bigr)\leq f(x)\,.
 $$
The integral
 $$
 \int_M D^k\Bigl(\exp\bigl(-\langle J(x), {\bm\beta}\rangle\bigr)\Bigr)\lambda_\omega(\d x)
 $$
is therefore normally convergent. It follows that the partition function
$P$ is of class $C^\infty$, and that its differentials of all orders can be calculated
by differentiation under the sign $\int$.
\end{proof}

\begin{defi}
\label{GibbsState}
Let ${\bm\beta}\in\Omega$ be a generalized temperature. The statistical state on $M$
whose probability density, with respect to the Liouville measure $\lambda_\omega$, is expressed as
 $$\rho_{\bm\beta}(x)=\frac{1}{P({\bm\beta})}\exp\bigr(-\langle J(x), {\bm\beta}\rangle\bigr)\,,\quad x\in M\,,
 $$  
is called the \emph{Gibbs state} associated to (or indexed by) $\bm\beta$.
\end{defi}

\begin{enonce}[definition]{Gibbs states of subgroups of the Galilei group}
The Galilei group, so named in honour of  the Italian scientist Galileo Galilei
(1564--1642), is the group of symmetries of the mathematical model of
space-time used in classical (non relativistic) mechanics. It is a 
ten-dimensional Lie group diffeomorphic to $\SO(3)\times\RR^7$. The Lie group of 
symmetries of any isolated mechanical system must contain the Galilei group
as a Lie subgroup. In his book \cite{Souriau1969}, Souriau has proven that 
for any mechanical system made of a set of material objects whose total mass is
non-zero, the set of generalized temperatures, for the  action of the Galilei 
group on the manifold of motions, is empty. There is therefore no    Gibbs state
for these systems. However, Gibbs states for subgroups of the Galilei group do exist
and have interesting interpretations in physics and in cosmology \cite{Souriau1974}.
\par\smallskip

The interested reader will find more results abut the Galilei group and its central extension,
Bargmann's group, in 
\cite{deSaxceVallee2010, deSaxceVallee2012, deSaxceVallee2016}. 
\end{enonce}

\begin{prop}
\label{MaximalityEntropy}
For any generalized temperature ${\bm\beta}\in\Omega$, the integral below
 $${\mathcal E}_{\rho_{\bm\beta}}(J)=\frac{1}{P({\bm\beta})}
     \int_M J(x)\exp\bigr(-\langle J(x), {\bm\beta}\rangle\bigr)\lambda_\omega(\d x)
 $$
is convergent. This integral defines the mean value ${\mathcal E}_{\rho_{\bm\beta}}(J)$
of the moment map $J$ in the    Gibbs state indexed by $\bm\beta$. 
Moreover, for any other continuous statistical state with a probability density 
$\rho_1$ with respect to the Liouville measure $\lambda_\omega$, such that 
${\mathcal E}_{\rho_1}(J)$ exists and is equal to ${\mathcal E}_{\rho_{\bm\beta}}(J)$, 
the entropy functional $s$ satisfies the inequality
$s(\rho_1)\leq s(\rho_{\bm\beta})$, and the equality
$s(\rho_1)=s(\rho_{\bm\beta})$ occurs if and only if $\rho_1=\rho_{\bm\beta}$. 
\end{prop}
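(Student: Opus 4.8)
The plan is to treat the statement as two separate assertions: the convergence of $\mathcal{E}_{\rho_{\bm\beta}}(J)$, which follows almost immediately from work already done, and the maximality of entropy, which I would recognize as the Gibbs variational principle, i.e. the nonnegativity of relative entropy. For the convergence, I would simply specialize the estimate established in the proof of Proposition \ref{DiffPartitionFunction} to $k=1$: it shows that $x\mapsto \Vert J(x)\Vert\exp\bigl(-\langle J(x),{\bm\beta}\rangle\bigr)$ is dominated near $\bm\beta$ by a $\lambda_\omega$-integrable function $f$, so the vector-valued integral defining $\mathcal{E}_{\rho_{\bm\beta}}(J)$ converges absolutely after dividing by $P({\bm\beta})>0$. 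The same domination, combined with $\langle J(x),{\bm\beta}\rangle\leq\Vert J(x)\Vert\,\Vert{\bm\beta}\Vert$, also gives $\int_M\rho_{\bm\beta}\,\vert\log\rho_{\bm\beta}\vert\,\lambda_\omega(\d x)<\infty$, so that $s(\rho_{\bm\beta})$ is finite.

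The key preliminary step is a closed form for the Gibbs entropy. Since $-\log\rho_{\bm\beta}(x)=\log P({\bm\beta})+\langle J(x),{\bm\beta}\rangle$ by definition \ref{GibbsState}, integrating against $\rho_{\bm\beta}$ gives $s(\rho_{\bm\beta})=\log P({\bm\beta})+\langle\mathcal{E}_{\rho_{\bm\beta}}(J),{\bm\beta}\rangle$. The crucial remark is that the same computation against any competitor $\rho_1$ yields the \emph{cross-entropy} identity $-\int_M\rho_1\log\rho_{\bm\beta}\,\lambda_\omega(\d x)=\log P({\bm\beta})+\langle\mathcal{E}_{\rho_1}(J),{\bm\beta}\rangle$, using only $\int_M\rho_1\,\lambda_\omega(\d x)=1$. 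Under the hypothesis $\mathcal{E}_{\rho_1}(J)=\mathcal{E}_{\rho_{\bm\beta}}(J)$ this right-hand side collapses to $s(\rho_{\bm\beta})$. This is precisely where the equal-mean constraint on $J$ enters, and it reduces the whole problem to comparing $s(\rho_1)$ with the cross-entropy.

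For the inequality I would invoke the elementary bound $\log t\leq t-1$ for $t>0$, which gives pointwise, with the convention $0\log 0=0$ and using $\rho_{\bm\beta}>0$ everywhere, the estimate $-\rho_1\log\rho_1\leq -\rho_1\log\rho_{\bm\beta}-\rho_1+\rho_{\bm\beta}$. The right-hand side is integrable, with integral equal to $s(\rho_{\bm\beta})-1+1=s(\rho_{\bm\beta})$ by the identity above. Hence the positive part of the entropy integrand is integrable, $s(\rho_1)$ is well defined in $[-\infty,+\infty)$, and monotonicity of the integral (valid in the extended-real sense) yields $s(\rho_1)\leq s(\rho_{\bm\beta})$. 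For the equality case, I would observe that $s(\rho_1)=s(\rho_{\bm\beta})$ forces the nonnegative function $\bigl(-\rho_1\log\rho_{\bm\beta}-\rho_1+\rho_{\bm\beta}\bigr)-\bigl(-\rho_1\log\rho_1\bigr)$ to have zero integral, hence to vanish $\lambda_\omega$-almost everywhere; the equality case of $\log t\leq t-1$ then gives $\rho_1=\rho_{\bm\beta}$ almost everywhere, and since both densities are continuous and $\lambda_\omega$ has full support, $\rho_1=\rho_{\bm\beta}$ identically.

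The hard part will not be any single idea but the integrability bookkeeping: one must guarantee that every integral manipulated is finite, or else compare integrals only in the extended-real sense. In particular the finiteness of the cross-entropy $-\int_M\rho_1\log\rho_{\bm\beta}\,\lambda_\omega(\d x)$ is exactly what the hypothesis ``$\mathcal{E}_{\rho_1}(J)$ exists'' provides, and one must handle cleanly the admissible possibility $s(\rho_1)=-\infty$, where the inequality is strict and equality cannot occur. The comparison step, where $s(\rho_1)$ may fail to be finite, is the point demanding the most care; once it is set up with the one-sided domination above, the argument closes without further difficulty.
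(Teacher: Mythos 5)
Your proposal is correct and follows essentially the same route as the paper: convergence of ${\mathcal E}_{\rho_{\bm\beta}}(J)$ from the normal-convergence estimate of Proposition \ref{DiffPartitionFunction}, then the pointwise bound $-\rho_1\log\rho_1\leq-\rho_1\log\rho_{\bm\beta}-\rho_1+\rho_{\bm\beta}$ (your $\log t\leq t-1$ is exactly the paper's tangent-line inequality for the convex function $z\mapsto z\log(1/z)$), integration using the equal-means hypothesis through the cross-entropy identity, and the equality case via the unique minimum of $z\mapsto z-\log z$ at $z=1$ together with continuity. Your explicit bookkeeping of the $s(\rho_1)=-\infty$ case is slightly more careful than the paper's, but it is the same argument.
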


\begin{proof}
The normal convergence (which implies the usual convergence) of the integral 
which defines ${\mathcal E}_{\rho_{\bm\beta}}(J)$ follows from Proposition 
\ref{DiffPartitionFunction}. Let $\rho_1$ be the probability density, 
with respect to $\lambda_\omega$, of another
continuous statistical state such that ${\mathcal E}_{\rho_1}(J)$ exists and is 
equal to ${\mathcal E}_{\rho_{\bm\beta}}(J)$. The function, defined on $\RR^+$, 
 $$z\mapsto h(z)=\begin{cases}
       z\log\left({\displaystyle\frac{1}{z}}\right)& \text{ if $z>0$}\\
         0& \text{if $z=0$}\end{cases}
 $$
being convex, the straight line tangent to its graph at one of its point 
$\bigl(z_0, h(z_0)\bigr)$ is always above this graph. Therefore, for all 
$z>0$ and $z_0>0$, the following inequality holds:
 $$h(z)\leq h(z_0)-(1+\log z_0)(z-z_0)=z_0 -z(1+\log z_0)\,.
 $$  
With $z=\rho_1(x)$ and $z_0=\rho_{\bm\beta(}x)$, for any $x\in M$, this inequality becomes 
 $$h\bigl(\rho_1(x)\bigr)=\rho_1(x)\log\left(\frac{1}{\rho_1(x)}\right)\leq \rho_{\bm\beta}(x)
    -\bigl(1+\log \rho_{\bm\beta}(x)\bigr)\rho_1(x)\,.
 $$
By integrating on $M$ both sides of the above inequality, we get, since $\rho_{\bm\beta}$ is 
the probability density of the    Gibbs state indexed by $\bm\beta$,
 $$s(\rho_1)\leq 1 -1 -\int_M\rho_1(x)\log \rho_{\bm\beta}(x)\lambda_\omega(\d x)=s(\rho_{\bm\beta})\,.
 $$
We have proven the inequality  $s(\rho_1)\leq s(\rho_{\bm\beta})$. If $\rho_1=\rho_{\bm\beta}$, of course
$s(\rho_1)=s(\rho_{\bm\beta})$. Conversely, let us now assume that $s(\rho_1)=s(\rho_{\bm\beta})$.
The functions $\varphi_1$ and $\varphi$, defined on $M$, whose expressions are
 $$\varphi_1(x)=\rho_1(x)\log\left(\frac{1}{\rho_1(x)}\right)\,,\
    \varphi(x)=\rho_{\bm\beta}(x)-\bigl(1+\log \rho_{\bm\beta}(x)\bigr)\rho_1(x)\,,\ x\in M\,,
 $$
are continuous, except, maybe, the function $\varphi$ at points $x$ where 
$\rho_{\bm\beta}(x)=0$ and $\rho_1(x)\neq 0$. For the Liouville measure 
$\lambda_\omega$, the subset of $M$ made of these points is of measure $0$, since
$\varphi$ is integrable.
The functions $\varphi$ and $\varphi_1$ satisfy the inequality $\varphi_1\leq \varphi$, 
are integrable on $M$ and their integrals are equal. Their difference $\varphi-\varphi_1$,
everywhere $\geq 0$ on $M$ and with an integral equal to $0$, is therefore everywhere equal to $0$.
Therefore, for any $x\in M$,   
 $$\rho_1(x)\log\left(\frac{1}{\rho_1(x)}\right)=\rho_{\bm\beta}(x)-\bigl(1+\log \rho_{\bm\beta}(x)\bigr)\rho_1(x)\,.\eqno{(*)}
 $$     
For any $x\in M$ such that $\rho_1(x)\neq 0$, we can divide both sides of the 
above equality by $\rho_1(x)$. We get
 $$\frac{\rho_{\bm\beta}(x)}{\rho_1(x)} - \log\left(\frac{\rho_{\bm\beta}(x)}{\rho_1(x)}\right)=1\,.
 $$
The function $z\mapsto z - \log z$ reaches its minimum at only one point $z>0$, the point
$z=1$, and its minimum is equal to $1$. So for all $x\in M$ such that $\rho_1(x)>0$,
$\rho_1(x)=\rho_{\bm\beta}(x)$. At points $x\in M$ such that $\rho_1(x)=0$,
equality $(*)$ proves that $\rho_{\bm\beta}(x)=0$. Therefore
$\rho_1=\rho_{\bm\beta}$ everywhere on $M$.   
\end{proof}

\begin{prop} 
We now assume that $\mathfrak g$ is the Lie algebra of a Lie group $G$ which acts
on the symplectic manifold $(M,\omega)$ by a Hamiltonian action $\Phi$, and that $\varphi$
is the Lie algebra action associated to $\Phi$. The Gibbs state 
indexed by any generalized temperature ${\bm\beta}\in\Omega$ is invariant by the 
restriction of the action $\Phi$ to the one-parameter subgroup 
$\bigl\{\exp(\tau {\bm\beta})\bigm|\tau\in\RR\bigr\}$ of $G$.
\end{prop}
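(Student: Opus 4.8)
The plan is to reduce the statement to the invariance already established for a single Hamiltonian in Proposition~\ref{InvarianceOfGibbsState}, by naming the correct Hamiltonian and identifying its flow with the action of the one-parameter subgroup. Fix ${\bm\beta}\in\Omega\subset{\mathfrak g}$ and set $H_{\bm\beta}=\langle J,{\bm\beta}\rangle$, a smooth real-valued function on $M$. By the very definition of a moment map of the Hamiltonian action $\varphi$, this function is a Hamiltonian for the infinitesimal generator $\varphi({\bm\beta})$ of the action in the direction ${\bm\beta}$; that is, $\varphi({\bm\beta})=X_{H_{\bm\beta}}$, so that $\mathi\bigl(\varphi({\bm\beta})\bigr)\omega=-\d H_{\bm\beta}$.

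First I would recall the standard relation between the Lie algebra action $\varphi$ and the group action $\Phi$: the reduced flow of the infinitesimal generator $\varphi({\bm\beta})$ is precisely the restriction of $\Phi$ to the one-parameter subgroup $\bigl\{\exp(\tau{\bm\beta})\bigm|\tau\in\RR\bigr\}$, namely $\Phi^{\varphi({\bm\beta})}_\tau=\Phi_{\exp(\tau{\bm\beta})}$ for all $\tau\in\RR$ (the sign convention fixing the identification of $\varphi({\bm\beta})$ with an infinitesimal generator is immaterial, since $\tau$ ranges over all of $\RR$). Consequently, proving invariance of the Gibbs state under the flow of $\varphi({\bm\beta})$ is the same as proving its invariance under the whole one-parameter subgroup.

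Next I would show that the density $\rho_{\bm\beta}$ is constant along each integral curve of $\varphi({\bm\beta})$. Since $H_{\bm\beta}$ is the Hamiltonian of its own Hamiltonian vector field, it is a first integral of that field: along any integral curve one has $\d H_{\bm\beta}\bigl(X_{H_{\bm\beta}}\bigr)=-\omega\bigl(X_{H_{\bm\beta}},X_{H_{\bm\beta}}\bigr)=0$ by antisymmetry of $\omega$. Hence $H_{\bm\beta}$, and therefore $\exp(-H_{\bm\beta})$ and $\rho_{\bm\beta}=\exp(-H_{\bm\beta})/P({\bm\beta})$, keeps a constant value on each integral curve; equivalently $\rho_{\bm\beta}\circ\Phi_{\exp(\tau{\bm\beta})}=\rho_{\bm\beta}$ for every $\tau\in\RR$. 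Combining this with the invariance of the Liouville measure under symplectomorphisms (recalled in~\ref{LiouvilleMeasure}) and the fact that each $\Phi_{\exp(\tau{\bm\beta})}$ is a symplectomorphism, the image of the measure $\rho_{\bm\beta}\lambda_\omega$ under $\Phi_{\exp(\tau{\bm\beta})}$ equals $\bigl(\rho_{\bm\beta}\circ\Phi_{\exp(-\tau{\bm\beta})}\bigr)\lambda_\omega=\rho_{\bm\beta}\lambda_\omega$, which is exactly the asserted invariance of the Gibbs state.

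I do not expect a genuine obstacle: once the right objects are named the content is bookkeeping parallel to the proof of~\ref{InvarianceOfGibbsState}. The only point demanding care is the identification $\Phi^{\varphi({\bm\beta})}_\tau=\Phi_{\exp(\tau{\bm\beta})}$ of the flow of the infinitesimal generator with the one-parameter subgroup, together with the sign convention linking $\varphi$ to $\Phi$; I would state that convention explicitly and remark that the conclusion is insensitive to it.
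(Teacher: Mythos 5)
Your proposal is correct and follows essentially the same route as the paper: the orbits of the one-parameter subgroup $\bigl\{\exp(\tau{\bm\beta})\bigm|\tau\in\RR\bigr\}$ are the integral curves of the Hamiltonian vector field with Hamiltonian $x\mapsto\langle J(x),{\bm\beta}\rangle$, this Hamiltonian is a first integral of its own flow, and hence $\rho_{\bm\beta}$ is constant along those orbits. Your only additions---making explicit the identification $\Phi^{\varphi({\bm\beta})}_\tau=\Phi_{\exp(\tau{\bm\beta})}$ and the pushforward computation using the symplectic invariance of $\lambda_\omega$ to conclude invariance of the measure $\rho_{\bm\beta}\lambda_\omega$ itself---are steps the paper leaves implicit, and they are correct.
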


\begin{proof} The orbits of the action of this subgroup on $M$ are the integral 
curves of the Hamiltonian vector field which admits the function 
$x\mapsto\bigl\langle J(x),{\bm\beta}\bigr\rangle$ as Hamiltonian. This function
is therefore constant on each orbit of this one-parameter subgroup. The expression of the
probability density $\rho_{\bm\beta}$ of the    Gibbs state indexed by $\bm\beta$
proves that this probability density too is constant on each orbit of the action of
$\bigl\{\exp(\tau{\bm\beta})\bigm|\tau\in\RR\bigr\}$.
\end{proof}

\subsection{Thermodynamic functions}
\label{subsec:3.2}
In this section, the map $\Phi:G\times M\to M$ is a Hamiltonian action of a Lie group $G$ 
on a connected symplectic manifold $(M,\omega)$ and $J:M\to{\mathfrak g}^*$ is a moment map  
of this action. It is assumed that the open subset $\Omega\subset{\mathfrak g}$ of
generalized temperatures is not empty. In \ref{DefinitionsGeneralizedTemperatures},
we have defined  the    partition function $P$,  whose expression is
 $$P({\bm\beta})=\int_M\exp\bigr(-\langle J(x), {\bm \beta}\rangle\bigr)\lambda_\omega(\d x)\,,\quad {\bm\beta}\in\Omega\,.
 $$
On the set $\Omega$ of generalized temperatures,  we define below other 
\emph{thermodynamic functions} whose expressions can be derived from 
that of $P$.

\begin{defis}
\label{DefisValMoyJetEntropie}
Assumptions and notations here are those of \ref{subsec:3.2}.
\par\noindent
{\rm 1.}\quad The \emph{mean value of the moment map} $J$ is the function,
denoted by $E_J$, defined on $\Omega$ and taking its values in the dual vector space
${\mathfrak g}^*$ of the Lie algebra $\mathfrak g$, whose expression is 
  $$    
   E_J({\bm\beta})={\mathcal E}_{\rho_{\bm\beta}}(J)
   =\frac{1}{P({\bm\beta})}\int_M J(x)\exp\bigr(-\langle J(x), 
     {\bm\beta}\rangle\bigr)\lambda_\omega(\d x)\,,\quad{\bm\beta}\in\Omega\,.
  $$
\par\noindent
{\rm 2.}\quad The \emph{entropy function} is the function, denoted by $S$, defined
on $\Omega$ and taking its values in $\RR\cup\{-\infty\}$, which associates, to each 
generalized temperature ${\bm\beta}\in \Omega$, the entropy of the    Gibbs state
indexed by $\bm\beta$:
  $$S({\bm\beta})=s(\rho_{\bm\beta})=\int_M\rho_{\bm\beta}(x)\log\left(\frac{1}{\rho_{\bm\beta}(x)}
       \right)\lambda_\omega(\d x)\,,\quad {\bm\beta}\in\Omega\,.
  $$ 
\end{defis}

\begin{prop}\label{ExpressionsFonctionsThermoGene}
For each generalized temperature ${\bm\beta}\in\Omega$, the values at $\bm\beta$
of the    thermodynamic functions mean value of $J$ and entropy (defined 
in \ref{DefisValMoyJetEntropie}) are given by the formulae
 \begin{align*} 
  E_J({\bm\beta})&=-\frac{1}{P({\bm\beta})}DP({\bm\beta})=-D(\log P)({\bm\beta})\,,\\
  S({\bm\beta})&=\log P({\bm\beta})+\bigl\langle E_J({\bm\beta}),{\bm\beta}\bigr\rangle
       =\log P({\bm\beta})-\bigl\langle D(\log P)({\bm\beta}), {\bm\beta}\bigr\rangle\,.
 \end{align*}
\end{prop}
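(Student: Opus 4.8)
The plan is to obtain both identities by differentiating the defining integral of $P$ under the integration sign, an operation licensed by Proposition~\ref{DiffPartitionFunction}, and then to substitute the resulting expressions into the definition of the entropy from \ref{DefisValMoyJetEntropie}.

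First I would differentiate $P$. For each fixed $x\in M$ the map $\bm\beta\mapsto\exp\bigl(-\langle J(x),\bm\beta\rangle\bigr)$ is smooth, and since $\bm\beta\mapsto\langle J(x),\bm\beta\rangle$ is linear its differential is $J(x)\in{\mathfrak g}^*$; hence the first-order term already computed in the proof of Proposition~\ref{DiffPartitionFunction} (the case $k=1$) reads $D\bigl(\exp(-\langle J,\bm\beta\rangle)\bigr)=-J(x)\exp\bigl(-\langle J(x),\bm\beta\rangle\bigr)$. Integrating term by term, which is exactly what that proposition permits, gives $DP(\bm\beta)=-\int_M J(x)\exp\bigl(-\langle J(x),\bm\beta\rangle\bigr)\lambda_\omega(\d x)$, an element of ${\mathfrak g}^*$. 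Dividing by $-P(\bm\beta)$ and comparing with the definition of $E_J$ yields $E_J(\bm\beta)=-\frac{1}{P(\bm\beta)}DP(\bm\beta)$; the alternative form $-D(\log P)(\bm\beta)$ then follows from the chain rule, since $D(\log P)(\bm\beta)=\frac{1}{P(\bm\beta)}DP(\bm\beta)$, where $P(\bm\beta)>0$ because the integrand is everywhere strictly positive.

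For the entropy I would start from its definition and substitute the explicit density. Writing $\rho_{\bm\beta}(x)=\frac{1}{P(\bm\beta)}\exp\bigl(-\langle J(x),\bm\beta\rangle\bigr)$ gives $\log\bigl(1/\rho_{\bm\beta}(x)\bigr)=\log P(\bm\beta)+\langle J(x),\bm\beta\rangle$, so that $S(\bm\beta)=\int_M\rho_{\bm\beta}(x)\bigl(\log P(\bm\beta)+\langle J(x),\bm\beta\rangle\bigr)\lambda_\omega(\d x)$. The first summand integrates to $\log P(\bm\beta)$ because $\rho_{\bm\beta}$ is a probability density, and in the second summand the pairing with the fixed vector $\bm\beta$ may be pulled out of the integral, leaving $\bigl\langle E_J(\bm\beta),\bm\beta\bigr\rangle$ by the definition of $E_J$. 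This establishes $S(\bm\beta)=\log P(\bm\beta)+\langle E_J(\bm\beta),\bm\beta\rangle$, and substituting $E_J(\bm\beta)=-D(\log P)(\bm\beta)$ produces the final expression.

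There is essentially no deep obstacle here: the analytic heavy lifting, namely the normal convergence of the integral defining $E_J$ and the validity of differentiation under $\int$, has already been carried out in Propositions~\ref{DiffPartitionFunction} and~\ref{MaximalityEntropy}. The only points requiring a little care are keeping track of the fact that $DP(\bm\beta)$ and $E_J(\bm\beta)$ both live in ${\mathfrak g}^*$, so that $\langle E_J(\bm\beta),\bm\beta\rangle$ is the natural duality pairing, and the exchange of this pairing with the integral in the entropy computation, which is justified by integrating componentwise in any basis of $\mathfrak g$ and using the convergence already guaranteed for $E_J$.
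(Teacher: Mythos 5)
Your proof is correct and follows essentially the same route as the paper: differentiation of $P$ under the integral sign via Proposition~\ref{DiffPartitionFunction} to obtain $DP({\bm\beta})=-P({\bm\beta})E_J({\bm\beta})$, then substitution of the explicit density $\rho_{\bm\beta}$ into the definition of the entropy and term-by-term integration. The extra care you take (positivity of $P$, componentwise integration to justify pulling the pairing out of the integral) is sound and merely makes explicit what the paper leaves implicit.
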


\begin{proof}
Proposition \ref{DiffPartitionFunction} states that the    partition 
function $P$ is of class $C^\infty$ and that its differentials of all orders 
can be obtained by differentiation under the sign $\int$. Therefore
 $$DP({\bm\beta})=-\int_MJ\exp\bigl(-\langle J(x),{\bm\beta}\rangle\bigr)\lambda_\omega(\d x)
            =-P({\bm\beta})E_J({\bm\beta})\,,
 $$
which proves the indicated expresions of $E_J(\beta)$. Since for each $x\in M$, we have 
$\displaystyle \rho_{\bm\beta}(x)=\frac{\exp\bigl(-\langle J(x),{\bm\beta}\rangle\bigr)}{P({\bm\beta})}$,
 $$\rho_{\bm\beta}(x)\log\frac{1}{\rho_{\bm\beta}(x)}
 =\frac{1}{P({\bm\beta})}\exp\bigl(-\langle J(x),{\bm\beta}\rangle\bigr)
   \bigl(\langle J(x),{\bm\beta}\rangle+\log P({\bm\beta})\bigr)\,.
 $$
By integration over $M$ of both members of this equality with respect to 
$\lambda_\omega$, we obtain the indicated expression of $S({\bm\beta})$.
\end{proof}

\begin{prop}\label{DifferentiellesFonctionsThermo}
The    thermodynamic functions $E_J$ (mean value of $J$) and $S$ (entropy) 
are of class $C^\infty$ on $\Omega$. The first differential of $E_J$ is the function,
defined on $\Omega$ and taking its values in the space of linear applications of
$\mathfrak g$ in its dual vector space ${\mathfrak g}^*$, whose expression is
  \begin{gather*}
   \bigl\langle DE_J({\bm\beta})(X),Y\bigr\rangle
   =-D^2(\log P)({\bm\beta})(X,Y)\\
   =-\frac{1}{P({\bm\beta})}\int_M\bigl\langle J(x)-E_J({\bm\beta}),X\bigr\rangle
                              \bigl\langle J(x)-E_J({\bm\beta}),Y\bigr\rangle
                              \exp\bigl(-\langle J(x),{\bm\beta}\rangle\bigr)\lambda_\omega(\d x)\,,                              
   \end{gather*}
with $X$ and $Y\in{\mathfrak g}$. For each ${\bm\beta}\in\Omega$,
 $DE_J({\bm\beta})$ can be considered as a bilinear, symmetric form on $\mathfrak g$.                              
\par\smallskip

The differential of the entropy function $S$ at each ${\bm\beta}\in\Omega$ 
is an element of ${\mathfrak g}^*$ whose expression is 
 $$\bigl\langle DS({\bm\beta}),X\bigr\rangle
  =\bigl\langle DE_J({\bm\beta})(X),{\bm\beta}\bigr\rangle\,,\quad X\in{\mathfrak g}\,.
 $$ 
\end{prop}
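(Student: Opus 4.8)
The plan is to derive everything from the two facts already in hand: that $P$ is of class $C^\infty$ with all its differentials obtainable by differentiation under the sign $\int$ (Proposition \ref{DiffPartitionFunction}), and the closed expressions $E_J=-D(\log P)$ and $S=\log P-\bigl\langle D(\log P),{\bm\beta}\bigr\rangle$ furnished by Proposition \ref{ExpressionsFonctionsThermoGene}. Smoothness comes first: since the integrand $\exp\bigl(-\langle J(x),{\bm\beta}\rangle\bigr)$ is everywhere strictly positive and $\lambda_\omega$ is a positive measure, $P$ takes only strictly positive values, so $\log P$ is $C^\infty$ on $\Omega$. Consequently $E_J=-D(\log P)$ is $C^\infty$, and $S$, being built from $\log P$, its first differential, and the smooth map ${\bm\beta}\mapsto{\bm\beta}$, is $C^\infty$ as well.

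Next I would compute $DE_J$. From $E_J=-D(\log P)$ one gets at once $DE_J=-D^2(\log P)$, which is the first claimed equality. To reach the integral formula I expand the second differential of a logarithm by the chain rule:
 $$D^2(\log P)({\bm\beta})(X,Y)=\frac{D^2P({\bm\beta})(X,Y)}{P({\bm\beta})}-\frac{DP({\bm\beta})(X)\,DP({\bm\beta})(Y)}{P({\bm\beta})^2}\,.$$
Differentiating under the integral (legitimate by Proposition \ref{DiffPartitionFunction}) gives $DP({\bm\beta})(X)=-\int_M\langle J(x),X\rangle\exp\bigl(-\langle J(x),{\bm\beta}\rangle\bigr)\lambda_\omega(\d x)$ and a second derivative $D^2P({\bm\beta})(X,Y)$ whose integrand is $\langle J(x),X\rangle\langle J(x),Y\rangle\exp\bigl(-\langle J(x),{\bm\beta}\rangle\bigr)$. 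Dividing by $P({\bm\beta})$ and recognizing that $\frac{1}{P({\bm\beta})}\int_M\langle J(x),X\rangle\exp\bigl(-\langle J(x),{\bm\beta}\rangle\bigr)\lambda_\omega(\d x)=\langle E_J({\bm\beta}),X\rangle$, the two terms assemble into a covariance. Expanding the centered product $\langle J-E_J,X\rangle\langle J-E_J,Y\rangle$ and using $\frac{1}{P({\bm\beta})}\int_M\exp\bigl(-\langle J(x),{\bm\beta}\rangle\bigr)\lambda_\omega(\d x)=1$ recasts $D^2(\log P)({\bm\beta})(X,Y)$ exactly as the centered integral of the statement, so $DE_J({\bm\beta})(X,Y)$ equals its negative. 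Symmetry is then immediate, since the integrand $\langle J-E_J,X\rangle\langle J-E_J,Y\rangle$ is a product of scalars and hence symmetric in $X$ and $Y$ (equivalently, $D^2(\log P)$ is symmetric by Schwarz's theorem).

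Finally, for the differential of $S$ I would differentiate the relation $S=\log P+\bigl\langle E_J,{\bm\beta}\bigr\rangle$. The product rule applied to the pairing yields $D_X\bigl\langle E_J({\bm\beta}),{\bm\beta}\bigr\rangle=\bigl\langle DE_J({\bm\beta})(X),{\bm\beta}\bigr\rangle+\langle E_J({\bm\beta}),X\rangle$, while $D(\log P)({\bm\beta})(X)=-\langle E_J({\bm\beta}),X\rangle$. Adding these, the two terms $\pm\langle E_J({\bm\beta}),X\rangle$ cancel, leaving $\bigl\langle DS({\bm\beta}),X\bigr\rangle=\bigl\langle DE_J({\bm\beta})(X),{\bm\beta}\bigr\rangle$, as desired. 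I do not expect a genuine analytic obstacle here: the only subtle point, the interchange of differentiation and integration, is already granted by Proposition \ref{DiffPartitionFunction}. The real work is the algebraic repackaging of $D^2(\log P)$ as a covariance and the careful application of the product rule to the pairing $\bigl\langle E_J({\bm\beta}),{\bm\beta}\bigr\rangle$, where the cancellation of the $\langle E_J({\bm\beta}),X\rangle$ terms is the step most easily mishandled.
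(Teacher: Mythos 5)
Your proposal is correct and follows essentially the same route as the paper: both derive $DE_J=-D^2(\log P)$ from $E_J=-D(\log P)$, compute $DP$ and $D^2P$ by differentiation under the integral sign as licensed by Proposition \ref{DiffPartitionFunction}, repackage the result as the centered (covariance) integral using $\frac{1}{P({\bm\beta})}\int_M\langle J(x),X\rangle\exp\bigl(-\langle J(x),{\bm\beta}\rangle\bigr)\lambda_\omega(\d x)=\langle E_J({\bm\beta}),X\rangle$, and obtain $DS$ by applying the product rule to $S=\log P+\bigl\langle E_J({\bm\beta}),{\bm\beta}\bigr\rangle$ with the cancellation of the $\langle E_J({\bm\beta}),X\rangle$ terms. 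Your only cosmetic deviation is invoking the closed chain-rule formula $D^2(\log P)=\frac{D^2P}{P}-\frac{DP\otimes DP}{P^2}$ where the paper differentiates $D(\log P)=DP/P$ a second time directly, and your explicit remarks on $P>0$ and Schwarz symmetry are harmless additions.
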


\begin{proof}
According to \ref{ExpressionsFonctionsThermoGene}, for each ${\bm\beta}\in\Omega$,
$E_J({\bm\beta})=-D(\log P)({\bm\beta})$. Therefore, for all $X$ and 
$Y\in{\mathfrak g}$,
 $$\bigl\langle DE_J({\bm\beta})(X),Y\bigr\rangle
   =-D^2(\log P)({\bm\beta})(X,Y)\,,
 $$
which shows that $DE_J({\bm\beta})$ can be considered as a bilinear, symmetric 
form on $\mathfrak g$. Since $DP({\bm\beta})$ can be obtained by differentiation 
under the sign $\int$,
 $$D(\log P)({\bm\beta})=\frac{1}{P({\bm\beta})}DP({\bm\beta})=-\frac{1}{P({\bm\beta})}
     \int_MJ(x)\exp\bigl(-\langle J(x),{\bm\beta}\rangle\bigr)\lambda_\omega(\d x)\,.
 $$ 
By a second differentiation under the sign $\int$, we therefore obtain, for all
$X$ and $Y\in{\mathfrak g}$,
 \begin{gather*}
  D^2(\log P)({\bm\beta})(X,Y)
  =\frac{1}{P({\bm\beta})}\int_M
      \langle J(x),X\rangle\langle J(x),Y\rangle
       \exp\bigl(-\langle J(x),{\bm\beta}\rangle\bigr)\lambda_\omega(\d x)\\
  \quad+\frac{1}{\bigl(P({\bm\beta})\bigr)^2}DP({\bm\beta})(Y)
          \int_M\langle J(x),X\rangle\exp\bigl(-\langle J(x),{\bm\beta}
           \rangle\bigr)\lambda_\omega(\d x)\,.
 \end{gather*}
Let us replace $DP({\bm\beta})(Y)$ 
and $\int_M\langle J(x),X\rangle\exp\bigl(-\langle J(x),{\bm\beta}
           \rangle\bigr)\lambda_\omega(\d x)$, in the right hand side of this 
equality, by their expressions
 \begin{align*}
  DP({\bm\beta})(Y)
  &=-P({\bm\beta})\bigl\langle E_J({\bm\beta}),Y\bigr\rangle\,,\\
  \int_M\langle J(x),X\rangle\exp\bigl(-\langle J(x),{\bm\beta}
           \rangle\bigr)\lambda_\omega(\d x)
  &=P({\bm\beta})\bigl\langle E_J({\bm\beta}, X\bigr\rangle\,.
 \end{align*}
We obtain
 \begin{align*}
  D^2(\log P)({\bm\beta})(X,Y)
  &=\frac{1}{P({\bm\beta})}\int_M
      \langle J(x),X\rangle\langle J(x),Y\rangle
       \exp\bigl(-\langle J(x),{\bm\beta}\rangle\bigr)\lambda_\omega(\d x)\\
  &\quad-\bigl\langle E_J({\bm\beta}),Y\bigr\rangle
          \bigl\langle E_J({\bm\beta}, X\bigr\rangle\\
  &=\frac{1}{P({\bm\beta})}\int_M
     \bigl\langle J(x)-E_J({\bm\beta}),X\bigr\rangle\bigl\langle J(x)-E_J({\bm\beta}),Y\bigr\rangle\\
  &\quad\quad\quad\quad\exp\bigl(-\langle J(x),{\bm\beta}\rangle\bigr)\lambda_\omega(\d x)\,,     
 \end{align*}           
The expression of $\bigl\langle DE_J({\bm\beta})(X),Y\bigr\rangle$ given 
in the statement follows.
\par\smallskip

By differentiation of the expression of $S({\bm\beta})$ given in
\ref{ExpressionsFonctionsThermoGene}, and using the equality
$DP({\bm\beta})=-P({\bm\beta})E_J({\bm\beta})$, we obtain, for any $X\in{\mathfrak g}$,
 \begin{align*}
  \bigl\langle DS({\bm\beta}),X\bigr\rangle
  &=\frac{1}{P({\bm\beta})}DP({\bm\beta})(X)+
       \bigl\langle E_J({\bm\beta}),X\bigr\rangle+\bigl\langle DE_J({\bm\beta})(X),{\bm\beta}\bigr\rangle\\
  &=\bigl\langle DE_J({\bm\beta})(X),{\bm\beta}\bigr\rangle\,.\qedhere
 \end{align*}        
\end{proof}

\begin{theo}\label{EJDiffeo}
For all ${\bm\beta}\in \Omega$, $X$ and $Y\in{\mathfrak g}$, let
 $$\Gamma({\bm\beta})(X,Y)=-\bigl\langle DE_J({\bm\beta})(X),Y\bigr\rangle 
                    =D^2(\log P)({\bm\beta})(X,Y)\,.
 $$
The map $\Gamma$ so defined is a $C^\infty$ bilinear, symmetric differential form defined on
$\Omega$ such that, for each ${\bm\beta}\in\Omega$ and $X\in{\mathfrak g}$,
 $$\Gamma({\bm\beta})(X,X)\geq 0\,.
 $$
Moreover, if $X\in{\mathfrak g}$ is such that $x\mapsto \bigl\langle J(x),X\bigr\rangle$ 
is not a constant function,
 $$\Gamma({\bm\beta})(X,X)>0\,.
 $$
When, in addition, the Hamiltonian
action $\Phi:G\times M\to M$ is effective (it means that for any $X\in{\mathfrak g}$,
$X\neq 0$, the function $x\mapsto  \bigl\langle J(x),X\bigr\rangle$ is not a constant on $M$), 
$\Gamma$ is a Riemannian metric on $\Omega$. Moreover, the map
$E_J:\Omega\to{\mathfrak g}^*$ is injective, its image is an open subset $\Omega^*$
of ${\mathfrak g}^*$, and considered as valued in $\Omega^*$, $E_J$ is a diffeomorphism 
of the set $\Omega$ of generalized temperatures onto the open subset $\Omega^*$ of ${\mathfrak g}^*$.                          
\end{theo}

\begin{proof}
The firt assertions follow from the the expression of $\bigl\langle DE_J({\bm\beta})(X),Y\bigr\rangle$ 
given in Proposition \ref{DifferentiellesFonctionsThermo}.
When $X\in{\mathfrak g}$ is such that the function $x\mapsto \bigl\langle J(x),X\bigr\rangle$
is not a constant on $M$, the function 
 $$x\mapsto \bigl\langle J(x)-E_J({\bm\beta}), X\bigr\rangle^2\exp\bigl(-\langle J,{\bm\beta}\rangle\bigr)
 $$
is continuous, with values $\geq 0$ and not everywhere equal to $0$ on $M$. 
Its integral with respect to the Liouville measure is therefore strictly positive, 
which proves that $\Gamma({\bm\beta})(X,X)>0$. 
\par\smallskip

When, in addition, the action $\Phi$ effective, for any ${\bm\beta}\in \Omega$ 
and any non-zero  $X\in{\mathfrak g}$, $\Gamma({\bm\beta})(X,X)>0$. The map $\Gamma$
is therefore an Riemannian metric on $\Omega$. For all
${\bm\beta}\in\Omega$ and $Y\in{\mathfrak g}\backslash\{0\}$, we have 
$\bigl\langle DE_J({\bm\beta})(Y),Y\bigr\rangle<0$, which implies that 
$DE_J({\bm\beta})$ is invertible. The map $E_J:\Omega\to{\mathfrak g}^*$ is therefore
open. This map cannot take the same value at two distinct points  
${\bm\beta}_1$ an ${\bm\beta}_2\in\Omega$, since this would imply
 $$\bigl\langle E_J({\bm\beta}_1),{\bm\beta}_2-{\bm\beta}_1\bigr\rangle=
    \bigl\langle E_J({\bm\beta}_2),{\bm\beta}_2-{\bm\beta}_1\bigr\rangle\,.
 $$
The real-valued function
 $$\lambda\mapsto \Bigl\langle E_J\bigl((1-\lambda){\bm\beta}_1+\lambda{\bm\beta}_2\bigr), 
    {\bm\beta}_2-{\bm\beta}_1\Bigr\rangle\,,\quad\lambda\in[0,1]\,,
 $$
would be well defined on $[0,1]$ since $\Omega$ is convex, smooth in $]0,1[$, and would take the
same value for $\lambda=0$ and $\lambda=1$.
Its derivative with respect to $\lambda$, whose value is
$\Bigl\langle DE_J\bigl(\lambda {\bm\beta}_1
  +(1-\lambda){\bm\beta}_2\bigr)({\bm\beta}_2-{\bm\beta}_1),({\bm\beta}_2-{\bm\beta}_1)
   \Bigr\rangle$
would vanish fore some $\lambda\in]0,1[$,
which would contradict the effectiveness of $\Phi$. Being open and injective,
the map $E_J:\Omega\to{\mathfrak g}^*$ is a diffeomorphism of $\Omega$ onto its image
$\Omega^*$, which is an open subset of ${\mathfrak g}^*$.
\end{proof}

\begin{remas}\label{EJDiffeoRmks} Theorem \ref{EJDiffeo} leads to the following observations.

\par\smallskip\noindent
{\rm 1.}\quad In the language of Probability theory,
$-\bigl\langle DE_J({\bm\beta})(X),X\bigr\rangle$ is the \emph{variance}, in other words
the square of the \emph{standard deviation} of the random variable $\langle J,X\rangle$,
for the probability law $\rho_{\bm\beta}\lambda_\omega$ on $M$. 

\par\smallskip\noindent
{\rm 2.}\quad For each generalized temperature ${\bm\beta}\in\Omega$, the    Gibbs state 
indexed by ${\bm\beta}$ is the probability law on $M$, absolutely continuous with respect
to the Liouville measure $\lambda_\omega$, of probability density
 $$\rho_{\bm\beta}=\frac{1}{P({\bm\beta})}\exp\bigl(-\langle J,{\bm\beta}\rangle\bigr)\,.
 $$
The open subset $\Omega$ of $\mathfrak g$, in which live the generalized temperatures 
${\bm\beta}$ which index a familly of probability laws defined on $M$, 
is called by statisticians a \emph{statistical manifold}. The \emph{Fisher-Rao metric},
so named in honour of the British statistician and genetician Ronald Aylmer Fisher (1890--1962)
and the Indian statistician Calyampudi Radhakrishna Rao (born in 1920, emeritus 
Professor at the Indian Statistics Institute and at the Pennsylvania State University)
is a Riemannian metric, defined on some statistical manifolds, which is used to
evaluate the distance between probability laws. Frédéric Barbaresco
\cite{Barbaresco2016} observed that the Riemannian metric $\Gamma$ defined by
Jean-Marie Souriau on $\Omega$ is nothing else than the Fisher-Rao metric when, 
as indicated above, $\Omega$ is considered as a statistical manifold.

\par\smallskip\noindent
{\rm 3.}\quad Under the assumptions of \ref{EJDiffeo}, the equality
 $$S({\bm\beta})=\bigl\langle D(-\log P)({\bm\beta}), {\bm\beta}\bigr\rangle-(-\log P)({\bm\beta})
 $$
proves that each of the two functions  $-\log P:\Omega\to\RR$ and 
$S\circ E_J^{-1}:\Omega^*\to\RR$ is the \emph{Legendre transform}
of the other, just as a hyper-regular Lagrangian $L:TM\to\RR$ and the associated
Hamiltonian $H:T^*M\to\RR$, defined, respectively, on the tangent bundle $TM$ 
and on the cotangent bundle $T^*M$ to some smooth manifold $M$. Here the Legendre map
is $E_J:\Omega\to{\Omega}^*$. This map and its inverse $(E_J)^{-1}:\Omega^*\to\Omega$ 
are expressed by formulae similar to those which express the Legendre map
$TM\to T^*M$ and its inverse $T^*M\to TM$ in calculus of variations,
 $$E_J=D(-\log P)\,,\quad (E_J)^{-1}=D(S\circ {E_J}^{-1})\,.
 $$  
\end{remas} 

The moment map $J$ of the Hamiltonian action $\Phi$ is not unique: it is well 
known that for any constant $\mu\in{\mathfrak g}$, $J+\mu$ is too a moment map of 
$\Phi$. Proposition \ref{EffetChangementJ} below indicates the effet of such a change 
on the    thermodynamic functions $P$, $E_J$ and $S$.

\begin{prop}\label{EffetChangementJ}
Let  $\mu\in{\mathfrak g}^*$ be a constant. When the moment map $J$ of the 
Hamiltonian action $\Phi$ is replaced by $J_1=J+\mu$, the set $\Omega$ of    
generalized temperatures does not change. The    thermodynamic functions
$P$, $E_J$ and $S$ are replaced, respectively, by $P_1$, $E_{J_1}$ and $S_1$, whose expressions are
 $$P_1({\bm\beta})=\exp\bigl(-\langle \mu,{\bm\beta}\rangle\bigr)P({\bm\beta})\,,\quad 
   E_{J_1}({\bm\beta})=E_J({\bm\beta})+\mu\,,\quad  S_1({\bm\beta})=S({\bm\beta})\,.
 $$
For each ${\bm\beta}\in\Omega$, the associated    Gibbs state, its probability
density $\rho_{\bm\beta}$ with respect to the Liouville measure $\lambda_\omega$ and the
bilinear, symmetric form $\Gamma$ (theorem \ref{EJDiffeo}) are not changed.
\end{prop}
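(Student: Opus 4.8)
The plan is to trace how the constant shift $J\mapsto J_1=J+\mu$ propagates through each of the defining integrals. The decisive observation is that for every $x\in M$ one has $\langle J_1(x),{\bm\beta}\rangle=\langle J(x),{\bm\beta}\rangle+\langle\mu,{\bm\beta}\rangle$, and the extra term $\langle\mu,{\bm\beta}\rangle$ is independent of $x$; hence it factors out of any integral over $M$ as the multiplicative constant $\exp\bigl(-\langle\mu,{\bm\beta}\rangle\bigr)$. The invariance of $\Omega$ itself requires no new work: it was already noted in the proof of Proposition \ref{DiffPartitionFunction} that replacing $J$ by $J+\mu$ does not affect the normal convergence of the integral $(*)$, precisely because $\exp\bigl(-\langle J_1(x),{\bm\beta}'\rangle\bigr)$ differs from $\exp\bigl(-\langle J(x),{\bm\beta}'\rangle\bigr)$ only by a factor constant in $x$.

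First I would compute the partition function by pulling this constant factor out of the integral, obtaining $P_1({\bm\beta})=\exp\bigl(-\langle\mu,{\bm\beta}\rangle\bigr)P({\bm\beta})$. Next I would substitute this into the probability density of Definition \ref{GibbsState}:
$$\rho_{1,{\bm\beta}}(x)=\frac{\exp\bigl(-\langle J_1(x),{\bm\beta}\rangle\bigr)}{P_1({\bm\beta})}=\frac{\exp\bigl(-\langle\mu,{\bm\beta}\rangle\bigr)\exp\bigl(-\langle J(x),{\bm\beta}\rangle\bigr)}{\exp\bigl(-\langle\mu,{\bm\beta}\rangle\bigr)P({\bm\beta})}=\rho_{\bm\beta}(x)\,,$$
so the constant factor cancels between numerator and denominator and the density is literally unchanged. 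Since both the Gibbs state and its entropy depend only on this density, it follows at once that the Gibbs state is the same and that $S_1({\bm\beta})=s(\rho_{1,{\bm\beta}})=s(\rho_{\bm\beta})=S({\bm\beta})$.

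The remaining two claims follow from the unchanged density. For the mean value I would invoke the linearity of ${\mathcal E}_{\rho_{\bm\beta}}$ together with the normalization $\int_M\rho_{\bm\beta}(x)\lambda_\omega(\d x)=1$, which gives $E_{J_1}({\bm\beta})={\mathcal E}_{\rho_{\bm\beta}}(J+\mu)=E_J({\bm\beta})+\mu$. For $\Gamma$ there are two equivalent routes: either observe that $\log P_1=\log P-\langle\mu,\,\cdot\,\rangle$ differs from $\log P$ by a term affine in ${\bm\beta}$, whose second differential vanishes, so that $D^2(\log P_1)({\bm\beta})=D^2(\log P)({\bm\beta})$ and hence $\Gamma$ is unchanged; or note that the centered moment map is invariant, $J_1-E_{J_1}({\bm\beta})=J-E_J({\bm\beta})$, so that the integral expression for $\Gamma$ in Proposition \ref{DifferentiellesFonctionsThermo}, involving only this centered quantity and the unchanged density, is unaffected. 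No step presents a genuine obstacle; the only point worth flagging is the elementary but essential fact that $\langle\mu,{\bm\beta}\rangle$ is affine in ${\bm\beta}$, which is exactly what makes it invisible to the second-order object $\Gamma$ while still shifting the first-order object $E_J$.
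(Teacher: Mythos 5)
Your proposal is correct and follows exactly the same route as the paper, whose entire proof consists of the single factorization identity $\exp\bigl(-\langle J+\mu,{\bm\beta}\rangle\bigr)=\exp\bigl(-\langle\mu,{\bm\beta}\rangle\bigr)\exp\bigl(-\langle J,{\bm\beta}\rangle\bigr)$; you have simply spelled out, correctly, how that constant factor propagates through $P$, cancels in $\rho_{\bm\beta}$, shifts $E_J$ by $\mu$, and disappears from $S$ and from the second-order object $\Gamma$.
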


\begin{proof} The stated results follow from the equality
 $$\exp\bigl(-\langle J+\mu, {\bm\beta}\rangle\bigr)=
    \exp\bigl(-\langle\mu,{\bm\beta}\rangle\bigr)
     \exp\bigl(-\langle J,{\bm\beta}\rangle\bigr)\,.\eqno{\qedhere}
 $$
\end{proof}

\subsection{Generalized temperatures and adjoint action}
\label{subsec:3.3}

As in the previous section, $\Phi:G\times M\to M$ is a Hamiltonian action of a connected Lie
group $G$ on a connected symplectic manifold $(M,\omega)$ and $J:M\to{\mathfrak g}^*$
is a moment map of this action. The set of generalized temperatures is assumed to be
a non-empty subset $\Omega$ of the Lie algebra $\mathfrak g$. As seen in 
\ref{EffetChangementJ}, $\Omega$ does not depend on the choice of the moment map $J$.
We moreover assume that $\Phi$ is effective, which implies (theorem \ref{EJDiffeo})
that $E_J$ is a diffeomorphism of $\Omega$ onto an open subset $\Omega^*$ of 
${\mathfrak g}^*$, and that the bilinear, symmetric form 
$\Gamma$ is a Riemannian metric on $\Omega$. 
By considering the adjoint action of $G$ on $\Omega$, we prove below that $\Omega$ is a union of 
adjoint orbits (proposition \ref{ActionAdjointeSurPetEJetS}) and that the 
Riemannian metric induced by $\Gamma$ on each of 
these orbits can be expressed in terms of a symplectic cocycle of the 
Lie algebra $\mathfrak g$ (theorem \ref{ExpressionGammaAdjointe}).
\par\smallskip

The next proposition proves that $\Omega$ is a union of adjoint orbits and indicates
the variations of the    thermodynamic functions
$P$, $E_J$ and $S$ on each adjoint orbit contained in $\Omega$.

\begin{prop}\label{ActionAdjointeSurPetEJetS}
The set $\Omega$ of generalized temperatures is a union of orbits of the adjoint 
action of the Lie group $G$ on its Lie algebra $\mathfrak g$. Let 
$\theta:G\to{\mathfrak g}^*$ be the symplectic cocycle of $G$ 
(see, for example, \cite{Marle2018}) such that, for each $g\in G$
 $$ J\circ\Phi_g=\Ad^*_{g^{-1}}\circ\, J+\theta(g)\,.
 $$ 
For any ${\bm\beta}\in\Omega$ and any $g\in G$, we have
 \begin{align*} 
       P(\Ad_g{\bm\beta})  &=\exp\Bigl(\bigl\langle\theta(g^{-1}),{\bm\beta}\bigr\rangle\Bigr)P({\bm\beta})
                   =\exp\Bigl(-\bigl\langle\Ad^*_g\theta(g),{\bm\beta}\bigr\rangle\Bigr)P({\bm\beta})\,,\\
       E_J(\Ad_g{\bm\beta})&=\Ad^*_{g^{-1}}E_J({\bm\beta})+\theta(g)\,,\\
       S(\Ad_g{\bm\beta})  &=S({\bm\beta})\,. 
 \end{align*}
\end{prop}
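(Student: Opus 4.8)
The plan is to reduce everything to a single change-of-variables computation for $P$, and then obtain the formulae for $E_J$ and $S$ from it by linear algebra, using only the equivariance relation $J\circ\Phi_g=\Ad^*_{g^{-1}}\circ J+\theta(g)$ and the invariance of the Liouville measure under the symplectomorphism $\Phi_g$ (\ref{LiouvilleMeasure}). Throughout I would use the duality convention $\langle\Ad^*_g\xi,X\rangle=\langle\xi,\Ad_gX\rangle$, for which $g\mapsto\Ad^*_g$ is an anti-homomorphism, so that $\Ad^*_{g^{-1}}$ is the coadjoint left action appearing in the equivariance relation.

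First I would rewrite the pairing in the integrand of $P(\Ad_g{\bm\beta})$. Replacing $g$ by $g^{-1}$ in the equivariance relation gives $\Ad^*_g J(x)=J(\Phi_{g^{-1}}(x))-\theta(g^{-1})$, hence
$$\langle J(x),\Ad_g{\bm\beta}\rangle=\langle\Ad^*_gJ(x),{\bm\beta}\rangle=\langle J(\Phi_{g^{-1}}(x)),{\bm\beta}\rangle-\langle\theta(g^{-1}),{\bm\beta}\rangle.$$
Substituting into $P(\Ad_g{\bm\beta})=\int_M\exp(-\langle J(x),\Ad_g{\bm\beta}\rangle)\lambda_\omega(\d x)$, I would pull the constant factor $\exp(\langle\theta(g^{-1}),{\bm\beta}\rangle)$ out of the integral and change variables by $y=\Phi_{g^{-1}}(x)$. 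Since $\Phi_g$ is a symplectomorphism, $\lambda_\omega$ is invariant and the remaining integral is exactly $P({\bm\beta})$, giving $P(\Ad_g{\bm\beta})=\exp(\langle\theta(g^{-1}),{\bm\beta}\rangle)P({\bm\beta})$. In particular the integral for $\Ad_g{\bm\beta}$ is normally convergent whenever it is for ${\bm\beta}$, since $\Phi_g$ transports a dominating integrable function to a dominating integrable function; thus $\Omega$ is stable under the adjoint action, i.e. a union of adjoint orbits. The second form of the exponential follows from the cocycle identity $\theta(gh)=\theta(g)+\Ad^*_{g^{-1}}\theta(h)$: setting $h=g^{-1}$ and using $\theta(e)=0$ yields $\theta(g^{-1})=-\Ad^*_g\theta(g)$.

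For $E_J$ I would run the same substitution in the integral defining $E_J(\Ad_g{\bm\beta})$. The scalar factor $\exp(\langle\theta(g^{-1}),{\bm\beta}\rangle)$ now cancels against the one produced in $P(\Ad_g{\bm\beta})$, and after the change of variables the integrand carries $J(\Phi_g(y))$. Replacing this by $\Ad^*_{g^{-1}}J(y)+\theta(g)$ via equivariance and using the linearity of $\Ad^*_{g^{-1}}$ to move it outside the integral, the two resulting integrals are $P({\bm\beta})E_J({\bm\beta})$ and $\theta(g)P({\bm\beta})$, giving at once $E_J(\Ad_g{\bm\beta})=\Ad^*_{g^{-1}}E_J({\bm\beta})+\theta(g)$. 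Alternatively one can differentiate the logarithm of the $P$-identity and invoke $E_J=-D(\log P)$ from \ref{ExpressionsFonctionsThermoGene}, but the direct integral computation is cleaner.

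Finally, for $S$ I would use the identity $S({\bm\beta})=\log P({\bm\beta})+\langle E_J({\bm\beta}),{\bm\beta}\rangle$ from \ref{ExpressionsFonctionsThermoGene} and insert the two transformation laws just obtained. The term $\langle E_J(\Ad_g{\bm\beta}),\Ad_g{\bm\beta}\rangle$ splits into $\langle\Ad^*_{g^{-1}}E_J({\bm\beta}),\Ad_g{\bm\beta}\rangle=\langle E_J({\bm\beta}),{\bm\beta}\rangle$ and $\langle\theta(g),\Ad_g{\bm\beta}\rangle=\langle\Ad^*_g\theta(g),{\bm\beta}\rangle=-\langle\theta(g^{-1}),{\bm\beta}\rangle$; this last term exactly cancels the $\langle\theta(g^{-1}),{\bm\beta}\rangle$ coming from $\log P(\Ad_g{\bm\beta})$, leaving $S(\Ad_g{\bm\beta})=S({\bm\beta})$. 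The main thing to watch is keeping the duality convention for $\Ad^*$ and its anti-homomorphism property consistent at every step, and, for the \emph{union of orbits} claim, verifying that normal convergence (not merely convergence) is preserved under $\Phi_g$; everything else is bookkeeping with the cocycle identity.
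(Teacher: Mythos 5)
Your proof is correct, and for the $P$- and $S$-formulae it coincides with the paper's own argument: the same rewriting $\Ad^*_gJ(x)=J\circ\Phi_{g^{-1}}(x)-\theta(g^{-1})$, the same change of variables $y=\Phi_{g^{-1}}(x)$ using the invariance of $\lambda_\omega$ under symplectomorphisms, the same identity $\theta(g^{-1})=-\Ad^*_g\theta(g)$ for the second form of the exponential, and the same cancellation of the cocycle terms in $S(\Ad_g{\bm\beta})=\log P(\Ad_g{\bm\beta})+\bigl\langle E_J(\Ad_g{\bm\beta}),\Ad_g{\bm\beta}\bigr\rangle$. The one genuine divergence is the middle formula: the paper obtains $E_J(\Ad_g{\bm\beta})=\Ad^*_{g^{-1}}E_J({\bm\beta})+\theta(g)$ by differentiating the transformation law of $\log P$, via the device $\Ad_g{\bm\beta}+s{\bm\delta}=\Ad_g\bigl({\bm\beta}+s\Ad_{g^{-1}}{\bm\delta}\bigr)$ together with $E_J=-D(\log P)$ from \ref{ExpressionsFonctionsThermoGene}, whereas you substitute directly in the integral defining $E_J(\Ad_g{\bm\beta})$ and use equivariance of $J$ plus the linearity of $\Ad^*_{g^{-1}}$ to split the integral. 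Your route is arguably cleaner and makes the equivariance of the mean value transparent (this is exactly the remark following the proposition in the paper); what it quietly uses is the convergence of the ${\mathfrak g}^*$-valued integral at $\Ad_g{\bm\beta}$, which is legitimate because you have already established $\Ad_g{\bm\beta}\in\Omega$, so Proposition \ref{DiffPartitionFunction} (or \ref{MaximalityEntropy}) applies; the paper's route instead leans only on the already-proven smoothness of $P$ and differentiation under the integral sign. One small point worth making fully explicit in your orbit-stability argument: the transported dominating function is $f\circ\Phi_{g^{-1}}$ multiplied by $\exp\bigl(\langle\theta(g^{-1}),{\bm\beta}'\rangle\bigr)$, which depends on ${\bm\beta}'$, so you must shrink to a bounded neighbourhood of ${\bm\beta}$ on which this continuous factor is bounded by a constant before concluding normal convergence at $\Ad_g{\bm\beta}$. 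The paper is equally terse here (\lq\lq by reversing the above calculation step by step\rq\rq), so this is a matter of polish, not a gap.
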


\begin{proof}
Let us assume that the integral which defines $P(\Ad_g{\bm\beta})$ is 
convergent. We can write
 \begin{align*}
  P(\Ad_g{\bm\beta})&=\int_M\exp\bigl(-\langle J(x),\Ad_g{\bm\beta}\rangle\bigr)\lambda_\omega(\d x)\\
            &=\int_M\exp\bigl(-\langle \Ad^*_g J(x),{\bm\beta}\rangle\bigr)\lambda_\omega(\d x)\\
           &=\int_M\exp\Bigl(-\bigl\langle J\circ\Phi_{g^{-1}}(x)-\theta(g^{-1}),{\bm\beta}\bigr\rangle\Bigr)
              \lambda_\omega(\d x)\\
           &=\exp\Bigl(\bigl\langle\theta(g^{-1}),{\bm\beta}\bigr\rangle\Bigr)
              \int_M\exp\Bigl(-\bigl\langle J\circ\Phi_{g^{-1}}(x),{\bm\beta}\bigr\rangle\Bigr)
              \lambda_\omega(\d x)\,.           
 \end{align*}
The change of integration variable $y=\Phi_{g^{-1}}(x)$ in the last integral leads to
 $$\int_M\exp\Bigl(-\bigl\langle J\circ\Phi_{g^{-1}}(x),{\bm\beta}\bigr\rangle\Bigr)
              \lambda_\omega(\d x)
 =\int_M\exp\Bigl(-\bigl\langle J(y),{\bm\beta}\bigr\rangle\Bigr)
              \Phi_g^*\lambda_\omega(\d y)=P({\bm\beta})\,,
 $$
since $\Phi_g^*\lambda_\omega=\lambda_\omega$, the Liouville measure being invariant
by symplectomophisms. Moreover, $\theta(g^{-1})=-\Ad^*_g\theta(g)$ (see for example \cite{Marle2018}), so we can write
 $$P(\Ad_g{\bm\beta})=\exp\Bigl(-\bigl\langle\Ad^*_g\theta(g),{\bm\beta}\bigr\rangle\Bigr)P({\bm\beta})\,.
 $$
By reversing the above calculation step by step, we prove that the normal 
convergence of the integral which defines $P({\bm\beta})$ implies the normal 
convergence of the integral which defines $P(\Ad_g{\bm\beta})$. We therefore 
have proven that $\Omega$ is a union of adjoint orbits of $G$, as well as the
expression of $P(\Ad_g{\bm\beta})$ in terms of $P({\bm\beta})$ and $\theta$ 
given in the statement.
\par\smallskip

Since $E_J({\bm\beta})=-D(\log P)({\bm\beta})$, $E_J(\Ad_g{\bm\beta})=-D(\log P)(\Ad_g{\bm\beta})$.
To calculate the right hand side of this equality, we observe that for each
${\bm \delta}\in{\mathfrak g}$ and each real $s$,
 \begin{align*}
  D(\log P)(\Ad_g{\bm\beta})({\bm\delta})
  &=\frac{\d}{\d s}\bigl(\log P(\Ad_g{\bm\beta}+s{\bm\delta})\bigr)\bigm|_{s=0}\\
  &=\frac{\d}{\d s}\Bigl(\log P\bigl(\Ad_g({\bm\beta}+s\Ad_{g^{-1}}{\bm\delta}\bigr)\Bigr)\bigm|_{s=0}
 \end{align*}
Using the expression of $P(\Ad_g{\bm\beta})$ obtained above, we have
 $$\log P\bigl(\Ad_g({\bm\beta}+s\Ad_{g^{-1}}{\bm\delta})\bigr)
  =-\langle\Ad^*_g\theta(g),{\bm\beta}+s\Ad_{g^{-1}}{\bm\delta}\rangle+\log P({\bm\beta}+s\Ad_{g^{-1}}{\bm\delta})\,.
 $$
Taking the derivative with respect to $s$, then  setting $s=0$, we get
 \begin{align*}
 D\log P(\Ad_g{\bm\beta})({\bm\delta})
 &=-\bigl\langle\Ad^*_g\theta(g),\Ad_{g^{-1}}{\bm\delta}\bigr\rangle
   +D\log P({\bm\beta})(\Ad_{g^{-1}}{\bm\delta})\\
 &=-\bigl\langle\theta(g),{\bm\delta}\bigr\rangle+D\log(P)({\bm\beta})(\Ad_{g^{-1}}{\bm\delta})\\
 &=-\bigl\langle\theta(g)+\Ad^*_{g^{-1}}E_J({\bm\beta}),{\bm\delta}\bigr\rangle\,,
 \end{align*}
where we have used the already obtained equality $D\log P({\bm\beta})=-E_J({\bm\beta})$.
Therefore,
 $$E_J(\Ad_g{\bm\beta})=\Ad^*_{g^{-1}}E_J({\bm\beta})+\theta(g)\,.
 $$
Finally,
 \begin{align*}
 S(\Ad_g{\bm\beta})
  &=\log P(\Ad_g{\bm\beta})-\bigl\langle D\log P(\Ad_g{\bm\beta}),\Ad_g{\bm\beta}\bigr\rangle\\
  &=-\bigl\langle\Ad^*_g\theta(g),{\bm\beta}\bigr\rangle+\log P({\bm\beta})
     +\bigl\langle \Ad^*_{g^{-1}}E_J({\bm\beta})+\theta(g),\Ad_g{\bm\beta}\bigr\rangle\\
  &=\log P({\bm\beta})+\bigl\langle E_J({\bm\beta}),{\bm\beta}\bigr\rangle\\
  &=S({\bm\beta})\,.\qedhere
 \end{align*}                     
\end{proof} 

\begin{rema}
The equality
 $$ E_J(\Ad_g\bm\beta)=\Ad^*_{g^{-1}}E_J({\bm\beta})+\theta(g)
 $$
states that the map $E_J:\Omega\to\Omega^*$ is equivariant with respect to the
adjoint action $\Phi$ of $G$ on $\mathfrak g$, restricted to the open subset 
$\Omega$ of $\mathfrak g$, and its 
affine action $a_\theta$ on ${\mathfrak g}^*$:
 $$a_\theta (g,\xi)=\Ad^*_{g^{-1}}\xi+\theta(g)\,,\quad g\in G\,,\quad \xi\in{\mathfrak g}^*\,,
 $$
restricted to the open subset $\Omega^*$ of ${\mathfrak g}^*$.
This result is not surprising, since it is well known (see, for example, \cite{Marle2018}) 
that the  moment map $J$ itself is equivariant with respect to the action $\Phi$ 
of $G$ on $M$ and its affine action $a_\theta$ on ${\mathfrak g}^*$: it states 
that the equivariance of $J$ implies the equivariance of its mean value. 
\end{rema} 

\begin{prop}\label{FormulesEJetTheta}
Let $\Theta=T_e\theta:{\mathfrak g}\to{\mathfrak g}^*$ be the $1$-cocycle of the 
Lie algebra ${\mathfrak g}$ associed to the symplectic $1$-cocycle $\theta$ of the 
Lie group $G$ (see, for example, \cite{Marle2018}). For any
${\bm\beta}\in\Omega$ and any $X\in{\mathfrak g}$, 
   \begin{align*}
    \bigl\langle E_J({\bm\beta}),[X,{\bm\beta}]\bigr\rangle&=\bigl\langle\Theta(X),{\bm\beta}\bigr\rangle\,,\\
    DE_J({\bm\beta})\bigl([X,{\bm\beta}]\bigr)&=-\ad^*_XE_J({\bm\beta}) + \Theta(X)\,.
   \end{align*}
\end{prop}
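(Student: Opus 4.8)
The plan is to obtain both identities by differentiating, along a one-parameter subgroup, the $G$-equivariance relations established in Proposition \ref{ActionAdjointeSurPetEJetS}. Fix ${\bm\beta}\in\Omega$ and $X\in{\mathfrak g}$, and consider the smooth curve $t\mapsto\Ad_{\exp(tX)}{\bm\beta}$. Since $\Omega$ is a union of adjoint orbits (Proposition \ref{ActionAdjointeSurPetEJetS}), this curve stays in $\Omega$, so $P$, $E_J$ and the right-hand sides of the equivariance formulas are smooth functions of $t$ near $0$ and differentiation at $t=0$ is legitimate. Throughout I would use that $\frac{\d}{\d t}\Ad_{\exp(tX)}{\bm\beta}\bigm|_{t=0}=[X,{\bm\beta}]$, that $\theta(e)=0$ with $T_e\theta=\Theta$, and that $\frac{\d}{\d t}\Ad^*_{\exp(-tX)}\bigm|_{t=0}=-\ad^*_X$ (with the sign convention of \ref{ActionAdjointeSurPetEJetS}, where $\ad^*_X$ is the transpose of $\ad_X=[X,\cdot\,]$).

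First I would prove the scalar identity. Taking logarithms in the first line of Proposition \ref{ActionAdjointeSurPetEJetS} gives $\log P(\Ad_g{\bm\beta})=-\langle\Ad^*_g\theta(g),{\bm\beta}\rangle+\log P({\bm\beta})$. Differentiating at $g=\exp(tX)$, $t=0$, the left-hand side yields $\langle D(\log P)({\bm\beta}),[X,{\bm\beta}]\rangle=-\langle E_J({\bm\beta}),[X,{\bm\beta}]\rangle$, using $D(\log P)=-E_J$ from \ref{ExpressionsFonctionsThermoGene}. On the right-hand side, the product rule applied to $\Ad^*_{\exp(tX)}\theta(\exp(tX))$ kills the term carrying $\frac{\d}{\d t}\Ad^*$ precisely because $\theta(e)=0$, leaving $-\langle\Theta(X),{\bm\beta}\rangle$. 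Equating the two sides gives $\langle E_J({\bm\beta}),[X,{\bm\beta}]\rangle=\langle\Theta(X),{\bm\beta}\rangle$, the first formula.

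Next I would differentiate the ${\mathfrak g}^*$-valued relation $E_J(\Ad_g{\bm\beta})=\Ad^*_{g^{-1}}E_J({\bm\beta})+\theta(g)$ at $g=\exp(tX)$, $t=0$. By the chain rule the left-hand side gives $DE_J({\bm\beta})([X,{\bm\beta}])$; on the right-hand side, $\frac{\d}{\d t}\Ad^*_{\exp(-tX)}E_J({\bm\beta})\bigm|_{t=0}=-\ad^*_XE_J({\bm\beta})$ and $\frac{\d}{\d t}\theta(\exp(tX))\bigm|_{t=0}=\Theta(X)$. This yields exactly $DE_J({\bm\beta})([X,{\bm\beta}])=-\ad^*_XE_J({\bm\beta})+\Theta(X)$, the second formula. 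As an internal consistency check, pairing the second formula with ${\bm\beta}$ recovers the first: the term $\langle DE_J({\bm\beta})([X,{\bm\beta}]),{\bm\beta}\rangle$ equals $\langle DS({\bm\beta}),[X,{\bm\beta}]\rangle$ by Proposition \ref{DifferentiellesFonctionsThermo}, which vanishes thanks to the entropy invariance $S(\Ad_g{\bm\beta})=S({\bm\beta})$, while $\langle\ad^*_XE_J({\bm\beta}),{\bm\beta}\rangle=\langle E_J({\bm\beta}),[X,{\bm\beta}]\rangle$.

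The computations are routine once differentiability along the orbit is justified; no genuine obstacle is expected, since both identities are merely the infinitesimal forms of equivariance relations already proven at the group level. The only points demanding care are the bookkeeping of signs and conventions in the derivatives of $\Ad$, $\Ad^*$ and $\ad^*$, and the observation that $\theta(e)=0$ is exactly what makes the product-rule term drop out in the first computation.
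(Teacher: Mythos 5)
Your proposal is correct and follows essentially the same route as the paper: the paper likewise obtains both formulas by setting $g=\exp(\tau X)$ in the equivariance relations of Proposition \ref{ActionAdjointeSurPetEJetS}, differentiating at $\tau=0$, and using $\theta(e)=0$, $T_e\theta=\Theta$ and $DP({\bm\beta})=-P({\bm\beta})E_J({\bm\beta})$. Differentiating $\log P$ instead of $P$ is an immaterial cosmetic difference, and your closing consistency check is a nice extra but not needed.
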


\begin{proof}
Let us set $g=\exp(\tau X)$ in the expression of $P(\Ad_g{\bm\beta})$ given in
proposition \ref{ActionAdjointeSurPetEJetS}, then take the derivative with respect to 
$\tau$ and set $\tau=0$. Using the well known equalities 
$\theta(e)=0$ and $T_e\theta=\Theta$, we obtain
 \begin{align*}
 DP({\bm\beta})\bigl([X,b]\bigr)
 &=\frac{\d}{\d\tau}
    \Bigl(\exp\bigl(-\bigl\langle \Ad^*_{\exp(\tau X)}\theta\bigl(\exp(\tau X)
     \bigr),{\bm\beta}\bigr\rangle\bigr) P({\bm\beta})\Bigr)\Bigm|_{\tau=0}\\
 &=-\bigl\langle\Theta(X),{\bm\beta}\bigr\rangle P({\bm\beta})\,,
 \end{align*}
which proves the first assertion, since $DP({\bm\beta})=-P({\bm\beta})E_J({\bm\beta})$.
\par\smallskip

Similarly, let us set $g=\exp(\tau X)$ in the expression of
$E_J(\Ad_g{\bm\beta})$ given in proposition \ref{ActionAdjointeSurPetEJetS}, 
then take the derivative with respect to $\tau$ and set $\tau=0$. We obtain
 \begin{align*}
  DE_J({\bm\beta})\bigl([X,{\bm\beta}]\bigr)
  &=\frac{\d}{\d\tau}\Bigl(\Ad^*_{\exp(-\tau X)}E_J({\bm\beta})
    +\theta\bigl(\exp(\tau X)\bigr)\Bigr)\Bigm|_{\tau=0}\\
  &=-\ad^*_XE_J({\bm\beta})+\Theta(X)\,.\qedhere
 \end{align*}
\end{proof}

\begin{theo}\label{ExpressionGammaAdjointe} Let us set,
for each generalized temperature ${\bm\beta}\in\Omega$, 
 $$J_{\bm\beta}=J-E_J({\bm\beta})\,,$$
and, for each $g\in G$,
 $$\theta_{\bm\beta}(g)=\theta(g)-E_J({\bm\beta})+\Ad^*_{g^{-1}}E_J({\bm\beta})\,.
 $$
The map $J_{\bm\beta}$ is the unique moment map of the Hamiltonian action $\Phi$ 
whose mean value, for the generalized temperature $\bm\beta$, is equal to $0$. 
The map $\theta_{\bm\beta}$ is the symplectic cocycle of the Lie group $G$, 
cohomologous to $\theta$, associated to the moment map $J_{\bm\beta}$. It depends 
on ${\bm\beta}$ but not on the choice of  $J$.
\par\smallskip

Let $\Theta_{\bm\beta}:{\mathfrak g}\to{\mathfrak g}^*$ be the symplectic cocycle
of the Lie algebra ${\mathfrak g}$ associated to the symplectic $1$-cocycle 
$\theta_{\bm\beta}$ of the Lie group $G$ (see, for example, \cite{Marle2018}). 
Its expression is
 $$\Theta_{\bm\beta}(X)=T_e\theta_{\bm\beta}(X)=\Theta(X)-\ad^*_XE_J({\bm\beta})\,.
 $$
The map $\Theta_{\bm\beta}$ is the unique symplectic $1$-cocycle of the Lie algebra
$\mathfrak g$ which is cohomologous to $\Theta$ and satisfies the equality
 $$\Theta_{\bm\beta}({\bm\beta})=0\,.
 $$
\par\smallskip
Let $X$ and $Y$ be two elements in ${\mathfrak g}$, considered as two elements of
$T_{\bm\beta}\Omega$, in other words as two vectors tangent 
to $\Omega$ at its point $\bm\beta$. Let us moreover assume that $X$ is tangent 
to the adjoint orbit of $\bm\beta$ at its point $\bm\beta$. There exists 
$X_1\in{\mathfrak g}$ such that $X=[{\bm\beta},X_1]$. When evaluated on the pair
of tangent vectors $(X,Y)$, the Riemannian metric $\Gamma$ can be expressed as
 $$\Gamma({\bm\beta})(X,Y)=\bigl\langle\Theta_{\bm\beta}(X_1),Y\bigr\rangle\,.                  
 $$
If $Y$ too is tangent to the adjoint orbit of $\bm\beta$ at its point $\bm\beta$,
there exists $Y_1\in{\mathfrak g}$ such that $Y=[{\bm\beta},Y_1]$, and we have the two
equalities, which express the Riemannian metric induced by $\Gamma$ on the adjoint orbit of
$\bm\beta$,
 $$\Gamma({\bm\beta})(X,Y)=\bigl\langle\Theta_{\bm\beta}(X_1),Y\bigr\rangle
                      =\bigl\langle\Theta_{\bm\beta}(Y_1),X\bigr\rangle\,.                  
 $$                      
\end{theo}

\begin{proof}
Since $\Theta$, being a symplectic cocycle, is skew-symmetric, we have, for each 
$X\in{\mathfrak g}$, 
$\bigl\langle \Theta({\bm\beta}),X\bigr\rangle=-\bigl\langle \Theta(X),{\bm\beta}\bigr\rangle$.
Using the equalities proven in \ref{FormulesEJetTheta}, we obtain
 \begin{align*}
   \bigl\langle \Theta_{\bm\beta}({\bm\beta}),X\bigr\rangle
   &=\bigl\langle \Theta({\bm\beta}),X\bigr\rangle -\bigl\langle\ad^*_{\bm\beta} E_J({\bm\beta}),X\bigr\rangle
    =-\bigl\langle\Theta(X),{\bm\beta}\bigr\rangle -\bigl\langle E_J({\bm\beta}), [{\bm\beta},X]\bigr\rangle\\
   &=-\bigl\langle E_J({\bm\beta}),[X,{\bm\beta}]\bigr\rangle-\bigl\langle E_J({\bm\beta}),[{\bm\beta},X]\bigr\rangle
    =0\,.
 \end{align*}
Other statements about $J_{\bm\beta}$, $\theta_{\bm\beta}$ and $\Theta_{\bm\beta}$
easily follow from well known properties of moment maps of Hamiltonian actions 
(see for example \cite{Marle2018}). 
\par\smallskip

Using theorem \ref{EJDiffeo} and proposition \ref{FormulesEJetTheta}, we obtain, 
for all ${\bm\beta}\in\Omega$, $X_1$ and $Y\in{\mathfrak g}$, with
$X=[X_1,{\bm\beta}]$,
 \begin{align*}
  \Gamma({\bm\beta})\bigl([X_1,{\bm\beta}],Y\bigr)
   =-\bigl\langle DE_J({\bm\beta})\bigl([X_1,{\bm\beta}]\bigr),Y\bigr\rangle
   =\bigl\langle\ad^*_{[X_1,{\bm\beta}]}E_J({\bm\beta})+\Theta(X_1),Y\bigr\rangle\,.
 \end{align*}
According to proposition \ref{EffetChangementJ}, the bilinear form $\Gamma$
does not depend on the choice of the moment map $J$, so we can replace $J$ by 
$J_{\bm\beta}$ in the right hand side of the above equality. Of course we have to replace
too $E_J$ by $E_{J_{\bm\beta}}$ and $\Theta$ by $\Theta_{\bm\beta}$. The map
$J_{\bm\beta}$ was chosen so that$ E_{J_{\bm\beta}}({\bm\beta})=0$, so we obtain
 $$\Gamma({\bm\beta})\bigl([X_1,{\bm\beta}],Y\bigr)
   =\bigl\langle\Theta_{\bm\beta}(X_1),Y\bigr\rangle\,.
 $$
When we both have $X=[X_1,{\bm\beta}]$ and $Y=[Y_1,{\bm\beta}]$, with $X_1$ and 
$Y_1\in{\mathfrak g}$, we can exchange the parts played by 
$X$ and $Y$ and write
 \begin{align*}
  \Gamma({\bm\beta})(X,Y)
   &=\Gamma({\bm\beta})\bigl([X_1,{\bm\beta}],[Y_1,{\bm\beta}]\bigr)
    =\Gamma({\bm\beta})\bigl([Y_1,{\bm\beta}],[X_1,{\bm\beta}]\bigr)\\
   &=\Gamma({\bm\beta})\bigl([Y_1,{\bm\beta}],X\bigr) 
    =\bigl\langle\Theta_{\bm\beta}(Y_1),X\bigr\rangle\,.\qedhere
 \end{align*}
\end{proof}

\section{Examples of    Gibbs states}
\label{sec:4}

This section describes several examples of    Gibbs states and the 
associated    thermodynamic functions. It begins with a subsection (\ref{subsec:4.1}) in which 
some properties of oriented three-dimensional Euclidean or pseudo-Euclidean vector spaces are recalled. 
A remarkable isomorphism of such a vector space onto the Lie algebra
of its group of symmetries is defined (\ref{subsub:RemarkableIsomorphism}). This isomorphism, which can be expressed
in terms of the Hodge star operator (\ref{subsub:ExpressionHodge}), is well known 
and often used in mechanics when the considered vector space is properly Euclidean, 
maybe a little less well known when it is pseudo-Euclidean. With its use, the considered vector space
can be endowed both with a Lie algebra structure and a Lie-Poisson structure (\ref{subsub:LieAlgebraAndPoissonOnF}).
The coadjoint orbits of its group of symmetries can be considered as submanifolds of this vector space
(\ref{subsub:CoadjointOrbitsInF}), a property used 
in the following subsection (\ref{subsec:4.2}) for the determination of    Gibbs states 
on several two-dimensional symplectic manifolds: the two-dimensional sphere (\ref{subsub:4.2.1}), 
the two-dimensional pseudo-sphere (\ref{subsub:4.2.2}),
the Poincaré disk (\ref{subsub:4.2.3}) and the Poincaré half-plane (\ref{subsub:4.2.4}). In  
\ref{subsub:4.2.5}, it is proven that on a two-dimensional symplectic vector space, 
there is no    Gibbs state for the action of the linear symplectic group. 
Finally, the    Gibbs states and the associated thermodynamic functions 
for the action, on an Euclidean affine space, of the group of its displacements, 
is determined in \ref{subsub:4.2.6}.

\subsection{Three-dimensional oriented vector spaces}
\label{subsec:4.1}
In what follows, $\zeta$ is a real integer whose value is either $+1$ or $-1$, and
${\F}$ is a three-dimensional real vector space endowed with a scalar product 
$\F\times\F\to\RR$, denoted by $({\bf v},{\bf w})\mapsto {\bf v}\cdot {\bf w}$, 
with ${\bf v}$ and ${\bf w}\in\F$, whose signature is $(+,+,+)$ when $\zeta=1$ 
and $(+,+,-)$ when $\zeta=-1$. This scalar product is Euclidean when $\zeta=1$ 
and pseudo-Euclidean when $\zeta=-1$. A  basis $({\bf e}_x,{\bf e}_y,{\bf e}_z)$ 
of $\F$ is said to be \emph{orthonormal} when 
 $${\bf e}_x\cdot{\bf e}_x={\bf e}_y\cdot{\bf e}_y=1\,,\
 {\bf e}_z\cdot{\bf e}_z=\zeta\,,\ 
 {\bf e}_x\cdot{\bf e}_y={\bf e}_y\cdot{\bf e}_z={\bf e}_z\cdot{\bf e}_x=0\,.
 $$
When $\zeta=-1$, the vector space $\F$ is called a \emph{three-dimensional Minkowski
vector space}. A non-zero element ${\bf v}\in\F$ is said to be \emph{space-like}
when ${\bf v}\cdot{\bf v}>0$, \emph{time-like} when ${\bf v}\cdot{\bf v}<0$ and
\emph{light-like} when ${\bf v}\cdot{\bf v}=0$. The subset of $\F$ made of non-zero
time-like or light-like elements has two connected components. A \emph{temporal orientation}
of $\F$ is the choice of one of these two connected components, whose elements are said
to be \emph{directed towards the future}. 
\par\smallskip

Both when $\zeta=1$ and when $\zeta=-1$, we will assume in what follows that an 
orientation of $\F$ in the usual sense is chosen, and when $\zeta=-1$, we will assume
that a temporal orientation of $\F$ is chosen too. The orthonormal bases of $\F$ 
used will always be chosen positively oriented and, when $\zeta=-1$, their third 
element ${\bf e}_z$ will be chosen time-like and directed towards the future. Such bases 
of $\F$ will be called \emph{admissible bases}.

We denote by $G$ be the subset of $\GL(\F)$ made of linear automorphisms $g$ of $\F$ which transform
any admissible basis $({\bf e}_x,{\bf e}_y,{\bf e}_z)$ of $\F$ 
into an admissible basis $\bigl(g({\bf e}_x),g({\bf e}_y),g({\bf e}_z)\bigr)$. Elements $g$ of $G$ 
preserve the scalar product in $\F$, \emph{i.e.}, 
they are such that, for any pair $({\bf v},{\bf w})\in\F\times\F$,
 $$g({\bf v})\cdot g({\bf w})={\bf v}\cdot{\bf w}\,.
 $$  
Moreover, they preserve the orientation of $\F$ and, when $\zeta=-1$, its temporal
orientation. The subset $G$ of $\GL(\F)$ is the group of symmetries of $\F$, 
endowed with its scalar product, its orientation and, when $\zeta=-1$, its temporal
orientation. It is a connected Lie group isomorphic to the rotation
group $\SO(3)$ when $\zeta=1$, and to the restricted three-dimensional Lorentz group $\SO(2,1)$ 
when $\zeta=-1$. Its Lie algebra, which will be denoted by $\mathfrak g$, is 
therefore isomorphic to $\so(3)$ when $\zeta=1$, and to $\so(2,1)$ when $\zeta=-1$.
\par\smallskip

Some useful properties of the vector space $\F$, of its symmetry group $G$ and of the
Lie algebra $\mathfrak g$ are recalled below. The interested reader will 
find their detailed proofs in \cite{Marle2019} or, for most of them, 
in the very nice book \cite{NovikovTaimanov}.  

\begin{enonce}[definition]{A remarkable Lie algebras isomorphism}
\label{subsub:RemarkableIsomorphism}   
Let $({\bf e}_x,{\bf e}_y,{\bf e}_z)$ be an admissible basis of $\F$, in the sense indicated
in \ref{subsec:4.1}. 
For any triple $(a,b,c)\in\RR^3$, let $j(a{\bf e}_x+b{\bf e_y}+c{\bf e}_z)$ be
the linear endomorphism of $\F$ whose matrix, in the basis $({\bf e}_x,{\bf e}_y,{\bf e}_z)$, is
 $$\hbox{matrix of\ }j(a{\bf e}_x+b{\bf e}_y+c{\bf e}_z)=
    \begin{pmatrix}0&-c&b\cr
                   c&0&-a\cr
                   -\zeta b&\zeta a&0\cr
    \end{pmatrix}\,.
 $$
The map $j$ does not depend on the admissible basis of $\F$ used for its definition. 
This property follows from the fact that $j$ can be expressed in terms of the Hodge star
operator, as explained below in \ref{subsub:ExpressionHodge}. It is linear and injective, 
and its image is the Lie algebra $\mathfrak g$, considered
as a vector subspace of the vector space ${\mathcal L}(\F,\F)$ of linear endomorphisms of $\F$.
There exists a unique  bilinear and skew-symmetric map, defined on $\F\times\F$ and with values in $\F$,
denoted by $({\bf v},{\bf w})\mapsto {\bf v}\dot\times{\bf w}$, such that, for all ${\bf v}$ and
${\bf w}\in\F$,
 $$j({\bf v}\dot\times{\bf w})=\bigl[j({\bf v}),j({\bf w})\bigr]
                              =j({\bf v})\circ j({\bf w})-j({\bf w)}\circ j({\bf v})\,. 
 $$
The bilinear map $({\bf v},{\bf w})\mapsto {\bf v}\dot\times{\bf w}$ will be called
the \emph{   cross product} on $\F$.
The map $j:\F\to{\mathfrak g}$ is a Lie algebras isomorphism of $\F$ (endowed with 
the    cross product as composition law) onto the Lie algebra $\mathfrak g$, whose
composition law is the commutator of endomorphisms. Its transpose $j^T:{\mathfrak g}^*\to \F^*$,
defined by the equality
 $$\langle j^T(\xi), {\bf v}\rangle=\langle\xi,j({\bf v})\rangle\,,\quad
    \xi\in {\mathfrak g}^*\,,\ {\bf v}\in \F\,,
 $$
is therefore an isomorphism of the dual vector space ${\mathfrak g}^*$ of 
the Lie algebra $\mathfrak g$ onto the dual vector space 
$\F^*$ of $\F$.  
\par\smallskip

When $\zeta=1$, the    cross product is the well known \emph{cross product}
$({\bf v},{\bf w})\mapsto {\bf v}\times{\bf w}$ on the Euclidean oriented three-dimensional
vector space $\F$, and the map $j:\F\to{\mathfrak g}\equiv\so(3)$ is the isomorphism
of $\F$ onto the Lie algebra ${\mathfrak g}\equiv\so(3)$ of its Lie group 
of symmetries $G\equiv\SO(3)$, very often used in mechanics (see for example \cite{Souriau1969}). 
These remarkable properties of oriented Euclidean three-dimensional vector 
spaces therefore still hold for oriented pseudo-Euclidean three-dimensional vector spaces, 
the usual cross product being replaced with the    cross product defined above. 
\par\smallskip

Both when $\zeta=1$ and when $\zeta=-1$, for all $g\in G$, ${\bf v}$ and ${\bf w}\in\F$,
 $$j\bigl(g({\bf v})\bigr)=\Ad_g\bigl(j({\bf v})\bigr)\,,\quad
   g({\bf v})\dot\times g({\bf w})=g({\bf v}\dot\times{\bf w})\,.
 $$
The first above equality expresses the fact that the map $j$ is equivariant with 
respect to the natural action of $G$ on $\F$ and its adjoint action on its 
Lie algebra $\mathfrak g$. The second expresses the fact that the action of the group of symmetries 
$G$ preserves the    cross product.
\par\smallskip

We denote by $\scal$ the linear map defined by the equality
 $$\bigl\langle\scal({\bf u}),{\bf v}\bigr\rangle={\bf u}\cdot{\bf v}\quad\hbox{for all}\ 
    {\bf u}\ \hbox{and}\ {\bf v}\in\F\,,
 $$
where, in the left hand side, $\bigl\langle\scal({\bf u}),{\bf v}\bigr\rangle$
denotes the pairing by duality of $\scal({\bf u})\in\F^*$ with ${\bf v}\in\F$.
The map $\scal$ is an isomorphism of $\F$ onto its dual vector space $\F^*$, 
which satisfies, for any admissible basis $({\bf e}_x,{\bf e}_y,{\bf e}_z)$ of $\F$,
 $$\scal({\bf e}_x)= {\bm\varepsilon}_x\,,\quad
    \scal({\bf e}_y)= {\bm\varepsilon}_y\,,\quad
     \scal({\bf e}_z)= \zeta{\bm\varepsilon}_z\,,
 $$
where $({\bm\varepsilon}_x,{\bm\varepsilon}_y,{\bm\varepsilon}_z)$  is the basis
of $\F^*$ dual of the basis $({\bf e}_x,{\bf e}_y,{\bf e}_z)$ of $\F$.
\par\smallskip

The isomorphism $\scal:\F\to\F^*$ satisfies, for all $g\in G$ and ${\bf v}\in\F$,
 $$\scal(g({\bf v}))=(g^{-1})^T(\scal{\bf v})\,,
 $$
where $(g^{-1})^T:\F^*\to\F^*$ is the linear automorphism of $\F^*$ transpose of the
linear automorphism $g^{-1}$ of $\F$. This equality expresses the fact that
$\scal$ is equivariant with respect to the natural action of $G$ on $\F$ and 
its contragredient action on the left on $\F^*$,
$(g,\eta)\mapsto (g^{-1})^T(\eta)$, with $g\in G$, $\eta\in\F^*$.
\par\smallskip

Therefore the map $(j^{-1})^T\circ \scal:\F\to{\mathfrak g}^*$ is a linear isomorphism
which satisfies, for all $g\in G$ and ${\bf v}\in\F$,
 $$(j^{-1})^T\circ\scal\bigl(g({\bf v})\bigr)=\Ad^*_{g^{-1}}\bigl((j^{-1})^T\circ\scal({\bf v})\bigr)\,,
 $$
which expresses the fact that the isomorphism $(j^{-1})^T\circ\scal$ is equivariant 
with respect to the natural action of $G$ on $\F$ and its coadjoint action on the left
on ${\mathfrak g}^*$, $(g,\xi)\mapsto\Ad^*_{g^{-1}}\xi$,
with $g\in G$, $\xi\in{\mathfrak g}$.
\par\smallskip

In what follows the vector space  $\F$ will be identified either with the Lie algebra
$\mathfrak g$ by means of the isomorphism $j$, or with the dual vector space
${\mathfrak g}^*$ by means of the isomorphism $(j^{-1})^T\circ\scal$. We will write simply
$\F\equiv{\mathfrak g}$ when $\F$ is identified with ${\mathfrak g}$ and 
$\F\equiv{\mathfrak g}^*$ when it is identified with ${\mathfrak g}^*$, without writing explicitly
the isomorphism used for this identification. The natural action of $G$ on $F$ will therefore be
identified with its adjoint action on ${\mathfrak g}$ when $\F\equiv{\mathfrak g}$
and with its coadjoint action on the left 
on ${\mathfrak g}^*$ when $\F\equiv{\mathfrak g}^*$.  
\end{enonce}

\begin{enonce}[definition]{Expression of the map $j$ in terms of the Hodge star operator}
\label{subsub:ExpressionHodge}
\noindent
For any oriented $n$-dimensional real vector space endowed with a nondegenerate scalar product
with any signature, the \emph{Hodge star operator}, introduced by the British mathematician
W.~V.~D.~Hodge (1903--1975) is a linear automorphism of the vector space
$\bigwedge V=\oplus_{k=0}^n\bigwedge^k V$ wich, for each integer $k$ satisfying $0\leq k\leq n$, 
maps $\bigwedge^k V$ onto $\bigwedge^{n-k}V$, with, by convention, $\bigwedge^0V=\RR$
(see for example \cite{HodgeWikipedia} or \cite{ChoquetBruhat}, page 281). For the three-dimensional
vector space $\F$ considered here, the Hodge star operator satisfies, for
any admissible basis $({\bf e}_x,{\bf e}_y,{\bf e}_z)$ of $\F$, the following equalities:
 \begin{gather*}
 *(1)={\bf e}_x\wedge{\bf e}_y\wedge{\bf e}_z\,,\quad\hbox{and conversely}\
 *({\bf e}_x\wedge{\bf e}_y\wedge{\bf e}_z)=\zeta\,,\\
 *({\bf e}_x)={\bf e}_y\wedge{\bf e}_z\,,\quad\hbox{and conversely}\ 
 *({\bf e}_y\wedge{\bf e}_z)=\zeta{\bf e}_x\,,\\
 *({\bf e}_y)={\bf e}_z\wedge{\bf e}_x\,,\quad\hbox{and conversely}\ 
 *({\bf e}_z\wedge{\bf e}_x)=\zeta{\bf e}_y\,,\\
 *({\bf e}_z)=\zeta{\bf e}_x\wedge{\bf e}_y\,,\quad\hbox{and conversely}\ 
 *({\bf e}_x\wedge{\bf e}_y)={\bf e}_z\,.
 \end{gather*}
By using these formulae, one easily can check that the isomorphism $j:\F\to{\mathfrak g}$
is expressed in terms of the Hodge star operator as follows. For any triple
$(a,b,c)\in \RR^3$,
 $$j(a{\bf e}_x+b{\bf e}_y+c{\bf e}_z)=*(\zeta a{\bf e}_x+\zeta b{\bf e}_y+c{\bf e}_z)\,.
 $$
This result immediatly implies that the isomorphism $j$ does not depend on the choice
of the admissible basis used for its definition. When I first introduced $j$ 
when $\zeta=-1$ in \cite{Marle2019}, I was not aware of its expression in terms 
of the Hodge star operator. With a better choice of conventions for the definition 
of $j$, its expression in terms of the Hodge star operator could be made more natural.  
\end{enonce}

\begin{enonce}[definition]{The pseudo-Riemannian or Riemannian metric, 
the Lie algebra and the Lie-Poisson structures of $\bf F$}
\label{subsub:LieAlgebraAndPoissonOnF}
\noindent
Since, as explained at the end of  \ref{subsub:RemarkableIsomorphism},we have both
$\F\equiv{\mathfrak g}$ and $\F\equiv{\mathfrak g}^*$, the vector space $\F$
is endowed with a Lie algebra structure for which its identification with ${\mathfrak g}$
is a Lie algebras isomorphism, and with a Lie-Poisson structure for which its identification
with ${\mathfrak g}^*$ is a Poisson diffeomorphism. 
Let  $({\bf e}_x,{\bf e}_y,{\bf e}_z)$ be an admisssible basis of $\F$, and let
$x$, $y$ and $z$ be the coordinate functions on $\F$ in this admissible basis.
\par\smallskip

As seen in \ref{subsub:RemarkableIsomorphism}, the composition law of the Lie algebra 
structure of $\F$ is the    cross product 
$({\bf v},{\bf w})\mapsto{\bf v}\dot\times{\bf w}$. The non-zero brackets of ordered
pairs of elements of the considered admissible basis are
 $${\bf e}_x\dot\times{\bf e}_y=-{\bf e}_y\dot\times{\bf e}_x=\zeta{\bf e}_z\,,\ 
   {\bf e}_y\dot\times{\bf e}_z=-{\bf e}_z\dot\times{\bf e}_y={\bf e}_x\,,\ 
   {\bf e}_z\dot\times{\bf e}_x=-{\bf e}_x\dot\times{\bf e}_z={\bf e}_y\,.
 $$ 
For the Lie-Poisson structure of $\F$, the non-zero brackets of ordered pairs 
of coordinate functions are
 $$\{x,y\}=-\{y,x\}=z\,,\ \{y,z\}=-\{z,y\}=\zeta x\,,\ 
    \{z,x\}=-\{x,z\}=\zeta y\,,
 $$
and the expression of the Poisson bivector $\Lambda_{\F}$, in these coordinates, is
 $$\Lambda_{\F}(x,y,z)=z\frac{\partial}{\partial x}\wedge\frac{\partial}{\partial y}
                 +\zeta x\frac{\partial}{\partial y}\wedge\frac{\partial}{\partial z}
                 +\zeta y\frac{\partial}{\partial z}\wedge\frac{\partial}{\partial x}\,.  
 $$
Still in the coordinates functions considered, the expression of the pseudo-Riemannian 
or Riemannian metric on $\F$ determined by its scalar product is
 $$\d {s_{\F}}^2(x,y,z)=\d x^2+\d y^2+\zeta\d z^2\,.
 $$
\end{enonce}

\begin{enonce}[definition]{The coadjoint orbits of $G$ as submanifolds of $\F$}
\label{subsub:CoadjointOrbitsInF}
\noindent
Since $\F\equiv{\mathfrak g}^*$, the coadjoint orbits of $G$ can be considered as
submanifolds of $\F$. So considered they are the connected 
submanifolds of $\F$ defined as $\{{\bf v}\in \F| {\bf v}\cdot{\bf v}=\hbox{Constant}\}$,
with any possible $\hbox{Constant}\in\RR$. In other words, with the coordinate 
functions $x$, $y$ and $z$ in an admissible basis $({\bf e}_x,{\bf e}_y,{\bf e}_z)$ 
of $\F$, coadjoint orbits are connected submanifolds of $\F$ determined 
by an equation $x^2+y^2+\zeta z^2=\hbox{Constant}$,
for some $\hbox{Constant}\in\RR$. The singleton $\{0\}$, whose unique element is 
the origin of $\F$, is a zero-dimensional coadjoint orbit. All other coadjoint 
orbits are two-dimensional.
\par\smallskip 

Let $\mathcal O$ be any two-dimensional coadjoint orbit. On suitably chosen open subsets
of $\mathcal O$, one can use as coordinates two of the three coordinate functions
$x$, $y$ and $z$ associated with an admissible basis $({\bf e}_x,{\bf e}_y,{\bf e}_z)$ 
of $\F$, the third coordinate being on the chosen subset a smooth function of the
other two coordinates. Another possible choice of coordinates, which seems the most convenient,
is made of the third coordinate $z$ and the angular coordinate $\varphi$, defined by the equalities
 $$x=\sqrt{x^2+y^2}\cos\varphi\,,\quad y=\sqrt{x^2+y^2}\sin\varphi\,.
 $$
The symplectic form $\omega_{\mathcal O}$ of 
the coadjoint orbit $\mathcal O$ admits the four equivalent expressions in terms of these coordinates:
 $$\omega_{\mathcal O}=\frac{1}{z(x,y)}\d x\wedge\d y=\frac{\zeta}{x(y,z)}\d y\wedge\d z
           =\frac{\zeta}{y(z,x)}\d z\wedge\d x=\zeta\d\varphi\wedge\d z\,.
 $$
Each of the first three expressions of $\omega_{\mathcal O}$ is valid on open subsets of $\mathcal O$ on which
the coordinate considered as  a smooth function of the other two coordinates is non-zero:
$z(x,y)$ for the first, $x(y,z)$ for the second and  $y(z,x)$ for the third expression.
The fourth expression is valid on the dense open subset of $\mathcal O$ on which the angular coordinate
$\varphi$ can be locally defined, \emph{i.e.}, on the complementary subset of the set of points in $\mathcal O$ where both 
$x=0$ and $y=0$. When $\zeta=1$ this occurs only at two points of each two-dimensional coadjoint
orbit. When $\zeta =-1$, it occurs nowhere on some two-dimensional coadjoint orbits (the one sheeted hyperboloids 
denoted below by $H_R$ and the light cones with their apex removed denoted below by $C^+$ and $C^-$), 
and at a single point for other coadjoint orbits (the pseudo-spheres denoted below by $P_R^+$ and $P_R^-$).  
For this reason the coordinate system made of $z$ and $\varphi$
is the most convenient for the determination of    Gibbs states. With these coordinates, 
the expression of the Liouville measure $\lambda_{\omega_{\mathcal O}}$ is
 $$\lambda_{\omega_{\mathcal O}}(\d {\bf v})=\d z\,\d\varphi\,,\quad {\bf v}\in{\mathcal O}\ \hbox{with coordinates}\ (z,\varphi)\,.
 $$

When $\zeta=1$, all two-dimensional coadjoint orbits are spheres centered on the origin $0$ of $\F$.
Their radius can be any real $R>0$. We will denote by $S_R$ the sphere of radius $R$ centered on $0$.
It should be observed that the symplectic form on the coadjoint orbit 
$\omega_{S_R}$ \emph{is not} the area form on this sphere, 
since it is proportional to $R$, not to $R^2$. The area form of $S_R$ is $R\omega_{S_R}$.  
\par\smallskip

When $\zeta=-1$, there are three kinds of two-dimensional coadjoint orbits, described below.

\begin{itemize}

\item{} The orbits, denoted by $P_R^+$ and $P_R^-$,  whose respective equation is
 $$z=\sqrt{R^2+x^2 +y^2}\ \hbox{for}\ P_R^+\ \hbox{and}\  
   z=-\sqrt{R^2+x^2 +y^2}\ \hbox{for}\ P_R^-\,,\quad\hbox{with}\ R>0\,.
 $$
They are called \emph{pseudo-spheres} of radius $R$. Each one is a sheet of a two-sheeted two-dimensional
hyperboloid with the $z$ axis as revolution axis. They are said to be \emph{space-like} submanifolds of $\F$,
since all their tangent vectors are space-like vectors. 

\item{} The orbits, denoted by $H_R$, defined by the equation
 $$x^2+y^2=z^2+R^2\,,\quad\hbox{wit}\ R>0\,.
 $$
Each of these orbits is a single-shetted hyperboloid with the $z$ axis as revolution axis.
The tangent space at any point to such an orbit is a two-dimensional Minkowski vector space.

\item{} The two orbits, denoted by $C^+$ and $C^-$, defined respectively by
 $$z^2=x^2+y^2\ \hbox{and}\ z>0\,,\qquad z^2=x^2+y^2\ \hbox{and}\ z<0\,.
 $$
They are the cones in $\F$ (without their apex, the origin $0$ of $\F$), made of light-like vectors
directed, respectively, towards the future and towards the past.
\end{itemize}
\end{enonce}

\subsection{Gibbs states on some symplectic manifolds}
\label{subsec:4.2} 
In this subsection assumptions and notations are those of \ref{subsec:4.1}.

\begin{enonce}[definition]{   Gibbs states on two-dimensional spheres}
\label{subsub:4.2.1}
We assume here that $\zeta =1$. The Lie group $G$ is therefore isomorphic to 
$\SO(3)$ and its Lie algebra
$\mathfrak g$ is isomorphic to $\so(3)$. Let us consider the sphere $S_R$ of radius
$R>0$ centered on the origin $0$ of the vector space $\F\equiv{\mathfrak g}^*$ (identified 
with the dual ${\mathfrak g}^*$ of the Lie algebra $\mathfrak g$, as explained at the end of
\ref{subsub:RemarkableIsomorphism}). This sphere is a 
coadjoint orbit, and the moment map of the Hamiltonian action of $G$ on it
is its canonical injection into $\F\equiv{\mathfrak g}^*$. Let ${\bm \beta}\in{\mathfrak g}\equiv\F$.
Let us choose an admissible basis $({\bf e}_x,{\bf e}_y,{\bf e}_z)$ of $\F$ such that
${\bf e}_z$ and $\bm\beta$ are parallel and directed in the same direction. 
Therefore we have ${\bm\beta}=\beta{\bf e}_z$, with $\beta\geq 0$.
\par\smallskip

For each ${\bf r}=x{\bf e}_x+y{\bf e}_y+z{\bf e}_z\in S_R$, we have
  $$\bigl\langle J({\bf r}),{\bm\beta}\bigr\rangle={\bf r}\cdot{\bm\beta}=\beta z\,.
  $$
As explained in \ref{subsub:CoadjointOrbitsInF}, on the dense open subset of $S_R$ complementary to the union of
the two poles $\{-R{\bf e}_z,R{\bf e}_z\}$, we can use the coordinate system $(z,\varphi)$ and write
 \begin{align*}\int_{S_R}\exp\bigl(-\langle J({\bf r}),{\bm\beta}\rangle\bigr)\lambda_{\omega_{S_R}}(\d{\bf r})
   &=\int_0^{2\pi}\left(\int_{-R}^R\exp(-\beta z)\d z\right)\d\varphi\\
   &=\begin{cases}
     4\pi R&\text{if $\beta=0$},\\
     {\displaystyle\frac{4\pi\sh(R\beta)}{\beta}}&\text{if $\beta>0$.}
      \end{cases}
 \end{align*}
Since $S_R$ is compact, the above integral is always normally convergent. The open subset
$\Omega$ of generalized temperatures is the whole Lie algebra $\mathfrak g$. The
partition function $P$ and the probability density  $\rho_{\bm\beta}$ of the
   Gibbs state indexed by $\bm\beta$ are expressed as
 \begin{align*}
  P({\bm\beta})
   &=\begin{cases}     
     {\displaystyle\frac{4\pi\sh(R\beta)}{\beta}}&\text{if $\beta>0$,}\\
     4\pi R&\text{if $\beta=0$},
      \end{cases}\\
   \rho_{\bm\beta}({\bf r})
    &=\begin{cases}
        {\displaystyle\frac{\beta\exp(-\beta z)}{4\pi\sh(R\beta)}}&\text{if $\beta>0$,}\\
        {\displaystyle\frac{1}{4\pi R}}&\text{if $\beta=0$,}
      \end{cases}\quad\hbox{with}\ {\bf r}=x{\bf e}_x+y{\bf e}_y+z{\bf e}_z\in S_R\,.
  \end{align*}  
When $\beta>0$, the thermodynamic functions mean value of $J$ and entropy are expressed as
 \begin{align*}
 E_J({\bm\beta})
 &=\frac{1-R\beta\coth(R\beta)}{\beta^2}\,{\bm\beta}\,,\\
 S({\bm\beta})
 &=1+\log\left(\frac{4\pi\sh(R\beta)}{\beta}\right)
    -R\beta\coth(R\beta)\,.
 \end{align*}
\end{enonce}

\begin{enonce}[definition]{   Gibbs states on 
two-dimensional pseudo-spheres and other $\SO(2,1)$ coadjoint orbits}
\label{subsub:4.2.2}
We assume here that $\zeta =-1$. The Lie group $G$ is therefore isomorphic to 
$\SO(2,1)$ and its Lie algebra $\mathfrak g$ is isomorphic to $\so(2,1)$.
For each  coadjoint orbit $\mathcal O$ of $G$, we must determine whether the
integral, which defines a function of the variable ${\bm\beta}\in{\mathfrak g}$, 
 $$\int_{\mathcal O}\exp\bigl(-\bigl\langle J({\bf r}),{\bm\beta}\bigr\rangle\bigr)
    \lambda_{\omega_{\mathcal O}}(\d{\bf r})\eqno(*)
 $$
is normally convergent.
\par\smallskip

Let us first assume that $\mathcal O$ is the pseudo-sphere $P_R^+$ defined in
\ref{subsub:CoadjointOrbitsInF}, for some real number $R>0$. When the vector
${\bm\beta}\in\F\equiv{\mathfrak g}$ is time-like, we choose an admissible basis
$({\bf e}_x,{\bf e}_y,{\bf e}_z)$ of $\F$ such that 
${\bf e}_z$ and ${\bm\beta}$ are parallel. We can therefore write ${\bm\beta}=\beta{\bf e}_z$,
with $\beta\in\RR$. We have now, for each ${\bf r}=x{\bf e}_x+y{\bf e}_y+z{\bf e}_z\in{\mathcal O}\subset{\F}\equiv\F^*$,
 $$\bigl\langle J({\bf r}),{\bm\beta}\bigr\rangle={\bf r}\cdot{\bm\beta}=\zeta z\beta=-z\beta\,,
 $$
since $\zeta=-1$. We can choose $(z,\varphi)$ as coordinates on the dense open 
subset of $\mathcal O$ complementary to the singleton $\{R{\bf e}_z\}$, so the above integral
$(*)$ becomes                    
 $$\int_0^{2\pi}\left(\int_R^{+\infty}\exp(\beta z)\d z\right)\d\varphi\,.
 $$
This integral is normally convergent if and only if $\beta<0$, in other words if and
only if the time-like vector $\bm\beta$ is directed towards the past.
\par\smallskip

Still with ${\mathcal O}=P_R^+$, let us now assume that the vector $\bm \beta$ is space-like.
We choose an admissible basis $({\bf e}_x,{\bf e}_y,{\bf e}_z)$ of $\F$ such that
${\bf e}_x$ and ${\bm\beta}$ are parallel. We therefore have ${\bm\beta}=\beta{\bf e}_x$,
with $\beta\in\RR$, $\beta\neq 0$. With $(z,\varphi)$ as coordinate system, the above integral
$(*)$ is expressed as
 $$\int_0^{2\pi}\left(\int_R^{+\infty}\exp\left(-\beta \cos\varphi\sqrt{z^2-R^2}\right)\d z\right)\d\varphi\,.
 $$
This integral is always divergent, as well when $\beta<0$ as when $\beta>0$, since
$-\beta\cos\varphi>0$ for many values of $\varphi$, using the fact that for $z>0$ large enough 
$\sqrt{z^2-R^2}\equiv z$.
\par\smallskip

The subset $\Omega$ of $\F\equiv{\mathfrak g}$ of generalized temperatures
contains all time-like vectors in $\F$ directed towards the past, no time-like vector 
directed towards the future and no space-like vector. Since it is open, it cannot 
contain the origin of $\F$, nor light-like vectors. Therefore $\Omega$ is exactly
the subset of $\F$ made of time-like vectors directed towards the past.
The partition function $P$ and the probability density $\rho_{\bm\beta}$ associated
to a time-like vector $\bm\beta$ directed towards the past are expressed as
 \begin{align*}
  P(\bm\beta)
   &=\frac{2\pi}{\Vert{\bm\beta}\Vert}\exp\bigl(-\Vert{\bm\beta}\Vert R\bigr)\,,
      \quad {\bm\beta}\in\F\,,\ {\bm\beta}\ \hbox{time-like directed towards the past}\,,\\
      \rho_{\bm\beta}({\bf r})
   &=\frac{\Vert{\bm\beta}\Vert\exp\Bigl(-\Vert{\bm\beta}\Vert \bigl(z({\bf r})-R\bigr)\Bigr)}
      {2\pi}\,,\quad {\bf r}\in P_R^+\,, 
 \end{align*}
where we have set
$\Vert{\bm\beta}\Vert=\sqrt{-{\bm\beta}\cdot{\bm\beta}}$,
since ${\bm\beta}\cdot{\bm\beta}<0$.
\par\smallskip

The thermodynamic functions mean value of $J$ and entropy are
 \begin{align*}
 E_J({\bm\beta})
 &=-\frac{1+R\Vert{\bm \beta}\Vert}{\Vert{\bm\beta}\Vert^2}\,{\bm\beta}\,,\\
 S({\bm\beta})
 &=1+\log\frac{2\pi}{\Vert{\bm\beta}\Vert}\,.
 \end{align*}
\par\smallskip

Similarly, one can prove that on the pseudo-sphere $P_R^-$, the open subset $\Omega$ of
generalized temperatures is the subset of $\F$ made of time-like vectors directed towards the future, and
that the probability density of    Gibbs states and the corresponding thermodynamic functions 
are given by the same formulae as those indicated above,
of course with the appropriate sign changes. 
\par\smallskip

By similar calculations, one can prove that on the other two-dimensional coadjoint 
orbits of $G$ denoted by $H_R$, $C^+$ and $C^-$, there are no    Gibbs states,
since on these orbits the subset $\Omega$ of generalized temperatures is empty.
\end{enonce}

\begin{enonce}[definition]{   Gibb states on the Poincaré disk}
\label{subsub:4.2.3} 
Assumptions and notations here are still those of \ref{subsec:4.1}, with
$\zeta =-1$. The choice of any admissible basis $({\bf e}_x,{\bf e}_y,{\bf e}_z)$
of $\F$ determines, for each $R>0$, a diffeomorphism $\psi_R$ of the pseudo-sphere $P_R^+$
onto the Poincaré disk $D_P$, subset of the complex plane $\CC$ whose elements $w$ satisfy $\Vert w\Vert<1$.  
Its expression is
 $$\psi_R({\bf r})=\frac{x+iy}{R+\sqrt{R^2+x^2+y^2}}\,,\quad
    {\bf r}=x{\bf e}_x+y{\bf e}_y+\sqrt{R^2+x^2+y^2}{\bf e}_z\in P_R^+\,.
 $$
It is composed \cite{Marle2019} of the stereographic projection of the 
pseudo-sphere $P_R^+$ on the two-dimensional vector subspace of $\F$ generated by 
${\bf e}_x$ and ${\bf e}_y$,
 $${\bf r}=x{\bf e}_x+y{\bf e}_y+\sqrt{R^2+x^2+y^2}{\bf e}_z\mapsto
   \frac{R}{R+\sqrt{R^2+x^2+y^2}}(x{\bf e}_x+y{\bf e}_y)
 $$
with the map
 $$(u{\bf e}_x+v{\bf e}_y)\mapsto w=\frac{u+iv}{R}\,.
 $$
The expression of its inverse $\psi_R^{-1}:D_p\to P_R^+$ is
 $$\psi_R^{-1}(w)=\frac{R}{1-\vert w\vert^2}\bigl(
  2(w_{\rm r}{\bf e}_x+w_{\rm im}{\bf e_y})+(1+\vert w\vert^2){\bf e}_z\bigr)\,,
 $$
where $w=w_{\rm r}+iw_{\rm im}\in D_P$, $\vert w\vert^2=w_{\rm r}^2+w_{\rm im}^2<1$. 
\par\smallskip

The pseudo-sphere $P_R^+$ is endowed both with the Riemannian metric induced by that of $\F$,
and with its symplectic form of coadjoint orbit of the Lie group $G\equiv\SO(2,1)$
($\F$ being identified with the dual vector space ${\mathfrak g}^*$ of the Lie algebra 
${\mathfrak g}\equiv\so(2,1)$). The Poincaré disk $D_P$ is therefore endowed with a Riemannian
metric $\d {s_{D_P}}^2$ and with a symplectic form $\omega_{D_P}$ for which the map
$\psi_R$ is both an isometry and a symplectomorphism. Their expressions are
 \begin{align*}
  \d {s_{D_P}}^2(w)&=\frac{4R^2}{(1-\vert w\vert^2)^2}\,\d w\d \overline w
                     =\frac{4R^2}{(1-\vert w\vert^2)^2}(\d w_{\rm r}^2+\d w_{\rm im}^2)\,,\\
    \omega_{D_P}(w)&=\frac{2iR}{(1-\vert w\vert^2)^2}\d w\wedge\d\overline w
                     =\frac{4R}{(1-\vert w\vert^2)^2}\d w_{\rm r}\wedge\d w_{\rm im}\,,
 \end{align*}
$w_{\rm r}$ and $w_{\rm im}$ being the real and the imaginary parts of $w=w_{\rm r}+iw_{\rm im}$, respectively.
In this expression, the choice of the real number $R>0$ plays the part of the choice of a unit of length
on the Poincaré disk $D_P$.
\par\smallskip

On the open dense subset of $D_P$ complementary to the singleton $\{0\}$, the 
polar coordinate $\varphi$ such that
 $$w_{\rm r}=\vert w\vert\cos\varphi\,,\quad w_{\rm im}=\vert w\vert\sin\varphi\,,
       \quad\hbox{with}\ \vert w\vert=\sqrt{w_{\rm r}^2+w_{\rm im}^2}\,,
 $$
can be locally defined. Then we can write
 $$\d w_{\rm r}\wedge\d w_{\rm im}=\vert w\vert \d\vert w\vert\wedge\d\varphi\,,
 $$
so $\omega_{D_P}(w)$ can be expressed as
 $$\omega_{D_P}(w)=\frac{4R\vert w\vert}{(1-\vert w\vert^2)^2}\d\vert w\vert\wedge\d \varphi
    =\d\left(\frac{2R}{1-\vert w\vert^2}\right)\wedge\d\varphi\,.\,.
 $$
On this open dense subset of $D_P$, the Liouville measure $\lambda_{\omega_{D_P}}$, 
expressed in terms of the local polar coordinates $(\vert w\vert, \varphi)$, is therefore
 $$\lambda_{\omega_{D_P}}(\d w)
   =\frac{4R\vert w\vert}{(1-\vert w\vert^2)^2}\d\vert w\vert\d \varphi\,.
 $$
Moreover, the Lie group $G\equiv\SO(2,1)$ acts on the symplectic manifold $(D_P,\omega_{D_P})$ 
by a Hamiltonian action for which the diffeomorphism $\psi_R$ is equivariant, 
$P_R^+$ being identified with a coadjoint orbit on which $G$ acts by its coadjoint action on the left.
The moment map $J_{D_P}$ of this Hamiltonian action of $G$ on the Poincaré disk is 
the inverse $\psi_R^{-1}$ of $\psi_R$, whose expression is indicated above.
\par\smallskip

In $P_R^+$, the open subset $\Omega$ of generalized temperatures, 
determined in \ref{subsub:4.2.2}, is the set of time-like vectors directed towards the past. 
Let $\bm\beta$ be one of its elements. Let us choose the admissible basis
$({\bf e}_x,{\bf e}_y,{\bf e}_z)$ such that ${\bm\beta}=\beta{\bf e}_z$, with $\beta\in\RR$,
$\beta<0$. On the Poincaré disk $D_P$, the probability density of the Gibbs state indexed by 
$\bm \beta$, with respect to the Liouville measure $\lambda_{\omega_{D_P}}$, 
is deduced from $\rho_{\bm\beta}({\bf r})$ by replacing $z({\bf r})$ by its 
expression in terms of $w$, deduced from the expression of $\psi_R^{-1}$~: 
 $$z({\bf r})=\frac{R(1+\vert w\vert^2)}{1-\vert w\vert^2}\,,\quad
   \hbox{therefore}\quad z({\bf r})-R=\frac{2R}{1-\vert w\vert^2}\,.
 $$

 $$\rho_{\bm\beta}({\bf r})\d z\d{\varphi}
  =\frac{2R\vert\beta\vert\vert w\vert}{\pi(1-\vert w\vert^2)^2}
    \exp\left(-\frac{2R\vert\beta\vert}{1-\vert w\vert^2}\right)
     \d\vert w\vert\d\varphi\,,\quad {\bf r}\in P_R^+\,.
 $$
The probability density of the Gibbs state indexed by $\bm \beta$ on the Poincaré disk $D_P$, with respect to the
measure $\d\vert w\vert\d\varphi$ associated to the polar coordinate system $(\vert w\vert,\varphi)$, is therefore
 $$\frac{2R\vert\beta\vert\vert w\vert}{\pi(1-\vert w\vert^2)^2}
                                           \exp\left(-\frac{2R\vert\beta\vert}{1-\vert w\vert^2}\right)\,.
 $$
Using the expression of $\lambda_{\omega_{D_P}}$ indicated above,
we see that the probability density $\rho_{\bm\beta}$ of this Gibbs state, 
with respect to the Liouville measure $\lambda_{\omega_{D_P}}$, is
 $$\rho_{\bm\beta}(w)=\frac{\vert\beta\vert}{2\pi}
                       \exp\left(-\frac{2R\vert\beta\vert}{1-\vert w\vert^2}
                        \right)\,,\quad\hbox{with}\ w\in D_P\,.
 $$
The associated    thermodynamic functions (mean value of the moment map $E_J$ and entropy $S$) are
the functions of the generalized temperature $\bm\beta$ whose expressions are given in \ref{subsub:4.2.2}.
\par\smallskip

Instead of the Lie group $G\equiv\SO(2,1)$, the Lie group $\SU(1,1)$ is very often 
used as group of symmetries of the Poincaré disk $D_P$. It is the group of
complex $2\times 2$ matrices wich can be written as
 $$A=\begin{pmatrix}
      a&b\cr
          \overline{\mathstrut b}&\overline{\mathstrut a}
     \end{pmatrix}\,,\quad
      \hbox{with}\ a\ \hbox{and}\ b\in \CC\,,\ 
       \vert a\vert^2-\vert b\vert^2=a\overline{\mathstrut a}-b\overline{\mathstrut b}=1\,.
        \eqno(*)
 $$ 
This group acts on the Poincaré disk $D_P$ by \emph{Möbius transformations}, 
so called in honour of the German mathematician August Ferdinand Möbius
(1790--1868). We recall that the Möbius transformation determined by a complex 
$2\times 2$ matrix $\displaystyle A=\begin{pmatrix}
      a&b\cr
          c&d
     \end{pmatrix}$, with $a$, $b$, $c$ and $d\in \CC$ satisfying $ad-bc\neq 0$,
is the map $U_A:\widehat\CC\to \widehat\CC$,
with $\widehat\CC=\CC\cup\{\infty\}$, 
 $$U_A(w)=\begin{cases}{\displaystyle \frac{aw+b}{cw+d}}&\text{if $w\in \CC$ and $cw+d\neq 0$},\cr
                        {\displaystyle\infty}&\text{if $w\in \CC$ and $cw+d=0$},\cr
                        {\displaystyle \frac{a}{c}}&\text{if $w=\infty$ and $c\neq 0$},\cr
                        {\displaystyle\infty}&\text{if $w=\infty$ and $c=0$}.
          \end{cases}
 $$ 
The Möbius transformations $U_A$ and $U_{A'}$ determined by the two matrices $A$ 
and $A'$ are equal if and only if $A'=\lambda A$ for some $\lambda\in\CC$, 
$\lambda\neq 0$. When $A$ and $A'\in\SU(1,1)$, 
$U_{A'}=U_A$ if and only if $A'=\pm A$. The Möbius transformation $U_A$, determined by $A\in\SU(1,1)$, 
restricted to the Poincaré disk $D_P$, is a diffeomorphism of $D_P$ onto itself, 
and the map $\SU(1,1)\times D_P\to D_P$ so defined
is a holomorphic left action of $\SU(1,1)$ on $D_P$. There exists a surjective
Lie groups homomorphism $\Phi$ of $\SU(1,1)$ onto $\SO(2,1)$ whose kernel is the discrete group
$\{1,-1\}$ (where $1$ stands for the unit $2\times2$ matrix and $-1$ for the opposite matrix). 
For each complex $2\times2$ matrix $A\in\SU(1,1)$, expressed as indicated by the formulae $(*)$
above, $\Phi(A)$ is the real $3\times3$ matrix (see for example \cite{Marle2019})
$$\Phi(A)=\begin{pmatrix}
   {\displaystyle \frac{a^2+\overline{\mathstrut  a}^2+(b^2+\overline{\mathstrut  b}^2)}{2}} & 
   -{\displaystyle \frac{a^2-\overline{\mathstrut  a}^2-(b^2-\overline{\mathstrut  b}^2)}{2 i}} &
   -(ab+\overline{\mathstrut  a}\overline{\mathstrut  b})\cr
   {\displaystyle \frac{a^2-\overline{\mathstrut  a}^2+(b^2-\overline{\mathstrut  b}^2)}{2 i}} &
   {\displaystyle \frac{a^2+\overline{\mathstrut  a}^2-(b^2+\overline{\mathstrut  b}^2)}{2}} &
    -{\displaystyle \frac{ab-\overline{\mathstrut  a}\overline{\mathstrut  b}}{ i}} \cr
    -(a\overline{\mathstrut  b}+\overline{\mathstrut  a} b) &
    {\displaystyle \frac{-(\overline{\mathstrut  a} b-a \overline{\mathstrut  b})}{i}} &
    (a\overline{\mathstrut  a}+b\overline{\mathstrut  b})
   \end{pmatrix}\,.\eqno(**)
$$ 
The Lie algebras of the Lie groups $\SU(1,1)$ and $\SO(2,1)$ are therefore isomorphic. 
They can both be identified with the vector space $\F$, as well as their dual vector spaces.
The action of $\SU(1,1)$ on the Poincaré disk $D_P$ by Möbius transformations
can therefore be identified with the Hamiltonian action of $G\equiv\SO(2,1)$ discussed above, 
and admits $J_{D_P}$ as moment map.
\end{enonce}

\begin{enonce}[definition]{Gibbs states on the Poincaré half-plane}
\label{subsub:4.2.4}
The Möbius transformation $U_M$ determined by the matrix 
$\displaystyle M=\begin{pmatrix}-i&i\cr 1&1\end{pmatrix}$, restricted to the 
Poincaré disk $D_P$, is the map
 $$w\mapsto\xi=U_M(w)=\frac{i(-w+1)}{w+1}\,,\quad w\in D_P=\{w\in\CC\,|\,\vert w\vert<1\}\,.
 $$
Its image is the half plane 
 $$\Pi_P=\{\xi=\xi_{\rm r}+i\xi_{\rm im}\in\CC\,|\,\xi_{\rm im}>0\}\,,
 $$ 
where $\xi_{\rm r}$ and $\xi_{\rm im}$ are respectively the real and the imaginary parts
of the complex number $\xi$. Endowed with the Riemannian metric and the symplectic form
for which $U_M:D_P\to \Pi_P$ is both an isometry and a symplectomorphism, $\Pi_P$
is called the \emph{Poincaré half-plane}. The expressions of its Riemannian metric
$\d{s_{\Pi_P}}^2$ and of its symplectic form $\omega_{\Pi_P}$ are
 \begin{align*}
  \d{s_{\Pi_P}}^2(\xi)&=\frac{R^2}{\xi_{\rm im}^2}(\d \xi_{\rm r}^2+\d \xi_{\rm im}^2)\,,\\
   \omega_{\Pi_P}(\xi)&=\frac{R}{\xi_{\rm im}^2}\d\xi_{\rm r}\wedge\d\xi_{\rm im}
                       =\d\left(\frac{R}{\xi_{\rm im}}\right)\wedge\d\xi_{\rm r}\,.
 \end{align*}
The Liouville measure $\lambda_{\omega_{\Pi_P}}$ is therefore
 $$\lambda_{\omega_{\Pi_P}}(\d\xi)=\frac{R}{\xi_{\rm im}^2}\d\xi_{\rm r}\d\xi_{\rm im}\,.
 $$
A matrix
 $$A=\begin{pmatrix}
      a&b\cr
          \overline{\mathstrut b}&\overline{\mathstrut a}
     \end{pmatrix}\in\SU(1,1) \,,\quad
      \hbox{with}\ a\ \hbox{and}\ b\in \CC\,,\ 
       \vert a\vert^2-\vert b\vert^2=a\overline{\mathstrut a}-b\overline{\mathstrut b}=1\,,
 $$  
acts on the Poincaré disk $D_P$ by the Möbius transformation $U_A$. Since the Möbius tansformation 
$U_M$ determined by the matrix $M$, restricted to $D_P$, is a diffeomorphim of $D_P$
onto the Poincaré half-plane $\Pi_P$, the corresponding action of $A$ on $\Pi_P$
is the Möbius transformation determined by the matrix
 \begin{align*}
 M A M^{-1}&=\begin{pmatrix}-i&i\cr 1&1
             \end{pmatrix}
             \begin{pmatrix}a&b\cr 
             \overline{\mathstrut  b}&\overline{\mathstrut  a}
             \end{pmatrix}
             \begin{pmatrix}i/2&1/2\cr -i/2&1/2
             \end{pmatrix}\\
           &=\begin{pmatrix}
              a+\overline{\mathstrut a}-b-\overline{\mathstrut b}&
                             -i(a-\overline{\mathstrut a}+b-\overline{\mathstrut b})\cr
                             i(a-\overline{\mathstrut a}-b+\overline{\mathstrut b})&
                             a+\overline{\mathstrut a}+b+\overline{\mathstrut b}
             \end{pmatrix}\\
           &=2\begin{pmatrix}
              a_{\rm r}-b_{\rm r}&
                             a_{\rm im}+b_{\rm im}\cr
                             -a_{\rm im}+b_{\rm im}&
                             a_{\rm r}+b_{\rm r}
             \end{pmatrix}\,.                                                
 \end{align*}
The Möbius transformations determined by $MAM^{-1}$ and by $(1/2)MAM^{-1}$ being equal,
we are led to consider the map $\Sigma$, defined on $\SU(1,1)$, taking its values in the
set of real $2\times 2$ matrices, which associates to each matrix $A\in \SU(1,1)$ the matrix
 $$\Sigma(A)=\begin{pmatrix}
              a_{\rm r}-b_{\rm r}&
                             a_{\rm im}+b_{\rm im}\cr
                             -a_{\rm im}+b_{\rm im}&
                             a_{\rm r}+b_{\rm r}
             \end{pmatrix}\,,\quad\hbox{with}\ 
          A=\begin{pmatrix}
      a&b\cr
          \overline{\mathstrut b}&\overline{\mathstrut a}
     \end{pmatrix}\in\SU(1,1) \,.
 $$  
Observing that $\det\bigl(\Sigma(A)\bigr)=\vert a\vert^2-\vert b\vert^2=1$, we see that the map
$\Sigma$ is a Lie groups isomorphism of $\SU(1,1)$ onto $\SL(2,\RR)$. The Lie group 
$\SL(2,\RR)$ therefore acts on the Poincaré half-plane $\Pi_P$ by a Hamiltonian action.
As for the action of $\SU(1,1)$ on the Poincaré disk $D_P$, the dual vector space of
the Lie algebra $\sl(2,\RR)$ can be identified with the vector space $\F$. With this identification,
the expression of the moment map of the Hamiltonian action of $\SL(2,\RR)$ on the Poincaré half-plane
$\Pi_P$ is
 $$J_{\Pi_P}(\xi)=\frac{R}{2\xi_{\rm im}}\,\bigl((1-\vert \xi\vert^2){\bf e}_x
                                          +2\xi_{\rm r}{\bf e}_y
                                          +(1+\vert\xi\vert^2){\bf e}_z\bigr)\,.
 $$
The set $\Omega$ of generalized temperatures for the Hamiltonian action of $\SL(2,\RR)$
on the Poincaré half-plane $\Pi_R$ is, as for the action of $\SU(1,1)$ on the Poincaré disk
$D_P$, the set of time-like elements in $\F\equiv\sl(2,\RR)$ directed towards the past.
Let $\bm\beta$ be one of its elements. We choose an admissible basis
$({\bf e}_x,{\bf e}_y,{\bf e}_z)$ of $\F$ such that ${\bm\beta}=\beta{\bf e}_z$, with $\beta<0$.
Proceeding as for the Poincaré disk, we see that the probability density $\rho_{\bm\beta}$ 
of the    Gibbs state on $\Pi_P$ indexed by $\bm\beta$, with respect to the measure
$\d\xi_{\rm r}\d\xi_{\rm im}$, is, when expressed with the coordinate system $(\xi_{\rm r},\xi_{\rm im})$,

The set $\Omega$ of generalized temperatures, for the Hamiltonian action $\Psi$,
is the set of time-like elements in ${\bf F}\equiv\sl(2,{\mathbb R})$ 
directed towards the past. Let $\bm\beta$ be one of its elements. We choose an admissible basis
$({\bf e}_x,{\bf e}_y,{\bf e}_z)$ of ${\bf F}$ such that ${\bm\beta}=\beta{\bf e}_z$, with $\beta<0$.
The probability density $\rho_{\bm\beta}$ 
of the Gibbs state on $\Pi_P$ indexed by $\bm\beta$, with respect to the Liouville 
measure $\lambda_{\omega_{\Pi_P}}$, is
 $$\rho_{\bm\beta}(\xi)
   =\frac{\vert\beta\vert}{2\pi}\exp\left(-\frac{R\vert\beta\vert
                                                \bigl((1+\xi_{\rm im})^2+\xi_{\rm r}^2\bigr)}
                                                 {2\xi_{\rm im}}\right)\,,
    \quad \xi=\xi_{\rm r}+i\xi_{\rm im}\in\Pi_P\,.
 $$ 
The associated thermodynamic functions (mean value of the moment map $E_J$ and entropy $S$) are
the functions of the generalized temperature $\bm\beta$ whose expressions are given in \ref{subsub:4.2.2}.
\end{enonce}

\begin{enonce}[definition]{There is no    Gibbs state on a two-dimensional symplectic vector space}
\label{subsub:4.2.5}
We consider the plane $\RR^2$ (coordinates $u$, $v$), endowed with the symplectic form $\omega=\d u\wedge\d v$.
The symplectic group $\Sp(\RR^2,\omega)$ is the group $\SL(2,\RR)$ of real $2\times2$ matrices with determinant
$1$. As seen in \ref{subsub:4.2.4}, its Lie algebra, as well as its dual vector space, can be identified with
the vector space $\F$, once an admissible basis $({\bf e}_x,{\bf e}_y,{\bf e}_z)$ of $\F$ is chosen. 
The infinitesimal generators of the action of $\SL(2,\RR)$ on $\RR^2$ are the three Hamiltonian vector fields
 \begin{align*}
  X_{\RR^2}(u,v)&=\frac{1}{2}\left(v\frac{\partial}{\partial u}+u\frac{\partial}{\partial v}\right)\,,\ \hbox{whose Hamiltonian is}\ 
           H_{X_{\RR^2}}(u,v)=\frac{u^2-v^2}{4}\,,\\ 
  Y_{\RR^2}(u,v)&=\frac{1}{2}\left(u\frac{\partial}{\partial u}-v\frac{\partial}{\partial v}\right)\,,\ \hbox{whose Hamiltonian is}\ 
           H_{Y_{\RR^2}}=-\frac{uv}{2}\,,\\ 
  Z_{\RR^2}(u,v)&=\frac{1}{2}\left(v\frac{\partial}{\partial u}-u\frac{\partial}{\partial v}\right)\,,\ \hbox{whose Hamiltonian is}\ 
           H_{Z_{\RR^2}}=-\frac{u^2+v^2}{4}\,.
 \end{align*}
The infinitesimal generators $X_{\RR^2}$, $Y_{\RR^2}$ and $Z_{\RR^2}$ are the 
images, by the action on $\RR^2$ of the Lie algebra $\sl(2,\RR)\equiv\F$, of 
${\bf e}_x$, ${\bf e}_y$ and ${\bf e}_z$, respectively. We therefore obtain a 
moment map $J_{\RR^2}:\RR^2\to \F\equiv\sl(2,\RR)^*$ of this action
by writing $\bigl\langle J_{\RR^2}(u,v),{\bf e}_x\bigr\rangle=H_{X_{\RR^2}}(u,v)$, 
           $\bigl\langle J_{\RR^2}(u,v),{\bf e}_y\bigr\rangle=H_{Y_{\RR^2}}(u,v)$,  
           $\bigl\langle J_{\RR^2}(u,v),{\bf e}_z\bigr\rangle=H_{X_{\RR^2}}(u,v)$.
So we have
 $$J_{\RR^2}(u,v)=H_{X_{\RR^2}}(u,v){\bm\varepsilon}_x
                 +H_{Y_{\RR^2}}(u,v){\bm\varepsilon}_y
                 +H_{Z_{\RR^2}}(u,v){\bm\varepsilon}_z\,,
 $$
where $({\bm\varepsilon}_x,{\bm\varepsilon}_y,{\bm\varepsilon}_z)$ is the basis of
$\F^*$ dual of the basis $({\bf e}_x,{\bf e}_y,{\bf e}_z)$ of $\F$. With the identification of
$\F$ with its dual $\F^*$ by means of the scalar product on $\F$ of  signature
$(+,+,-)$, we have ${\bm\varepsilon}_x={\bf e}_x$,
${\bm\varepsilon}_y={\bf e}_y$, ${\bm\varepsilon}_z=-{\bf e}_z$. Therefore
 \begin{align*}
  J_{\RR^2}(u,v)
  &=H_{X_{\RR^2}}(u,v){\bf e}_x+H_{Y_{\RR^2}}(u,v){\bf e}_y-H_{Z_{\RR^2}}(u,v){\bf e}_z\\
  &=\frac{u^2-v^2}{4}{\bf e}_x-\frac{uv}{2}{\bf e}_y+\frac{u^2+v^2}{4}{\bf e}_z\,.
 \end{align*}
By observing that
 $$\left(\frac{u^2-v^2}{4}\right)^2+\left(\frac{uv}{2}\right)^2-\left(\frac{u^2+v^2}{4}\right)^2=0\quad
    \hbox{and}\quad \frac{u^2+v^2}{4}\geq 0\,,
 $$
we see that the $J_{\RR^2}(\RR^2)$ is the union of two coadjoint orbits of $\SL(2,\RR)$: a zero-dimensional
orbit, the singleton $\{0\}$ (where $0$ stands for the origin of $\F$), and a two-dimensional orbit,
the cone $C^+$ of light-like elements in $\F$ directed towards the future. We have seen above 
(\ref{subsub:4.2.2}) that no    Gibbs state can exist on $C^+$. Therefore
no    Gibbs state can exist on a two-dimensional symplectic vector space, 
for the natural action of the linear symplectic group.
\end{enonce}

\begin{enonce}[definition]{The    Gibbs states and thermodynamic 
functions on an affine Euclidean and symplectic plane for the group of its displacements}
\label{subsub:4.2.6}
As in the preseding section, we consider the plane $\RR^2$ (coordinates $u$, $v$)
endowed with the symplectic form $\omega=\d u\wedge \d v$. Moreover we endow it with its usual
Euclidean metric, and consider the action of its group of displacements (rotations and translations),
denoted by ${\rm E}(2,\RR)$. In matrix notations, an element of $\RR^2$ of coordinates
$(u,v)$ is represented by the column vector
$\displaystyle\begin{pmatrix}u\cr v\cr 1\end{pmatrix}$ and an element $g_{(\varphi,x,y)}$ 
of ${\rm E}(2,\RR)$ by a matrix 
$\displaystyle\begin{pmatrix} \cos\varphi&-\sin\varphi&x\cr
                              \sin\varphi&\cos\varphi&y\cr
                                0&0&1\end{pmatrix}$
depending on the three real parameters $\varphi$, $x$ and $y$. The action of ${\rm E}(2,\RR)$
on $\RR^2$ is expressed as the product of matrices
 $$\begin{pmatrix} \cos\varphi&-\sin\varphi&x\cr
                              \sin\varphi&\cos\varphi&y\cr
                                0&0&1\end{pmatrix}
  \begin{pmatrix}u\cr v\cr 1\end{pmatrix}
  =\begin{pmatrix}u\cos\varphi-v\sin\varphi+x\cr u\sin\varphi+v\cos\varphi+ y\cr 1\end{pmatrix}\,.
 $$
We denote by $({\bf e}_r,{\bf e}_x,{\bf e}_y)$ the basis of the Lie algebra ${\mathfrak e}(2,\RR)$
whose elements, identified with the corresponding matrices, are
 $${\bf e}_r=\begin{pmatrix}0&-1&0\cr 1&0&0\cr 0&0&0\end{pmatrix}\,,\quad
   {\bf e}_x=\begin{pmatrix}0&0&1\cr 0&0&0\cr 0&0&0\end{pmatrix}\,,\quad
   {\bf e}_y=\begin{pmatrix}0&0&0\cr 0&0&1\cr 0&0&0\end{pmatrix}\,.
$$ 
The corresponding fundamental vector fields on $\RR^2$ are the Hamilonian vector 
fields, generators of the action of ${\rm E}(2,\RR)$, 
\begin{align*}
 ({\bf e}_r)_{\RR^2}(u,v)&=-v\frac{\partial}{\partial u}+u\frac{\partial}{\partial v}\,,& \hbox{whose Hamiltonian is}\ 
           H_{({\bf e}_r)_{\RR^2}}(u,v)&=\frac{u^2+v^2}{2}\,,\\ 
  ({\bf e}_x)_{\RR^2}(u,v)&=\frac{\partial}{\partial u}\,,& \hbox{whose Hamiltonian is}\ 
           H_{({\bf e}_x)_{\RR^2}}(u,v)&=-v\,,\\ 
  ({\bf e}_y)_{\RR^2}(u,v)&=\frac{\partial}{\partial v}\,,& \hbox{whose Hamiltonian is}\ 
           H_{({\bf e}_y)_{\RR^2}}(u,v)&=u\,.
\end{align*}
Proceeding as in \ref{subsub:4.2.5}, we obtain the expression of the moment map
 $$J_{\RR^2}(u,v)=\frac{u^2+v^2}{2}{\bm\varepsilon}_r-y{\bm\varepsilon}_x+x{\bm\varepsilon}_y\,,
 $$
where $({\bm\varepsilon}_r,{\bm\varepsilon}_x,{\bm\varepsilon}_y)$ is the basis
of ${\mathfrak e}(2,\RR)^*$ dual of the basis $({\bf e}_r,{\bf e}_x,{\bf e}_y)$ of   
${\mathfrak e}(2,\RR)$. An element ${\bm\beta}=\beta_r{\bf e}_r+\beta_x{\bf e}_x+\beta_y{\bf e}_y$
in ${\mathfrak e}(2,\RR)$ is a generalized temperature if, considered as a function 
of $\beta_r$, $\beta_x$ and $\beta_y$, the integral
 $$\int_{\RR^2}\exp\bigl(-\langle J(u,v),{\bm\beta}\rangle\bigr)\lambda_\omega
  =\int_{\RR^2}\exp\left(-\frac{u^2+v^2}{2}\beta_r+v\beta_x-u\beta_y\right)\d u\d v
    \eqno(*)
 $$
is normally convergent. Clearly, a necessary condition for the normal convergence 
of this integral is
 $$\beta_r>0\,.\eqno(**)
 $$
When this condition is satisfied, we can write 
 $$-\frac{u^2+v^2}{2}\beta_r+v\beta_x-u\beta_y=\frac{\beta_x^2+\beta_y^2}{2\beta_r}
    -\frac{\beta_r}{2}\left[\left(u+\frac{\beta_y}{\beta_r}\right)^2
                            +\left(v-\frac{\beta_x}{\beta_r}\right)^2\right]\,.
 $$
By using on the plane $\RR^2$, instead of $(u,v)$, the polar coordinates $(\rho,\psi)$,
determined by
 $$u'=u+\frac{\beta_y}{\beta_r}=\rho\cos\psi\,,\quad v'=v-\frac{\beta_x}{\beta_r}=\rho\sin\psi\,,
 $$
we see that when $(**)$ is satisfied, the integral $(*)$ above is normally convergent.
Condition $(**)$ is therefore both necessary and sufficient for the normal convergence of
$(*)$. The set $\Omega$ of generalized temperatures, for the action of
${\rm E}(2,\RR)$ on $(\RR^2,\omega)$, is made of elements
${\bm \beta}=\beta_r{\bf e}_r+\beta_x{\bf e}_x+\beta_y{\bf e}_y\in{\mathfrak e}(2,\RR)$ 
which satisfy Condition $(**)$ above. The expression of the partition function $P$ is then
 \begin{align*}
  P({\bm\beta})
  &=\exp\left(\frac{\beta_x^2+\beta_y^2}{2\beta_r}\right)\int_0^{2\pi}
     \left(\int_0^{+\infty}\exp\left(-\frac{\beta_r\rho^2}{2}\right)\rho\d \rho\right)\d\psi\\
  &=\frac{\pi(\beta_x^2+\beta_y^2)}{\beta_r^2}\,,\quad{\bm\beta}
   =\beta_r{\bf e}_r+\beta_x{\bf e}_x+\beta_y{\bf e}_y\in{\mathfrak e}(2,\RR)\,.                
 \end{align*}    
The expression of the probability density $\rho_{\bm\beta}$ of the    Gibbs state
indexed by ${\bm\beta}\in{\mathfrak e}(2,\RR)$, with respect to the Liouville measure $\d u\d v$, is
 $$\rho_{\bm\beta}(u,v)=\exp\left(\frac{\beta_x^2+\beta_y^2}{2\beta_r}\right)
    \exp\left(-\frac{\beta_r({u'}^2+{v'}^2)}{2}\right)\,.
$$
The expressions of the    thermodynamic functions $E_J({\bm\beta})$ 
(mean value of the moment map) and $S$ (entropy) are
 \begin{align*}
  E_J({\bm\beta})
    &= \frac{2}{\beta_r}{\bm\varepsilon}_r-\frac{2\beta_x}{\beta_x^2+\beta_y^2}{\bm\varepsilon}_x
                                           -\frac{2\beta_y}{\beta_x^2+\beta_y^2}{\bm\varepsilon}_y\,,\\
  S({\bm\beta})
    &=\log P({\bm\beta})=\log\pi+\log(\beta_x^2+\beta_y^2)-\log(\beta_r^2)\,.
 \end{align*}
\par\smallskip

The expressions of the symplectic Lie group cocycle $\theta:{\rm E}(2,\RR)\to{\mathfrak e}(2,\RR)^*$
and of the symplectic Lie algebra cocycle $\Theta:{\mathfrak e}(2,\RR)\times{\mathfrak e}(2,\RR)\to\RR$
associated to the moment map $J_{\RR^2}$ can be determined by using the formulae
 $$\theta(g)=J\circ\Phi_g-\Ad^*_{g^{-1}}\circ J\,,\quad \Theta(X,Y)=\bigl\langle T_e\theta(X),Y\bigr\rangle\,,
 $$
where $g\in{\rm E}(2,\RR)$, $X$ and $Y\in{\mathfrak e}(2,\RR)$, $\Phi_g:\RR^2\to\RR^2$ being
the affine isometry of $\RR^2$ determined by the action of $g\in{\rm E}(2,\RR)$.  
Although they are not necessary for the determination of   
Gibbs states, they are indicated below.
 \begin{gather*}
  \theta(g_{(\varphi,x,y)})
   =\frac{x^2+y^2}{2}{\bm\varepsilon}_r-y{\bm\varepsilon}_x+x{\bm\varepsilon}_y\,,\\
      \Theta(r_1{\bf e}_r+x_1{\bf e}_x+y_1{\bf e}_y,r_2{\bf e}_r+x_2{\bf e}_x+y_2{\bf e}_y)
   =x_1y_2-y_1x_2\,.
 \end{gather*}  
\end{enonce}

\begin{rema}
The fact that generalized temperatures are elements of the Lie algebra ${\mathfrak e}(2,\RR)$
whose component ${\beta}_r$ on ${\bf e}_r$ is stricly positive 
may seem surprising, since there is apparently no reason explaining why 
clockwise and counter-clockwise rotations have different properties. I believe 
that it follows from the choice of $\d u\wedge\d v$ as a symplectic 
form on the plane $\RR^2$, endowed with coordinates $u$ and $v$. This choice
automatically implies the choice of an orientation of this plane: 
the Hamiltonian vector field which admits $(u^2+v^2)/2$ 
as Hamiltonian is indeed the infinitesimal generator of counter-clockwise rotations 
around the origin. Replacing $\d u\wedge\d v$ by 
its opposite $\d v\wedge\d u$ would have as consequence the replacement of this 
vector field by its opposite, which is the infinitesimal generator of clockwise rotations.
\end{rema}

\section{Final comments and thanks}
We have given a few examples of Gibbs states for the Hamiltonian action
of a non-commutative Lie group on a symplectic manifold, even when the considered 
symplectic manifold is non-compact. However, we encoutered too several examples in which
no    Gibbs state can exist, the set of generalized temperatures being empty.
All our examples are relative to two-dimensional symplectic manifolds. It seems interesting to
look now at higher-dimensional symplectic manifolds. 
\par\smallskip

Although I studied Jean-Marie Souriau's book \cite{Souriau1969} in my youth, my 
knowledge of his works in statistical mechanics were rather superficial. I owe 
to Frédéric Barbaresco, who led me to look again at this book more in depth, my 
interest in Gibbs states.
\par\smallskip

Roger Balian and Alain Chenciner were kind enough to look at this work. Their 
numerous helpful remarks and constructive criticisms, their respective helps
to better understand the foundations of quantum statistical mechanics on one hand
and Shannon's paper \cite{Shannon1948} on the other hand, were invaluable.
\par\smallskip

I benefited of very fruitful exchanges of ideas with Géry de Saxcé during our
common work on a paper in preparation.
\par\smallskip

To all of them, and to my colleagues and friends I encountered regularly in various
seminars before the beginning of the present dark period of sanitary confinment, 
I address my warmest thanks.

\backmatter

\end{document}